\newtheorem{theo}{Theorem}[section]
\newtheorem{lemma}[theo]{Lemma}
\newtheorem{defi}[theo]{Definition}
\newtheorem{prop}[theo]{Proposition}
\newtheorem{cor}[theo]{Corollary}
\newtheorem{remark}[theo]{Remark}
\numberwithin{equation}{section}
\def\A{{\mathbb A}}
\def\N{\mathbb{N}}
\def\Z{\mathbb{Z}}
\def\bR{{\mathbf R}}
\def\bL{{\mathbf L}}
\def\pre-tr{\operatorname{pre-tr}}
\def\Hom{\operatorname{Hom}}
\def\End{\operatorname{End}}
\newcommand{\hocolim}{\operatorname{hocolim}}
\newcommand{\QCoh}{\operatorname{QCoh}}
\newcommand{\Coh}{\operatorname{Coh}}
\newcommand{\Cone}{\operatorname{Cone}}
\newcommand{\ev}{\mathrm ev}
\newcommand{\cF}{{\mathcal F}}
\newcommand{\cG}{{\mathcal G}}
\newcommand{\cO}{{\mathcal O}}
\newcommand{\cD}{{\mathcal D}}
\newcommand{\cA}{{\mathcal A}}
\newcommand{\cB}{{\mathcal B}}
\newcommand{\cI}{{\mathcal I}}
\newcommand{\cC}{{\mathcal C}}
\newcommand{\cE}{{\mathcal E}}
\newcommand{\cT}{{\mathcal T}}
\newcommand{\cHom}{\mathcal Hom}
\newcommand{\Perf}{\operatorname{Perf}}
\newcommand{\Mor}{\operatorname{Mor}}
\newcommand{\holim}{\operatorname{holim}}
\newcommand{\Spec}{\operatorname{Spec}\,}
\newcommand{\Ho}{\operatorname{Ho}}
\newcommand{\id}{\operatorname{id}}
\newcommand{\dgalg}{\operatorname{dgalg}}
\newcommand{\dgcat}{\operatorname{dgcat}}
\newcommand{\m}{\mathfrak{m}}
\title[Formal completion of a category along a subcategory]
{Formal completion of a category along a subcategory}
\author{Alexander I. Efimov}
\address{Steklov Mathematical Institute of RAS, Gubkin str. 8, GSP-1, Moscow 119991, Russia}
\address{Independent University of Moscow, Moscow,
Russia} \email{efimov@mccme.ru}
\thanks{The author was partially supported by
the Moebius Contest Foundation for Young Scientists, and RFBR (grant 4713.2010.1).}
\begin{document}

\begin{abstract} Following an idea of Kontsevich, we introduce and study the notion of formal completion of
a compactly generated (by a set of objects) enhanced triangulated category along a full thick essentially small triangulated subcategory.

In particular, we prove (answering   a question of Kontsevich) that using categorical formal completion, one can obtain ordinary formal completions of Noetherian schemes along closed
subschemes. Moreover, we show that Beilinson-Parshin adeles can be also obtained using categorical formal
completion.
\end{abstract}

\maketitle

\tableofcontents

\section{Introduction}

In this paper we introduce and study the notion of formal  completion of a (compactly generated) triangulated category along a (full thick essentially small) triangulated subcategory. The original idea belongs to M.~Kontsevich \cite{Ko1, Ko2}.

Our construction requires DG enhancement \cite{BK} and is built on the notion of derived double centralizer. We illustrate it as follows.

Let $\cA$ be a DG algebra, and $M\in D(\cA)$ be some object
in the derived category of right $\cA$-modules. Put $\cB_M:=\bR\End_{\cA}(M),$ and consider the DG algebra \begin{equation}\widehat{\cA}_M:=\bR\End_{\cB_M^{op}}(M)^{op},\end{equation}
the derived double centralizer of $M.$ We have natural morphism $\cA\to\widehat{\cA}_M.$

It turns out that (quasi-isomorphism class of) $\widehat{\cA}_M$ depends only on the subcategory $\cT\subset D(\cA),$ classically generated by $M$ (this
is special case of Proposition \ref{well-def}, 2)). We define
\begin{equation}\widehat{\cA}_{\cT}:=\widehat{\cA}_M\end{equation}
to be derived double centralizer of $\cT.$
Further, derived category $D(\widehat{\cA}_{\cT})$ depends (up to equivalence) only on the
(enhanced) triangulated category $D(\cA)$ and the full thick triangulated subcategory $\cT\subset D(\cA)$ (this is special case of
Proposition \ref{Morita_invar}). We define
\begin{equation}\widehat{D(\cA)}_{\cT}:=D(\widehat{\cA}_{\cT})\end{equation}
to be the formal completion of $D(\cA)$ along $\cT.$

In Section \ref{definition} we define, more generally, the notion of formal completion $\widehat{\cD}_{\cT}$ of
a compactly generated enhanced triangulated category $\cD$ along full thick essentially small triangulated subcategory $\cT\subset \cD.$ This formal completion comes equipped with "restriction functor" $\kappa^*:\cD\to\widehat{\cD}_{\cT}.$

One of the main results of this paper is the following theorem (see Theorem \ref{main-theo}), which relates our construction with ordinary formal completions of Noetherian schemes. For a separated Noetherian scheme $X,$ we denote by $D(X):=D(\QCoh X)$ the derived category of quasi-coherent sheaves on $X.$

\begin{theo}\label{main-theo1}Let $X$ be a separated Noetherian scheme, and $Y\subset X$ a closed subscheme. Then we have the following commutative diagram:
\begin{equation}
\begin{CD}
D(X) @>\id >> D(X)\\
@VVV                              @V\bL\kappa^* VV\\
\widehat{D(X)}_{D^b_{coh,Y}(X)} @>\cong >> D_{alg}(\widehat{X}_Y).
\end{CD}
\end{equation}\end{theo}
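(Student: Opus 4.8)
The strategy is to reduce the theorem to a concrete computation of a derived double centralizer, using the explicit description of $\widehat{\cD}_{\cT}$ via $\widehat{\cA}_{\cT}$ when $\cD$ is compactly generated by a single object. First I would choose a compact generator: for an affine piece, or more generally by a standard argument for separated Noetherian $X$, there is a compact generator $E$ of $D(X)=D(\QCoh X)$, so $D(X)\simeq D(\cA)$ with $\cA=\bR\End_X(E)$. Next I would pick an object $M$ whose thick closure is exactly $\cT=D^b_{\coh,Y}(X)$: a natural choice is $M=\cO_{nY}$ for a large enough infinitesimal neighborhood, or a Koszul-type complex on a set of local equations for $Y$; one must check that such $M$ classically generates $D^b_{\coh,Y}(X)$ (this is a known result — supports in $Y$, Thomason's classification / Neeman's localization, or a direct Koszul argument). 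Then $\widehat{D(X)}_{D^b_{\coh,Y}(X)}=D(\widehat{\cA}_M)$ with $\widehat{\cA}_M=\bR\End_{\cB_M^{op}}(M)^{op}$, $\cB_M=\bR\End_{\cA}(M)=\bR\End_X(M)$.

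The heart of the argument is identifying this double centralizer with the derived category of the formal scheme $\widehat{X}_Y$. I would argue as follows: $\bR\End_{\cB_M^{op}}(M)$ can be computed as a homotopy limit. Concretely, if $M$ is built from the structure sheaves $\cO_{nY}$ (or a Koszul tower), then $\bR\End_X(\cO_{nY})$ assembles, via the tower $\{\cO_{nY}\}_n$, into an inverse system whose homotopy limit is (derived) $\varprojlim_n \bR\Gamma(X,\cHom(\cO_{nY},\cO_{nY}))$. Since $\cO_{nY}$ is, Zariski-locally on $X=\Spec R$ with $Y=\Spec R/I$, the module $R/I^n$, one has $\bR\End_{R/I^n}(R/I^n)=R/I^n$, and the completed endomorphism algebra is $\varprojlim R/I^n=\widehat{R}$. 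Gluing these local computations (using that $\cHom$ sheafifies and that everything is compatible with the affine charts) yields $\widehat{\cA}_M\simeq \bR\Gamma(\widehat{X}_Y,\cO_{\widehat{X}_Y})$, i.e.\ the (derived) global sections of the structure sheaf of the formal completion, as a sheaf of algebras — and more precisely a sheaf-level statement giving $D(\widehat{\cA}_M)\simeq D_{\alg}(\widehat{X}_Y)$, the "algebraic" (ind-coherent / derived complete) derived category of $\widehat{X}_Y$. The commutativity of the square then amounts to checking that the canonical map $\cA\to\widehat{\cA}_M$ dualizes, under these identifications, to the completion functor $\bL\kappa^*\colon D(X)\to D_{\alg}(\widehat{X}_Y)$; this is essentially formal once both sides are identified, because $\kappa^*$ is defined (in Section \ref{definition}) precisely as restriction of scalars along $\cA\to\widehat{\cA}_M$ at the level of module categories, and the completion functor on schemes is $-\otimes^{\bL}_{\cO_X}\cO_{\widehat{X}_Y}$.

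**Main obstacle.** The delicate point is not any single local computation but the globalization: one must show that the derived double centralizer, which is defined as $\bR\End$ over the (large, opaque) DG algebra $\cB_M^{op}$, actually computes the homotopy limit over the neighborhoods $nY$ — equivalently, that $M$ is a compact generator of $D(\cB_M)$, or at least that $\bR\End_{\cB_M^{op}}(M)$ is insensitive to the difference between $M$ and the pro-object $\{\cO_{nY}\}$. This requires controlling $\bR\Hom$ between objects of $\cT$ and understanding that $\cB_M$ "sees" only the formal neighborhood; concretely one wants $\bR\End_{\cB_M^{op}}(M)\simeq \varprojlim_n \bR\End_X(\cO_{nY})$, which involves a $\varprojlim$-vs-$\varinjlim$ interchange and a Mittag-Leffler / convergence argument on the pro-system, plus verifying that no higher $\varprojlim^1$ terms pollute the answer (here Noetherianity and the Artin–Rees lemma are essential). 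A secondary technical issue is pinning down the precise target category $D_{\alg}(\widehat{X}_Y)$ — i.e.\ proving that $D(\widehat{\cA}_M)$ is equivalent to the category of complexes of $\cO_{\widehat{X}_Y}$-modules (as opposed to derived-complete modules over $\widehat{R}$, which can differ) — and matching this with whatever definition of $D_{\alg}(\widehat{X}_Y)$ is fixed in the body of the paper. I expect this identification, together with the $\varprojlim$-convergence, to occupy the bulk of the proof, while the diagram's commutativity is comparatively routine.
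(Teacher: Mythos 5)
Your high-level intuition — that the derived double centralizer should compute as a homotopy limit over the infinitesimal neighborhoods $Y_n\subset X$ — is exactly the idea driving the paper's proof, and your awareness of the convergence issue ($\varprojlim$-vs-$\varinjlim$ interchange, Grothendieck/local-cohomology input) is on target. However, there is a genuine gap in your setup, and the route through it is different from the paper's.

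\textbf{The gap: a single compact generator of $\cT$.} You propose to choose one object $M$ (e.g.\ $\cO_{nY}$ for $n\gg0$, or a Koszul complex) whose thick closure is $D^b_{\coh,Y}(X)$, and cite Thomason/Hopkins--Neeman. That classification applies to thick subcategories of $\Perf(X)$; it tells you that a Koszul complex classically generates $\Perf_Y(X)$, \emph{not} $D^b_{\coh,Y}(X)$. When $X$ is singular along $Y$ these differ, and in general $D^b_{\coh,Y}(X)$ is \emph{not} classically generated by any single object: already the objects $\cO_{nY}$ for all $n$ sit there, and a fixed $\cO_{nY}$ has no reason to generate $\cO_{mY}$ for $m>n$ by finitely many cones and summands once $X$ is non-regular. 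This is why the paper's definition of $\widehat{\cA}_{\cT}$ is phrased with an arbitrary small generating subcategory $S\subset\cT$ rather than a single object, and it is also the reason the paper needs the technical Lemma~\ref{homot_limits}: that lemma lets one compute $\widehat{\cA}_{\cT}$ as $\holim_n\cA_n$ \emph{without} ever producing a compact generator of $\cT$. If you insist on a single $M$, the argument only goes through when $X$ is regular near $Y$ (which is essentially the content of Corollary~\ref{regular}), not in the generality of Theorem~\ref{main-theo1}.

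\textbf{Different route for the rest.} The paper's generator is not a generator of $\cT$ but a compact generator $\cF\in\Perf(X)$ of $D(X)$ itself, so that $D(X)\simeq D(\cA)$ with $\cA=\bR\End(\cF)$. The tower whose homotopy limit is taken is $\cA_n:=\bR\End_{Y_n}(\bL\iota_n^*\cF)\cong\bR\Hom_X(\cF,\iota_{n*}\bL\iota_n^*\cF)$, not $\bR\End_X(\cO_{nY})$ — this is what keeps the generator in play and makes the final Morita identification with $D_{\alg}(\widehat{X}_Y)=D(\Perf_{\alg}(\widehat{X}_Y))$ come out for free, since $\End_{\Perf_{\alg}(\widehat{X}_Y)}(\kappa^*\cF)=\holim_n\cA_n$ by definition. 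The hypotheses of Lemma~\ref{homot_limits} (that $\cA_n\in\cT$ and $\hocolim_n\bR\Hom_{\cA}(\cA_n,E)\xrightarrow{\sim}E$ for $E\in\cT$) are checked globally in one stroke using Grothendieck's theorem on local cohomology together with the fact that $\bR\Gamma$ commutes with direct sums; there is no affine chart-by-chart gluing as in your sketch, and no separate Mittag--Leffler analysis of $\varprojlim^1$ — that has been absorbed into the lemma. Your ``secondary technical issue'' of identifying $D(\widehat{\cA}_M)$ with a genuine category of modules over the formal completion is also dissolved by the paper's choice of definition: $D_{\alg}(\widehat{X}_Y)$ is \emph{defined} as $D(\Perf_{\alg}(\widehat{X}_Y))$ precisely to make this a tautology rather than a theorem.
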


Here $D_{alg}(\widehat{X}_Y)$ is algebraizable derived category of $\widehat{X}_Y$ (it is defined in Subsection \ref{algebraizable}). We have the following
Corollaries (see Corollaries \ref{regular} and \ref{surj_of_alg}).

\begin{cor}Let $R$ be a regular commutative Noetherian $k$-algebra, and $M\in D_{f.g.}(R)\cong D^b_{coh}(\Spec R)$
be a complex of $R$-modules with finitely generated cohomology. Denote by $I\subset R$ the annihilator of $H^{\cdot}(M),$
so that $V(\sqrt{I})\subset \Spec R$ is precisely the support of $H^{\cdot}(M).$ Then we have an isomorphism
\begin{equation}\widehat{R}_M\cong \widehat{R}_I,\end{equation}
where the RHS is the ordinary $I$-adic completion.\end{cor}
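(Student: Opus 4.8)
The plan is to deduce the corollary from Theorem \ref{main-theo1} by unwinding what the formal completion $\widehat{D(X)}_{D^b_{coh,Y}(X)}$ and its defining DG algebra become in the affine regular case. First I would set $X=\Spec R$ and $Y=V(\sqrt I)$, so that $D^b_{coh,Y}(X)=\cT$ is the full thick subcategory of $D^b_{coh}(\Spec R)$ of complexes with cohomology supported on $V(\sqrt I)$. Since $R$ is regular Noetherian, $D^b_{coh}(\Spec R)=\Perf(\Spec R)$, and the subcategory $\cT$ is classically generated by a single object: for instance $M$ itself, or more conveniently the Koszul complex $K(f_1,\dots,f_n)$ on a set of generators of $I$, which is a perfect complex with cohomology supported exactly on $V(I)=V(\sqrt I)$. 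The key input is the well-definedness statement (special case of Proposition \ref{well-def}, 2)): $\widehat{R}_M$ depends only on the thick subcategory classically generated by $M$, so I may replace the generator $M$ by the Koszul generator without changing $\widehat R_M$ up to quasi-isomorphism. This reduces the claim to identifying $\widehat{\cA}_{\cT}$ for $\cA=R$ and $\cT$ the category of perfect complexes supported on $V(I)$.

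Next I would invoke Theorem \ref{main-theo1} in the affine case. Applied to $X=\Spec R$, $Y=V(\sqrt I)$, it gives an equivalence $\widehat{D(X)}_{\cT}\cong D_{alg}(\widehat X_Y)$ compatible with the restriction functors from $D(X)$. Since $R$ is Noetherian and $\widehat X_Y=\Spf \widehat R_I$ is the formal spectrum of the $I$-adic completion, the algebraizable derived category $D_{alg}(\widehat X_Y)$ (as defined in Subsection \ref{algebraizable}) is, by construction, the derived category of the Noetherian ring $\widehat R_I$, i.e. $D(\widehat R_I)$. Thus $\widehat{D(X)}_{\cT}\cong D(\widehat R_I)$. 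On the other hand, by the definition recalled in the introduction, $\widehat{D(X)}_{\cT}=\widehat{D(\cA)}_{\cT}=D(\widehat{\cA}_{\cT})=D(\widehat R_M)$, since $R$ is a (discrete) DG algebra and $M$ classically generates $\cT$. Hence $D(\widehat R_M)\cong D(\widehat R_I)$ as triangulated categories under $D(R)$.

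Finally I would promote the equivalence of derived categories to a quasi-isomorphism of DG algebras. The point is that both $\widehat R_M$ and $\widehat R_I$ come with canonical algebra maps from $R$ (respectively the double-centralizer map $R\to\widehat R_M$ and the completion map $R\to\widehat R_I$), and the equivalence $D(\widehat R_M)\cong D(\widehat R_I)$ provided by Theorem \ref{main-theo1} is compatible with the restriction functors $\bL\kappa^*$ from $D(R)$. A compactly generated triangulated category with a distinguished compact generator, together with the DG-algebra-of-endomorphisms of that generator, is recovered (up to quasi-isomorphism) by Morita theory; here the unit object $\widehat R_I$ (resp.\ the image of $R$, which is the relevant compact generator since $\widehat R_M$ is the double centralizer) is sent to the corresponding unit, and $\bR\End$ of these objects computes the DG algebra. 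Concretely: $\widehat R_M\simeq \bR\End_{D(\widehat R_M)}(\text{image of }R)$ and $\widehat R_I\simeq \bR\End_{D(\widehat R_I)}(\widehat R_I)$; the equivalence matches these generators (this is exactly the content of the commuting triangle in Theorem \ref{main-theo1}), hence induces a quasi-isomorphism $\widehat R_M\simeq \widehat R_I$ of DG $R$-algebras. Since $R$ is regular, $\widehat R_I$ is an ordinary (discrete) commutative ring, so $\widehat R_M$ has cohomology concentrated in degree $0$ and $\widehat R_M\cong \widehat R_I$ as rings.

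The main obstacle, I expect, is the last step: carefully checking that the abstract categorical equivalence of Theorem \ref{main-theo1} really matches up the \emph{specific} compact generators whose endomorphism DG algebras are $\widehat R_M$ and $\widehat R_I$, so that one gets an honest quasi-isomorphism of DG algebras and not merely an equivalence of derived categories. This requires knowing that the equivalence in the theorem sends the canonical generator coming from $\bL\kappa^*(\O_X)=\bL\kappa^*(R)$ on the left to the structure sheaf (unit object) on the right — which is precisely the commutativity of the displayed diagram — and then applying a Morita-type uniqueness statement for DG algebras. The regularity hypothesis enters both to guarantee $\cT$ is classically generated by one object (so $\widehat R_M$ is well-defined via Proposition \ref{well-def}) and to ensure the answer $\widehat R_I$ is discrete, identifying the classical $I$-adic completion.
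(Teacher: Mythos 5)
Your overall approach tracks the paper's: reduce to showing that the thick subcategory classically generated by $M$ equals $D^b_{coh,Y}(\Spec R)$, and then apply Theorem \ref{main-theo}. However, the proposal has a genuine gap at exactly the crucial reduction step. You assert, essentially without justification, that $M$ classically generates $\cT = D^b_{coh,Y}(\Spec R)$ (or equivalently, that $M$ and a Koszul complex on generators of $I$ classically generate the same thick subcategory, so that Proposition \ref{well-def} 2) lets you swap them). Neither of these is automatic: a perfect complex whose cohomology is supported on $Y$ need not a priori split-generate every perfect complex supported on $Y$. The paper's proof consists precisely of citing the Hopkins--Neeman theorem, which classifies the thick subcategories of $\Perf(\Spec R)$ classically generated by a single object as exactly the $\Perf_Z(\Spec R)$ for closed $Z$; that is the nontrivial input, and without it (or an equivalent support-theoretic argument) your reduction is unjustified. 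This is where the regularity hypothesis actually enters: it gives $D^b_{coh}(\Spec R) = \Perf(\Spec R)$, so that $M$ is perfect and Hopkins--Neeman applies.

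Your description of the role of regularity is therefore off on both counts. The claim that regularity is needed "to ensure the answer $\widehat R_I$ is discrete" is incorrect: the $I$-adic completion is always an ordinary commutative ring, regular or not. And "$\widehat R_M$ is well-defined" does not depend on regularity either; $\widehat R_M$ is defined for any $M$. What regularity buys is the identification $\langle M\rangle = D^b_{coh,Y}$, via Hopkins--Neeman. By contrast, the part you flag as the main obstacle — promoting the categorical equivalence of Theorem \ref{main-theo1} to a DG-algebra quasi-isomorphism by matching compact generators — is correct but is already built into the paper's proof of Theorem \ref{main-theo}: Lemma \ref{homot_limits} produces a commutative diagram in $\Ho(\dgalg_k)$ directly, so the isomorphism $\widehat R_{\cT} \cong \holim_n R/I^n = \widehat R_I$ is obtained at the level of DG algebras, not merely derived categories. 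So the Morita bookkeeping you worry about is taken care of, while the step you skip (Hopkins--Neeman) is the one that actually requires a citation.
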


\begin{cor}\label{surj_of_alg1}Let $R$ be commutative Noetherian $k$-algebra, and $I\subset R$ an ideal. Assume that either $R$ or $R/I$ is regular. Then we have an isomorphism
\begin{equation}\widehat{R}_{(R/I)}\cong\widehat{R}_I,\end{equation}
where the RHS is ordinary $I$-adic completion.\end{cor}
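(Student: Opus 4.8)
The plan is to deduce Corollary~\ref{surj_of_alg1} from the preceding corollary together with Theorem~\ref{main-theo1} by a reduction argument that splits into two cases according to which of $R$ or $R/I$ is assumed regular. In both cases the object under consideration is $M = R/I \in D^b_{\coh}(\Spec R)$, whose cohomology is supported exactly on $V(I)$, so the ideal $\sqrt{I}$ plays the role of the annihilator ideal appearing in the first corollary. The key point is that the derived double centralizer $\widehat{R}_{(R/I)}$, and more generally $\widehat{R}_{\cT}$ for $\cT$ the thick subcategory generated by $R/I$, depends only on this subcategory (Proposition~\ref{well-def}), and by Theorem~\ref{main-theo1} applied with $Y = V(I)$ it is computed by the global sections of the structure sheaf of the formal completion $\widehat{X}_Y$, which is precisely the ordinary $I$-adic completion $\widehat{R}_I$ when $X = \Spec R$ is affine.

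First I would treat the case where $R$ is regular. Here the thick subcategory of $D^b_{\coh}(\Spec R)$ generated by $R/I$ coincides with the thick subcategory of objects supported on $V(I)$ --- this is exactly where regularity of $R$ enters, since on a regular affine scheme every bounded complex with coherent cohomology supported on a closed set is built from $R/I$-type objects (more precisely, the thick closure of $\{R/I\}$ is all of $D^b_{\coh,V(I)}(\Spec R)$, because $R$ has finite global dimension and $R/I$ generates via Koszul-type resolutions). Consequently $\widehat{R}_{(R/I)} = \widehat{R}_{D^b_{\coh,V(I)}}$, and Theorem~\ref{main-theo1} identifies this with $\Gamma(\widehat{X}_Y, \cO) = \widehat{R}_I$. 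In fact this case is essentially the content of the first corollary specialized to $M = R/I$, so it requires little additional work.

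The case where $R/I$ is regular (but $R$ need not be) is the genuinely new one. Here I would argue that the thick subcategory generated by $R/I$ still has enough control over the formal neighborhood: although it may be strictly smaller than $D^b_{\coh,V(I)}(\Spec R)$, the derived double centralizer only sees the category $\cT$ generated by $M=R/I$, and one must show that this still yields the $I$-adic completion. The natural strategy is to compute $\bR\End_R(R/I)$ and its double centralizer directly, using that $R/I$ is a regular ring. One expects $\widehat{R}_{(R/I)}$ to be the completion of $R$ along the support, and the regularity of the quotient gives a clean model (e.g. via the cotangent complex $\bL_{(R/I)/R}$ being a finite projective $R/I$-module placed in degree $-1$, so that the Hochschild-type computation of the double centralizer terminates nicely). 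Alternatively, one can reduce to the first case by a base-change or Koszul argument: if $R/I$ is regular then locally $I$ is generated by a regular sequence after a suitable modification, bringing one back to the situation where the thick subcategory generated by $R/I$ is all of $D^b_{\coh,V(I)}$.

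The main obstacle will be the second case, specifically verifying that when $R/I$ is regular but $R$ is singular, the thick subcategory generated by $R/I$ inside $D^b_{\coh}(\Spec R)$ is still ``large enough'' --- or, more precisely, that the derived double centralizer $\widehat{R}_{(R/I)}$ agrees with $\widehat{R}_{D^b_{\coh,V(I)}}$ even though the generating subcategories may differ. This is plausible because completion is a purely formal-local operation and the double centralizer is insensitive to enlarging $\cT$ within a common ambient localization, but making this precise requires either a careful analysis of how $\bR\End_{\cB_M^{op}}(M)$ behaves under the inclusion $\cT \hookrightarrow D^b_{\coh,V(I)}(\Spec R)$, or a direct dévissage computation of the double centralizer of $R/I$ using the regularity of $R/I$ to resolve $M$ over its own endomorphism algebra. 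Once this identification is in hand, Theorem~\ref{main-theo1} with $X=\Spec R$ affine and $Y = V(I)$ immediately yields $\widehat{R}_{(R/I)} \cong \Gamma(\widehat{X}_Y,\cO_{\widehat{X}_Y}) \cong \widehat{R}_I$, completing the proof.
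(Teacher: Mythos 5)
There is a genuine gap in your second case, and it stems from a false worry. You write that when $R/I$ is regular but $R$ is not, the thick subcategory of $D^b_{\coh}(\Spec R)$ generated by $R/I$ ``may be strictly smaller than $D^b_{\coh,V(I)}(\Spec R)$,'' and you then spend the rest of the argument trying to bridge that supposed gap (via the cotangent complex, or a Koszul reduction, or a ``dévissage computation of the double centralizer''). In fact no gap exists: when $R/I$ is regular, $R/I$ classically generates all of $D^b_{\coh,V(I)}(\Spec R)$, and this is exactly the point the paper exploits. The reason is elementary: since $Y=\Spec(R/I)$ is affine and regular, $D^b_{\coh}(Y)=\Perf(Y)$ is classically generated by $\cO_Y$, and the exact functor $\iota_*$ therefore carries the thick closure of $\{\cO_Y\}$ into the thick closure of $\{\iota_*\cO_Y\}$, so every $\iota_*\cF$ with $\cF\in D^b_{\coh}(Y)$ lies in $\langle R/I\rangle$. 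Combined with the standard dévissage (a coherent sheaf supported on $V(I)$ has a finite filtration with graded pieces pushed forward from $Y$), this shows $\langle R/I\rangle = D^b_{\coh,V(I)}(\Spec R)$, after which Theorem~\ref{main-theo1} applies directly. Your first case ($R$ regular) is essentially correct and matches the paper.

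Your proposed workarounds would not close the gap you introduced. The Koszul-reduction idea --- ``if $R/I$ is regular then locally $I$ is generated by a regular sequence'' --- is false in general: regularity of the quotient $R/I$ does not make $Y\hookrightarrow X$ a regular embedding when $R$ itself is singular (e.g.\ $R=k[x,y]/(y^2)$, $I=(y)$: here $R/I=k[x]$ is regular but $y$ is a zero-divisor in $R$). The cotangent-complex route would likewise not simplify to ``a finite projective module in degree $-1$'' in this situation, since $\bL_{(R/I)/R}$ controls the regularity of the embedding, not of the quotient. The essential missing idea is the simple observation about generation by $\iota_*\cO_Y$; once you have it, the corollary reduces immediately to Theorem~\ref{main-theo1} with $X=\Spec R$, $Y=\Spec(R/I)$, exactly as you correctly describe in the final paragraph.
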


Moreover, Proposition \ref{infin_extensions} below shows that Corollary \ref{surj_of_alg1} fails to hold if we drop the regularity assumption.

We also relate our construction to Beilinson-Parshin adeles \cite{Be, P} (see Section \ref{sect_adeles} for definitions and notation).

\begin{theo}\label{relate_to_adeles1}Let $X$ be a separated Noetherian $k$-scheme of dimension $d.$ Fix some sequence $d\geq k_0>\dots>k_p\geq 0.$ If $p=0,$ then we have a commutative diagram,
\begin{equation}
\begin{CD}
D(X) @>\id >> D(X)\\
@V\A(X,-)_{(k_0)}VV                              @VVV\\
D(\A(X)_{(k_0)}) @>\cong >> \widehat{(D(X)/D_{\leq(k_0-1)}(X))}_{D^b_{coh,\leq k_0}}.
\end{CD}
\end{equation}

 For $p>0,$ there is a natural commutative diagram
\begin{equation}
\label{commut4}
\begin{CD}
D(\A(X)_{(k_1,\dots,k_p)}) @>\id >> D(\A(X)_{(k_1,\dots,k_p)})\\
@VVV                              @VVV\\
D(\A(X)_{(k_0,\dots,k_p)}) @>\cong >> \widehat{(D(\A(X)_{(k_1,\dots,k_p)})/D_{\leq(k_0-1)}(X))}_{D^b_{coh,\leq k_0}}.
\end{CD}
\end{equation}\end{theo}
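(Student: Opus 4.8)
The plan is to deduce Theorem \ref{relate_to_adeles1} from the main completion theorem (Theorem \ref{main-theo1}) together with the structure theory of Beilinson--Parshin adeles, treating the $p=0$ case first and then bootstrapping to $p>0$ by induction on $p$. For $p=0$ the key point is to recognize the adelic functor $\A(X,-)_{(k_0)}$ as the composition of the Verdier localization $D(X)\to D(X)/D_{\leq(k_0-1)}(X)$ followed by a formal completion along the image of $D^b_{coh,\leq k_0}(X)$. I would first recall that $D(X)/D_{\leq(k_0-1)}(X)$ is itself compactly generated (its compact objects being a suitable localization of $D^b_{coh}(X)$), so that the notion of formal completion from Section \ref{definition} applies, and that the essential image of $D^b_{coh,\leq k_0}(X)$ there is a full thick essentially small subcategory. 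Then, locally on $X$, the adele construction of level $k_0$ amounts to the iterated completion-and-localization procedure along a chain of points of codimension exactly $k_0$; combining the local computation of $\widehat{R}_I$ from Corollaries \ref{regular}/\ref{surj_of_alg1} (or, more precisely, the scheme-theoretic Theorem \ref{main-theo1} applied to the strata) with the standard description of $\A(X)_{(k_0)}$ as a product over such chains should identify $D(\A(X)_{(k_0)})$ with the claimed formal completion, and make the triangle commute on the nose because both legs send $\F$ to its adelic/completed restriction.

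\textbf{The inductive step.} For $p>0$ I would argue by descending induction, peeling off $k_0$ from the front of the flag $(k_0,\dots,k_p)$. The starting observation is the standard factorization of adeles
\begin{equation}
\A(X)_{(k_0,k_1,\dots,k_p)} \;=\; \A\bigl(X,\A(X)_{(k_1,\dots,k_p)}\bigr)_{(k_0)},
\end{equation}
i.e. the level-$(k_0,\dots,k_p)$ adeles are obtained by applying the single-index level-$k_0$ construction to the sheaf (of algebras) $\A(X)_{(k_1,\dots,k_p)}$ already built from the shorter flag. On the level of derived categories this says that $D(\A(X)_{(k_0,\dots,k_p)})$ is the level-$k_0$ adelic category associated to $D(\A(X)_{(k_1,\dots,k_p)})$, which by the $p=0$ case (applied now over the ``base'' category $D(\A(X)_{(k_1,\dots,k_p)})$ in place of $D(X)$, together with the compatible $t$-structure and coherence notions inherited from $X$) is a formal completion along $D^b_{coh,\leq k_0}$ inside the Verdier quotient by $D_{\leq(k_0-1)}(X)$. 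Matching this with the right-hand column of \eqref{commut4} and checking that the identity along the top and the two vertical adele maps form a commuting square is then a matter of naturality of the adele functors in the coefficient sheaf.

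\textbf{Main obstacle.} The hard part will be the $p=0$ identification
\begin{equation}
D(\A(X)_{(k_0)}) \;\cong\; \widehat{(D(X)/D_{\leq(k_0-1)}(X))}_{D^b_{coh,\leq k_0}},
\end{equation}
and in particular checking that the localization $D(X)/D_{\leq(k_0-1)}(X)$ interacts correctly with formal completion so that ``complete, then localize'' agrees with ``localize, then complete along the appropriate subcategory''; this requires knowing that the derived double centralizer is insensitive to the Verdier quotient in the relevant sense, which should follow from the Morita-invariance Proposition \ref{Morita_invar} and the fact that $D^b_{coh,\leq k_0}$ lives in codimension $>k_0-1$ and hence maps fully faithfully into the quotient. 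A secondary technical point is that the adele sheaf $\A(X)_{(k_0)}$ is only a sheaf of (commutative, non-Noetherian) rings, not the global sections of a nice algebra, so to apply the DG-algebra formalism of the introduction one must work locally and then glue; the gluing is where one uses separatedness of $X$ and the Zariski-local nature of both the adele construction and of formal completion. Once these two points are settled, commutativity of both diagrams is formal, since each path computes the derived adelic restriction of a quasi-coherent complex and the isomorphisms are constructed precisely to intertwine the two descriptions of that restriction.
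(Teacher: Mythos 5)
Your proposal captures the broad shape of the argument — localize away codimension $>k_0$, then recognize the adelic algebra as a completion along the image of $D^b_{coh,\leq k_0}$, and handle higher $p$ by peeling $k_0$ off the flag — but it is missing the key technical tools that actually carry the proof, and a couple of the steps as stated would not go through.

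The central missing ingredient is Lemma \ref{homot_limits}, which identifies the derived double centralizer $\widehat{\cA}_{\cT}$ with a homotopy limit $\holim_I \cA_x$ under suitable hocolim-exhaustion hypotheses on $\cT$. The paper's proof of the theorem is essentially two applications of this lemma, together with Proposition \ref{product_gen} (the product decomposition of $\widehat{\cA}_{\cT}$ over mutually orthogonal $\cT_\beta$) and the orthogonality Lemmas \ref{description} and \ref{description2}. Concretely: after passing to the Verdier quotient, the images of $D^b_{coh,\overline{\eta}}(X)$ for distinct $\eta\in S(X)_{(k_0)}$ become mutually orthogonal, so $\widehat{\cA}_{\cT}\cong\prod_\eta \widehat{\cA}_{\cT_\eta}$, and then each factor is computed by the homotopy-limit lemma as $\holim_l\bR\End(\bL j_{\eta,l}^*E)\cong\bR\End_{\widehat{\cO}_\eta}(\widehat{E}_\eta)$, Morita equivalent to $\widehat{\cO}_\eta$. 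Your proposal instead appeals to Theorem \ref{main-theo1} and "Morita invariance," but Theorem \ref{main-theo1} is itself a consequence of Lemma \ref{homot_limits}, not a black box from which the adelic statement follows; one cannot get the product over points, the quotient by $D_{\leq(k_0-1)}(X)$, or the local nature of $\widehat{\cO}_\eta$ out of that theorem alone. You need the lemma directly.

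Two further concrete problems. First, the factorization you write, $\A(X)_{(k_0,\dots,k_p)}=\A(X,\A(X)_{(k_1,\dots,k_p)})_{(k_0)}$, does not typecheck: $\A(X)_{(k_1,\dots,k_p)}$ is a $k$-algebra (a $k$-module), not a quasi-coherent sheaf on $X$, and the adele functor $\A(X,-)_{(k_0)}$ takes quasi-coherent sheaves as input. The actual recursion built into the definition goes through $\prod_\eta\lim_n\A(X,j_{\eta*}(\cF_\eta/\m_\eta^n))_{(k_1,\dots,k_p)}$, and the paper exploits exactly this shape — it does not reduce the $p>0$ case to the $p=0$ case over a "new base," but runs the same two-lemma argument again with $\cB=\bR\End_{D(\A(X)_{(k_1,\dots,k_p)})/D_{\leq(k_0-1)}(X)}(\A(X)_{(k_1,\dots,k_p)})$ in place of $\cA$, plus the new Lemma \ref{phi_preserves} to control the right adjoint $\Phi$. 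There is no reason the $p=0$ argument, which uses compact generation and support theory for $D(X)$ of a Noetherian scheme, would transfer verbatim to $D(\A(X)_{(k_1,\dots,k_p)})$ without this extra work. Second, your worry about $\A(X)_{(k_0)}$ being "only a sheaf" and needing to glue via separatedness is a red herring: $\A(X)_{(k_0)}$ is a single commutative $k$-algebra (a product of completed local rings), not a sheaf, and there is no gluing step in the proof.

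So: right general picture, but the proof as sketched cannot be completed without (a) the homotopy-limit-vs.-double-centralizer lemma, (b) the orthogonal product decomposition, and (c) a corrected and non-circular treatment of the inductive step that follows the actual recursion in the adele definition rather than the factorization you wrote.
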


The paper is organized as follows.

In Section \ref{preliminaries} we recall some preliminaries on DG categories.

Section \ref{definition} is devoted to the definition of categorical formal completion (and in particular derived double centralizers) and all necessary checkings which show that it is well-defined.

In Section \ref{properties} we investigate various properties of formal completions. In particular, we show (Theorem \ref{completion_complete}) that under some natural assumptions on $\cT\subset\cD,$
the restriction of the functor $\kappa^*:\cD\to\widehat{\cD}_{\cT}$ to the subcategory $\cT$ is full and faithful. Moreover, under the same assumptions the functor
\begin{equation}\widehat{\cD}_{\cT}\to\widehat{\widehat{\cD}_{\cT}}_{\cT}\end{equation}
is an equivalence.

Section \ref{main} is devoted to Theorem \ref{main-theo1} (Theorem \ref{main-theo}). In Subsection \ref{homt_lim_subsect} we prove useful technical result, Lemma \ref{homot_limits}, which relates double centralizers with homotopy limits of DG algebras. In Subsection \ref{algebraizable} we define algebraizable derived categories of formal completions of Noetherian schemes. Then we apply Lemma \ref{homot_limits} to prove Theorem \ref{main-theo}.

Section \ref{sect_adeles} is devoted to interpretation of Beilinson-Parshin adeles in terms of categorical formal completions and Drinfeld quotients (Theorem \ref{relate_to_adeles}).
Here our main technical tool is also Lemma \ref{homot_limits}.

\section{Preliminaries}
\label{preliminaries}

Fix some base commutative ring $k.$ All DG categories under consideration will be over $k.$ All DG modules which we consider will be right DG modules. In particular, for $\cA\in\dgcat_k,$ we denote by $D(\cA)$ the derived category of right DG $\cA$-modules.
Also, denote by $\cA\text{-mod}$ the DG category of right $\cA$-modules.

\begin{defi}Let $\cA$ be a DG category. A DG module $M\in \cA\text{-mod}$ is called h-projective (resp. h-injective) if for any acyclic DG module $N\in\cA\text{-mod}$
the complex $\Hom_{\cA}(M,N)$ (resp. $\Hom_{\cA}(N,M)$) is acyclic. We denote by $\text{h-proj}(\cA)\subset \cA\text{-mod}$ (resp. $\text{h-inj}(\cA)\subset \cA\text{-mod}$)
the full DG subcategory which consists of h-projective (resp. h-injective) DG modules.

We also call $M\in\cA\text{-mod}$ h-flat if for any acyclic $N\in\cA^{op}\text{-mod}$ the complex $M\otimes_{\cA}N$ of $k$-modules is also acyclic.\end{defi}

It is easy to see that all h-projective DG modules are also h-flat.

Denote by $\dgcat_k$ the category of small DG $k$-linear categories. It has natural model category structure \cite{T}, with
weak equivalences being quasi-equivalences. All DG categories are fibrant in this model structure.

We call DG category $\cA\in\dgcat_k$ h-flat (over $k$) if all complexes $\Hom_{\cA}(X,Y),$ $X,Y\in Ob(\cA),$ are h-flat $k$-modules. We define h-projective (over $k$)
 DG categories in the same way. All cofibrant DG categories
are h-projective, hence h-flat. In particular, each DG category is quasi-equivalent to an h-flat one.

\begin{defi}Let $\cA\in\dgcat_k$ be an h-flat DG category. We say that $\cA$ is smooth (over $k$) if $\cI_{\cA}\in\Perf(\cA^{op}\otimes\cA),$ where
\begin{equation}\cI_{\cA}\in D(\cA^{op}\otimes\cA),\quad \cI_{\cA}(X,Y)=\Hom_{\cA}(Y,X).\end{equation}
An arbitrary DG category $\cA\in\dgcat_k$ is said to be smooth if it is quasi-equivalent to smooth h-flat DG category.\end{defi}

There is an alternative nice well-known definition of smooth DG categories.

\begin{prop}\label{alternative}Let $\cA\in\dgcat_k$ be a DG category. Then the following are equivalent:

(i) $\cA$ is smooth;

(ii) For any h-flat $\cB\in\dgcat_k,$ and  any object $M\in D(\cA\otimes\cB)$ such that $M(X,-)\in\Perf(\cB)$ for all $X\in Ob(\cA),$ we have
that $M\in\Perf(\cA\otimes\cB).$\end{prop}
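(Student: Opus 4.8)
The plan is to prove the equivalence of the two characterizations of smoothness by reducing everything to the case of a single h-flat DG category, where the bimodule $\cI_{\cA}$ plays the role of the identity, and then exploiting the fact that perfectness of a bimodule can be detected by tensoring.

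\textbf{Proof sketch.} First I would observe that (ii) trivially implies (i): take $\cB=\cA^{op}$ (after replacing $\cA$ by a quasi-equivalent h-flat DG category, which changes neither the truth of (i) nor of (ii)), and apply (ii) to $M=\cI_{\cA}\in D(\cA\otimes\cA^{op})$. Indeed, $\cI_{\cA}(X,-)=\Hom_{\cA}(-,X)$ is the representable $\cA^{op}$-module $h^X\in\Perf(\cA^{op})$, so (ii) forces $\cI_{\cA}\in\Perf(\cA^{op}\otimes\cA)$, which is precisely smoothness.

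For the converse, assume $\cA$ is h-flat and smooth, so $\cI_{\cA}\in\Perf(\cA^{op}\otimes\cA)$. Let $\cB$ be h-flat and $M\in D(\cA\otimes\cB)$ with $M(X,-)\in\Perf(\cB)$ for every object $X$. The key step is the canonical isomorphism
\begin{equation}
M\cong \cI_{\cA}\otimes_{\cA}M\quad\text{in }D(\cA\otimes\cB),
\end{equation}
which expresses $M$ as the convolution of the diagonal bimodule with $M$. Now I would argue that the functor $N\mapsto N\otimes_{\cA}M$ sends $\Perf(\cA^{op}\otimes\cA)$ into $\Perf(\cA\otimes\cB)$. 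This functor is exact, commutes with direct sums, and sends the generators of $\Perf(\cA^{op}\otimes\cA)$ — the representable bimodules $h^X\boxtimes h_Y$, i.e. $\Hom_{\cA}(-,X)\otimes\Hom_{\cA}(Y,-)$ — to $h_Y\boxtimes M(X,-)$, which lies in $\Perf(\cA\otimes\cB)$ precisely because $M(X,-)\in\Perf(\cB)$ and $h_Y\in\Perf(\cA)$, and a box product of perfect modules over h-flat DG categories is perfect. Since $\cI_{\cA}$ is a retract of a finite extension of shifts of such generators, $M\cong\cI_{\cA}\otimes_{\cA}M$ is a retract of a finite extension of shifts of objects of $\Perf(\cA\otimes\cB)$, hence itself perfect. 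Finally, for an arbitrary (not necessarily h-flat) smooth $\cA$ one picks an h-flat $\cA'$ quasi-equivalent to $\cA$; the hypothesis and conclusion of (ii) are invariant under replacing $\cA$ by $\cA'$ (and $\cB$ by an h-flat replacement), so the general case follows.

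\textbf{Main obstacle.} I expect the delicate point to be the identification $M\cong\cI_{\cA}\otimes_{\cA}M$ together with the compatibility of the derived tensor product $\otimes_{\cA}$ with the external tensor decompositions — one must be careful that the various h-flatness hypotheses ($\cA$, $\cB$ h-flat) guarantee that the underived and derived box products agree and that $\cI_{\cA}\otimes_{\cA}^{\bL}(h^X\boxtimes h_Y)$ really computes what it should. Once this bookkeeping is in place, the generation argument for $\Perf$ and the retract/two-out-of-three properties of perfect modules finish the proof routinely.
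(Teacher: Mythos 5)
Your argument is correct: the direction (ii)\,$\Rightarrow$\,(i) by specializing to $\cB=\cA^{\mathrm{op}}$ and $M=\cI_{\cA}$, and the converse via $M\cong\cI_{\cA}\otimes^{\bL}_{\cA}M$ together with the observation that $-\otimes^{\bL}_{\cA}M$ is triangulated and sends representable $\cA^{\mathrm{op}}\otimes\cA$-bimodules to external products of a representable $\cA$-module with $M(X,-)\in\Perf(\cB)$, is precisely the standard argument. The paper itself offers no proof beyond "This is straightforward," so your write-up simply supplies the content the author left implicit; the bookkeeping you flag (derived vs.\ underived $\otimes_{\cA}$, invariance under h-flat replacement) is handled correctly since $\cI_{\cA}\in\Perf(\cA^{\mathrm{op}}\otimes\cA)$ admits a finite h-projective resolution and tensoring against an h-flat category preserves quasi-equivalences.
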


\begin{proof}This is straightforward.\end{proof}

\begin{cor}If $\cA_1,\cA_2\in\dgcat_k$ are Morita equivalent and $\cA_1$ is smooth, then so is $\cA_2.$\end{cor}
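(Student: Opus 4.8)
The plan is to reduce to the case where both categories are h-flat and then exhibit an equivalence of bimodule categories that carries the diagonal bimodule $\cI_{\cA_1}$ to $\cI_{\cA_2}$. Since smoothness and Morita equivalence are both invariant under quasi-equivalence and every DG category admits an h-flat model, I would first replace $\cA_1$ and $\cA_2$ by h-flat categories and assume $\cI_{\cA_1}\in\Perf(\cA_1^{op}\otimes\cA_1)$; it then suffices to prove that $\cI_{\cA_2}\in\Perf(\cA_2^{op}\otimes\cA_2)$. Next I would recall that a Morita equivalence between $\cA_1$ and $\cA_2$ is encoded by bimodules $P\in D(\cA_1^{op}\otimes\cA_2)$ and $Q\in D(\cA_2^{op}\otimes\cA_1)$ together with quasi-isomorphisms $Q\otimes^{\bL}_{\cA_1}P\simeq\cI_{\cA_2}$ and $P\otimes^{\bL}_{\cA_2}Q\simeq\cI_{\cA_1}$, where $\cI_{\cA_i}$ denotes the diagonal bimodule, which is the unit for $\otimes^{\bL}_{\cA_i}$. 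Replacing $P$ and $Q$ by h-projective representatives, all the derived tensor products appearing below are computed naively and are associative.

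The main step is to consider the functor
\begin{equation}
\Psi\colon D(\cA_1^{op}\otimes\cA_1)\lto D(\cA_2^{op}\otimes\cA_2),\qquad \Psi(R)=Q\otimes^{\bL}_{\cA_1}R\otimes^{\bL}_{\cA_1}P,
\end{equation}
together with $\Psi'(R')=P\otimes^{\bL}_{\cA_2}R'\otimes^{\bL}_{\cA_2}Q$ in the opposite direction. Using associativity and the two relations on $P,Q$, one computes $\Psi'\Psi(R)\simeq(P\otimes^{\bL}_{\cA_2}Q)\otimes^{\bL}_{\cA_1}R\otimes^{\bL}_{\cA_1}(P\otimes^{\bL}_{\cA_2}Q)\simeq\cI_{\cA_1}\otimes^{\bL}_{\cA_1}R\otimes^{\bL}_{\cA_1}\cI_{\cA_1}\simeq R$, and symmetrically $\Psi\Psi'\simeq\id$, so $\Psi$ is an equivalence of triangulated categories. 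For any small DG category $\cC$ the category $D(\cC)$ is compactly generated, with compact objects exactly $\Perf(\cC)$, and an equivalence of triangulated categories preserves compact objects; hence $\Psi$ restricts to an equivalence $\Perf(\cA_1^{op}\otimes\cA_1)\isomoto\Perf(\cA_2^{op}\otimes\cA_2)$. Since $\Psi(\cI_{\cA_1})=Q\otimes^{\bL}_{\cA_1}\cI_{\cA_1}\otimes^{\bL}_{\cA_1}P\simeq Q\otimes^{\bL}_{\cA_1}P\simeq\cI_{\cA_2}$, the hypothesis $\cI_{\cA_1}\in\Perf(\cA_1^{op}\otimes\cA_1)$ forces $\cI_{\cA_2}\in\Perf(\cA_2^{op}\otimes\cA_2)$, i.e.\ $\cA_2$ is smooth.

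The one point requiring care is the bimodule calculus: one must pass to h-flat DG categories and to h-projective bimodule representatives so that the composites $Q\otimes^{\bL}_{\cA_1}(-)\otimes^{\bL}_{\cA_1}P$ are well defined and strictly associative — equivalently, one works inside the Morita bicategory of DG categories — and one must use the identification of $\Perf(\cC)$ with the compact objects of $D(\cC)$. Both are routine. One could instead try to verify condition (ii) of Proposition \ref{alternative} directly for $\cA_2$ by base-changing the Morita equivalence to an equivalence $D(\cA_1\otimes\cB)\simeq D(\cA_2\otimes\cB)$ and transporting the modules that are perfect over $\cB$, but this requires knowing that the transporting bimodules are perfect over the non-$\cB$ factor, which is most easily deduced from smoothness itself; the argument above via the diagonal bimodules avoids this and is cleaner.
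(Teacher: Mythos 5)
Your proof is correct, but it takes a genuinely different route from the paper's. The paper simply invokes Proposition~\ref{alternative} (the base-change characterization of smoothness): for an h-flat $\cB$ and $M\in D(\cA_2\otimes\cB)$ with $M(Y,-)\in\Perf(\cB)$ for all $Y\in\cA_2$, one transports $M$ to $M'=M\otimes^{\bL}_{\cA_2}Q\in D(\cA_1\otimes\cB)$, checks $M'(X,-)\in\Perf(\cB)$, applies condition (ii) for $\cA_1$, and transports back. You instead work directly from the definition, building an explicit equivalence $\Psi=Q\otimes^{\bL}_{\cA_1}(-)\otimes^{\bL}_{\cA_1}P$ between the bimodule categories and observing that it carries $\cI_{\cA_1}$ to $\cI_{\cA_2}$ and preserves compacts. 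Your approach is more self-contained (it does not rely on the unproved ``straightforward'' Proposition~\ref{alternative}) at the cost of being more bimodule-calculus heavy; the paper's is a one-line deduction but defers the work to the cited proposition. One inaccuracy in your concluding remark: you say the route via Proposition~\ref{alternative} ``requires knowing that the transporting bimodules are perfect over the non-$\cB$ factor, which is most easily deduced from smoothness itself.'' This is not so. The needed fact is that $Q(-,X)\in\Perf(\cA_2)$ for each $X\in\cA_1$, and this follows from the Morita equivalence alone: $Q(-,X)$ is the image of the compact object $\cA_1(-,X)$ under the quasi-inverse equivalence $D(\cA_1)\simeq D(\cA_2)$, and equivalences preserve compacts. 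Once that is granted, $M'(X,-)=M\otimes^{\bL}_{\cA_2}Q(-,X)$ lies in $\Perf(\cB)$ because $\Perf(\cB)$ is a thick subcategory containing $M\otimes^{\bL}_{\cA_2}\cA_2(-,Y)=M(Y,-)$ for all $Y$. So the paper's route is not circular, and both arguments are valid.
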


\begin{proof}This follows directly from \ref{alternative}.\end{proof}

\begin{lemma}\label{replace_by_alg}Let $\cA\in\dgcat_k$ be a smooth DG category. Then it is Morita equivalent to some (smooth) DG algebra.\end{lemma}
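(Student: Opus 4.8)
The strategy is to produce a single compact generator of the derived category $D(\cA)$ that happens to be a perfect $\cA$-module, and then invoke a Morita-theoretic argument. First I would recall that since $\cA$ is smooth (and we may assume h-flat), the diagonal bimodule $\cI_{\cA}$ is perfect over $\cA^{op}\otimes\cA$; applying Proposition \ref{alternative}, part (ii), with $\cB=\cA^{op}$ is not quite what is needed, but the key consequence of smoothness I want is that $\Perf(\cA)$ is classically generated by a single object. Concretely, I would argue that $\cA$ has a \emph{compact generator} $G\in\Perf(\cA)$: the representable modules $h_X$, $X\in Ob(\cA)$, generate $D(\cA)$, and one shows using the perfectness of $\cI_{\cA}$ that the thick subcategory they generate is already the thick closure of a finite subsum, hence of a single object $G=\bigoplus_{i} h_{X_i}[n_i]$ after taking a suitable finite direct sum (here one uses that $\cA$ being smooth forces $\cI_{\cA}$, written as a twisted complex, to involve only finitely many representables).

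Next, set $\cC_G := \bR\End_{\cA}(G)$, a DG algebra. Since $G$ is a compact generator of $D(\cA)$, the standard Morita theorem for DG categories (Keller; this is the derived Morita theory underlying the statement ``Morita equivalent'' in $\dgcat_k$) gives a quasi-equivalence $\Perf(\cA)\simeq\Perf(\cC_G)$, equivalently an equivalence $D(\cA)\simeq D(\cC_G)$ compatible with the compact objects. Thus $\cA$ and $\cC_G$ are Morita equivalent, and $\cC_G$ is a DG algebra as required. Finally, smoothness is a Morita invariant by the Corollary following Proposition \ref{alternative}, so $\cC_G$ is automatically smooth as well, which justifies the parenthetical ``(smooth)''.

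The main obstacle is the first step: exhibiting a \emph{single} compact generator of $D(\cA)$ rather than just a generating set. For a general small DG category the representables form a generating set but need not admit a finite, let alone one-object, refinement; it is precisely smoothness that is doing the work, and one must extract from $\cI_{\cA}\in\Perf(\cA^{op}\otimes\cA)$ the finiteness needed. The cleanest route is: write $\cI_{\cA}$ as a direct summand of a finite twisted complex built from objects of the form $h_X\boxtimes h^Y$; the finitely many $X$'s appearing then give a finite set $\{X_i\}$ such that every representable $h_X$ lies in the thick subcategory generated by $\{h_{X_i}\}$ (convolve $\cI_\cA$ against $h_X$ on one side), whence $G=\bigoplus_i h_{X_i}$ is a compact generator. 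Everything after that is formal: apply derived Morita theory and invoke the Morita-invariance of smoothness. I would also remark that if one only wants Morita equivalence to a DG \emph{category} with finitely many objects this is immediate, and the passage to a single object is exactly the reduction to a DG algebra.
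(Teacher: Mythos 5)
Your approach is essentially the paper's: use perfectness of $\cI_{\cA}$ to extract a finite set of representables whose direct sum compactly generates $D(\cA)$, take its endomorphism DG algebra, and invoke derived Morita theory (plus Morita-invariance of smoothness). Two small imprecisions in the write-up, neither fatal. First, the convolution step (tensoring $h_Z$ against $\cI_\cA$) produces $h_Z(X_i)\otimes_k h_{Y_i}$, i.e.\ representables $h_{Y_i}$ tensored by complexes of $k$-modules that need not be $k$-perfect; so this directly shows $h_Z$ lies in the \emph{localizing} subcategory generated by the $h_{Y_i}$, not a priori in the thick one. That is enough to conclude $\bigoplus_i h_{Y_i}$ is a compact generator (which is what you actually need), and then $\Perf(\cA)=\langle\bigoplus_i h_{Y_i}\rangle_{\mathrm{thick}}$ follows from the Neeman--Ravenel description of compacts, but you should phrase the intermediate step in terms of localizing subcategories as the paper does. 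Second, the generator should be built from the $\cA$-side factors of the $X_i\otimes Y_i$ (the paper's $Y_i$), since convolving a right $\cA$-module against $\cI_\cA$ leaves the $\cA^{op}$-side factor as a mere coefficient complex; your text swaps the roles, likely a notational slip.
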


\begin{proof}It suffices to show that the category $D(\cA)$ is compactly generated by one object. We may and will assume that $\cA$ is h-flat. By definition,
there exists a finite collection of objects $X_1\otimes Y_1,\dots,X_n\otimes Y_n\in\cA^{op}\otimes\cA,$ which generate the diagonal bimodule
$\cI_{\cA}\in D(\cA^{op}\otimes\cA).$ It follows that each object $M\in D(\cA)$ is generated by $M(X_i)\otimes Y_i,$ $1\leq i\leq n.$ Therefore,
$\bigoplus\limits_{i=1}^nY_i\in \Perf(\cA)$ is a compact generator of $D(\cA).$\end{proof}

\begin{defi}Let $\cA\in\dgcat_k$ be a DG category. We say that $\cA$ is proper (over $k$) if for any two objects $X,Y\in Ob(\cA),$ the complex $\Hom_{\cA}(X,Y)$
is a perfect $k$-module.\end{defi}

We have an analogue of Proposition \ref{alternative}.

\begin{prop}Let $\cA\in\dgcat_k$ be a DG category. Then the following are equivalent:

(i) $\cA$ is proper;

(ii) For any h-flat $\cB\in\dgcat_k,$ and  any object $M\in\Perf(\cA\otimes\cB)$ we have that $M(X,-)\in\Perf(\cB)$ for all $X\in Ob(\cA).$\end{prop}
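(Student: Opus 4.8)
The plan is to mimic the proof of Proposition \ref{alternative}, since properness is dual to smoothness in exactly the way the two statements parallel each other. I would prove the two implications separately, using the characterization of $\Perf$ and the behavior of the derived tensor product.

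First I would prove (i) $\Rightarrow$ (ii). Assume $\cA$ is proper and let $M\in\Perf(\cA\otimes\cB)$ with $\cB$ h-flat. The subcategory of $D(\cA\otimes\cB)$ consisting of those $M$ for which $M(X,-)\in\Perf(\cB)$ for all $X\in Ob(\cA)$ is a thick triangulated subcategory (it is closed under shifts, cones, and direct summands, since evaluation at $X$ is exact and $\Perf(\cB)$ is thick). Since $\Perf(\cA\otimes\cB)$ is the thick closure of the representable modules $h_{(X',Y')} = \Hom_{\cA}(-,X')\otimes\Hom_{\cB}(-,Y')$, it suffices to check the condition on these generators. For such a module, evaluation at $X$ gives $\Hom_{\cA}(X,X')\otimes\Hom_{\cB}(-,Y')$, which is a perfect $k$-module (by properness of $\cA$) tensored with a representable $\cB$-module; as $\cB$ is h-flat this is a finite complex of (sums of shifts of) representable $\cB$-modules, hence lies in $\Perf(\cB)$. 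This gives the implication.

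Next I would prove (ii) $\Rightarrow$ (i). Take $\cB=\cA^{op}$, which we may assume h-flat (replacing $\cA$ by a quasi-equivalent h-flat DG category does not affect either condition, since properness is a Morita-type invariant and the h-flatness hypotheses in (ii) are stable under quasi-equivalence). Apply (ii) to the diagonal bimodule $M=\cI_{\cA}\in D(\cA\otimes\cA^{op})$. Wait — the diagonal bimodule is not perfect unless $\cA$ is smooth, so this is the wrong test object. Instead, for fixed objects $X_0,Y_0\in Ob(\cA)$, apply (ii) to the perfect module $h_{X_0}\boxtimes h_{Y_0}^{op}\in\Perf(\cA\otimes\cA^{op})$ (the external tensor product of the representables, which is manifestly perfect); its value at $X_0$ in the $\cA$-variable is, up to the representable $\cA^{op}$-factor, the complex $\Hom_{\cA}(X_0,Y_0)$, and tracking through the definition of $\Perf(\cA^{op})$ together with the fact that a representable $\cA^{op}$-module generates a copy of $k$ in the relevant Hom-complexes forces $\Hom_{\cA}(X_0,Y_0)\in\Perf(k)$. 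More directly: take $\cB=k$ (which is h-flat) in (ii); then $\Perf(\cA\otimes k)=\Perf(\cA)$, and for $M\in\Perf(\cA)$ we get $M(X)\in\Perf(k)$ for all $X$ — applying this to a representable $M=h_Y$ gives $h_Y(X)=\Hom_{\cA}(X,Y)\in\Perf(k)$, which is exactly properness.

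The main obstacle is organizing the two directions so that the reduction to representable modules is clean and the role of h-flatness is used correctly — in particular, being careful that in (ii) $\Rightarrow$ (i) one tests against genuinely perfect modules (representables and their thick closure) rather than the diagonal bimodule, and noting that the case $\cB=k$ already does all the work for that implication. Everything else is a routine thick-subcategory argument combined with the observation that $\Perf$ of a DG category is generated by representables. Since the paper itself says the analogous Proposition \ref{alternative} is "straightforward," I expect the author's proof here to be similarly brief, likely just the sentence "This is proved in the same way as Proposition \ref{alternative}" or an equally short remark.
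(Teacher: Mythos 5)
Your proof is correct: the reduction of (i)\,$\Rightarrow$\,(ii) to representables via the thick-subcategory argument, and the choice $\cB=k$ applied to representable modules for (ii)\,$\Rightarrow$\,(i), are exactly the routine steps that the paper's one-word proof (``Evident'') is pointing at. You rightly abandoned the diagonal-bimodule test object (which is perfect only for smooth $\cA$) in favor of $\cB=k$, and your use of h-flatness of $\cB$ to identify the underived tensor $\Hom_{\cA}(X,X')\otimes_k h_{Y'}$ with the derived one is precisely where that hypothesis enters.
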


\begin{proof}Evident.\end{proof}

Finally, we recall the DG enhancement for the quotient of enhanced triangulated categories. Namely, let $\cD$ be a compactly generated enhanced triangulated category, and $\cD'\subset \cD$ its localizing subcategory, and assume that $\cD'$ is compactly generated by $\cD'\cap\cD^c.$
According to \cite{Ke2, Dr}, the quotient category $\cD/\cD'$ is also enhanced (and compactly generated by the images of compact objects in $\cD$).

Similarly, if $\cD$ is essentially small enhanced triangulated category, and $\cD'\subset\cD$ a triangulated subcategory, then the quotient $\cD/\cD'$
is naturally enhanced.

\section{Definition of categorical  formal completion}
\label{definition}

Fix a base graded commutative ring $k.$

Let $\cA$ be a small DG category. We may and will replace it by h-projective quasi-equivalent one. All tensor products below are assumed to be over $k$ unless otherwise stated. It is well known that the category $D(\cA)$ is compactly generated by the set of objects $Ob(\cA),$ and we have that
\begin{equation}D(\cA)^c=\Perf(\cA),\end{equation}
see \cite{Ke}.

Now let $S\subset D(\cA)$ be a full small subcategory (not necessarily triangulated). Choosing an h-projective (resp. h-injective) resolution $\widetilde{X}$ of each object $X\in S,$ we obtain a DG category
$\cB_S$ with
\begin{equation}\label{B_S}Ob(\cB_S)=Ob(S),\quad \Hom_{\cB_S}(X,Y):=\Hom_{\cA}(\widetilde{X},\widetilde{Y}).\end{equation}

\begin{lemma}\label{End(S)}The DG category $\cB_{S}$ is well-defined up to a quasi-equivalence.\end{lemma}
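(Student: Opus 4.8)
The plan is to show that the construction of $\cB_S$ from the choices of resolutions is independent of those choices up to quasi-equivalence, and in fact that any two such DG categories are related by an explicit zig-zag of quasi-equivalences through a larger DG category. First I would fix two families of resolutions: say $\{\widetilde{X}\}_{X\in S}$ and $\{\widetilde{X}'\}_{X\in S}$, each of which is h-projective (the h-injective case is dual and handled the same way). These give DG categories $\cB_S$ and $\cB_S'$ with the same objects. The key point is that for h-projective resolutions, a quasi-isomorphism $\widetilde{X}\to\widetilde{X}'$ lifting $\id_X$ exists (it is the standard lifting property for h-projectives against surjective quasi-isomorphisms, after replacing $\widetilde{X}'$ by a term-wise surjective resolution, or more cleanly: any two h-projective resolutions of the same object are homotopy equivalent).

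The main step is then to build a single DG category $\cC$ with $Ob(\cC)=Ob(S)$ and $\Hom_{\cC}(X,Y)=\Hom_{\cA}(\widetilde{X},\widetilde{Y}')$, using both families simultaneously; one must check this actually forms a DG category, i.e. that composition $\Hom_{\cA}(\widetilde{Y},\widetilde{Z}')\otimes\Hom_{\cA}(\widetilde{X},\widetilde{Y}')\to\Hom_{\cA}(\widetilde{X},\widetilde{Z}')$ is well-defined — this requires fixing, for each $X$, a chosen homotopy equivalence $\phi_X\colon\widetilde{X}\to\widetilde{X}'$, and defining composition by inserting $\phi_Y$ in the middle: $g\circ f := g\circ\phi_Y\circ f$ after reinterpreting the targets/sources appropriately. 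Actually the cleaner route is to define $\cC$ as the DG category whose objects are pairs $(X,\epsilon)$ with $\epsilon\in\{0,1\}$, morphisms $\Hom((X,0),(Y,0))=\Hom_{\cA}(\widetilde X,\widetilde Y)$, $\Hom((X,1),(Y,1))=\Hom_{\cA}(\widetilde X',\widetilde Y')$, and $\Hom((X,0),(Y,1))=\Hom_{\cA}(\widetilde X,\widetilde Y')$, $\Hom((X,1),(Y,0))=\Hom_{\cA}(\widetilde X',\widetilde Y)$, with the evident composition coming from composing maps of DG $\cA$-modules. This is manifestly a DG category, it contains $\cB_S$ and $\cB_S'$ as full DG subcategories (the $\epsilon=0$ and $\epsilon=1$ parts), and both inclusions are quasi-equivalences: they are full and faithful by construction, and essentially surjective up to homotopy because $\phi_X\colon\widetilde X\to\widetilde X'$ is a homotopy equivalence in $\cA\text{-mod}$, hence an isomorphism in $H^0$, identifying $(X,0)$ with $(X,1)$.

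Finally I would note that this already handles the comparison between any two h-projective choices, and dually any two h-injective choices; to compare an h-projective family with an h-injective one, pick for each $X$ a quasi-isomorphism $\widetilde X^{\mathrm{proj}}\to\widetilde X^{\mathrm{inj}}$ (which exists since the source is h-projective or the target h-injective), and run the identical pair-of-objects construction — the map $\widetilde X^{\mathrm{proj}}\to\widetilde X^{\mathrm{inj}}$ need not be a homotopy equivalence, but it is a quasi-isomorphism between complexes that are both "fibrant-cofibrant enough" that the induced map on $\Hom_{\cA}$-complexes into/out of any h-projective or h-injective module is a quasi-isomorphism, so $(X,0)$ and $(X,1)$ become isomorphic in $H^0(\cC)$ and the inclusions remain quasi-equivalences.

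\textbf{Main obstacle.} The genuinely delicate point is the last one: verifying that replacing a resolution along a mere quasi-isomorphism (not a homotopy equivalence) still induces quasi-isomorphisms on all the relevant $\Hom_{\cA}(-,-)$ complexes, so that the "bridge" category $\cC$ has the two copies of $S$ isomorphic in cohomology. For h-projective-to-h-projective (or h-injective-to-h-injective) this is automatic because the comparison maps are actual homotopy equivalences; the mixed case needs the observation that $\Hom_{\cA}(P,-)$ preserves quasi-isomorphisms when $P$ is h-projective and $\Hom_{\cA}(-,I)$ preserves quasi-isomorphisms when $I$ is h-injective, applied to the relevant slots. Everything else is bookkeeping with the bimodule-style composition law, which is routine.
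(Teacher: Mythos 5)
Your argument is correct in the two ``same-type'' cases (both families h-projective, or both h-injective), because there any quasi-isomorphism $\phi_X\colon\widetilde X\to\widetilde X'$ between resolutions of the same type is automatically a homotopy equivalence, so $(X,0)\cong(X,1)$ in $H^0(\cC)$ and the two full subcategory inclusions are quasi-equivalences. But the mixed case is where the real content lies, and there your bridge category $\cC$ does \emph{not} work: the patch you offer in the last paragraph is false. For the inclusion of, say, the h-projective side into $\cC$ to be a quasi-equivalence you need $(X,0)\cong(X,1)$ in $H^0(\cC)$, which (by Yoneda inside $\cC$) would force every map $\Hom_{\cC}((X,1),Z)\to\Hom_{\cC}((X,0),Z)$, $Z\in\cC$, to be a quasi-isomorphism. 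When $Z=(V,0)$ is h-projective this is the map $\Hom_{\cA}(\widetilde X^{\mathrm{inj}},\widetilde V^{\mathrm{proj}})\to\Hom_{\cA}(\widetilde X^{\mathrm{proj}},\widetilde V^{\mathrm{proj}})$, whose cone is $\Hom_{\cA}(\Cone(\phi_X),\widetilde V^{\mathrm{proj}})[\pm1]$; here $\Cone(\phi_X)$ is acyclic but neither h-projective (it has the h-injective $\widetilde X^{\mathrm{inj}}$ as a summand in the underlying graded module) nor mapped into an h-injective, so there is no reason for this to be acyclic. Concretely, take $\cA=k[x]$, $S=\{k\}$, $\widetilde X^{\mathrm{proj}}=(\cA\xrightarrow{x}\cA)$ the free resolution and $\widetilde X^{\mathrm{inj}}$ an injective coresolution of $k$: every term of $\widetilde X^{\mathrm{inj}}$ is $x$-divisible torsion, so $\Hom_{\cA}(\widetilde X^{\mathrm{inj}},\widetilde X^{\mathrm{proj}})=0$, whereas $\Hom_{\cA}(\widetilde X^{\mathrm{proj}},\widetilde X^{\mathrm{proj}})\simeq\bR\End(k)\ne0$. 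Thus $(X,1)$ admits no map at all to $(X,0)$ in $H^0(\cC)$, the inclusion of the h-projective side is not essentially surjective on $H^0$, and $\cC$ is not a zig-zag witness to the quasi-equivalence.

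The paper gets around exactly this point by a different bridge. Instead of pairing the two resolutions naively, it chooses quasi-isomorphisms $\alpha_X\colon X\to\Psi(X)$ (possible precisely under the hypothesis that $S_1$ is h-projective or $S_2$ is h-injective) and defines $\Hom_{\widetilde S}(X,Y)$ to be the \emph{lower-triangular} subcomplex of $\Hom_{\cA}(\Cone(\alpha_X),\Cone(\alpha_Y))$. The two projections $\pi_1,\pi_2$ then have kernels identified with $\Hom_{\cA}(\Cone(\alpha_X),\Psi(Y))$ and $\Hom_{\cA}(X[1],\Cone(\alpha_Y))$ respectively, which \emph{are} acyclic under the stated hypothesis; the dangerous Hom-direction (from an h-injective resolution into an h-projective one) simply never occurs. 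So the fix is not to compare the two resolutions head-on inside $\cA\text{-mod}$, but to compare each of them to the mapping cone, where the relevant acyclicity is controlled by the assumption. You would need to replace your $\cC$ by this cone construction (or something equivalent, e.g.\ a twisted-complex or ``inserted homotopy'' category built from the $\alpha_X$) to make the mixed case go through.
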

\begin{proof}Let $S_1,S_2\subset \cA\text{-mod}$ be two full DG subcategories, such that for $i=1,2$ we have either $S_i\subset \text{h-proj}(\cA)$ or $S_i\subset \text{h-inj}(\cA).$ Moreover,
let $\Psi:Ob(S_1)\stackrel{\sim}{\to} Ob(S_2)$ be a bijection such that for each $X\in Ob(S_1)$ the object $\Psi(X)$ is quasi-isomorphic to $X.$

We may and will assume that either $S_1\subset \text{h-proj}(\cA),$ or $S_2\subset \text{h-inj}(\cA).$ Then we may and will choose quasi-isomorphisms
\begin{equation}\alpha_X:X\to \Psi(X),\quad X\in S_1.\end{equation}
Let $\widetilde{S}$ be a DG category, defined as follows. First, $Ob(\widetilde{S})=Ob(S_1).$ Further, define
\begin{equation}\Hom_{\widetilde{S}}(X,Y)\subset \Hom_{\cA}(\Cone(\alpha_X),\Cone(\alpha_Y))\end{equation}
to be the subcomplex which consists of morphisms mapping $\Psi(X)$ to $\Psi(Y).$ Clearly, $\widetilde{S}$ is a well-defined DG category. Further,
we have obvious projection DG functors
\begin{equation}\pi_1:\widetilde{S}\to S_1,\quad \pi_2:\widetilde{S}\to S_2.\end{equation}
We claim that both $\pi_1$ and $\pi_2$ are quasi-equivalences. Indeed, by our assumption, for any objects $X\in S_1,$ $Y\in S_2$ we have that the complexes
\begin{equation}\Hom_{\cA}(X,\Cone(\alpha_Y)),\quad \Hom_{\cA}(\Cone(\alpha_X),Y)\end{equation}
are acyclic. Therefore, the maps
\begin{equation}\pi_i:\Hom_{\widetilde{S}}(X,Y)\to \Hom_{S_i}(\pi_i(X),\pi_i(Y))\end{equation} are surjective with acyclic kernels, hence quasi-isomorphisms.

Lemma is proved.
\end{proof}

We may consider $S$ as an object of $D(\cA\otimes \cB_S^{op}).$ Namely, we put
\begin{equation}S(U\otimes X)=\widetilde{X}(U),\quad U\in Ob(\cA),\quad X\in Ob(\cB_S)=Ob(S).\end{equation}

Take some object $Q\in (\cA\otimes \cB_S^{op})\text{-mod},$ with an isomorphism $Q\cong S$ in $D(\cA\otimes \cB_S^{op}),$ such that all DG $\cB_S^{op}$-modules
\begin{equation}Q(U,-)\in \cB_S^{op}\text{-mod},\quad U\in \cA,\end{equation}
are h-projective (resp. h-injective). For instance, we can take $Q$ to be h-projective (resp. h-injective) itself. Further, define DG category $\widehat{\cA}_S$ as follows:
\begin{equation}Ob(\widehat{\cA}_S):=Ob(\cA),\quad \Hom_{\widehat{\cA}_S}(X,Y):=\Hom_{\cB_S^{op}}(Q(Y,-),Q(X,-)).\end{equation}

\begin{prop}\label{well-def} 1) The DG category $\widehat{\cA}_S$ is well defined up to a natural isomorphism in $\Ho(\dgcat_k).$

2) Moreover, if two subcategories $S_1,S_2\subset D(\cA)$ split-generate each other,
then we have a natural isomorphism \begin{equation}\widehat{\cA}_{S_1}\cong\widehat{\cA}_{S_2}\text{ in }\Ho(\dgcat_k).\end{equation}
\end{prop}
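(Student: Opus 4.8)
The plan is to reduce everything to a statement about DG modules and then invoke the invariance results already established, namely Lemma~\ref{End(S)} and the well-definedness of $\widehat{\cA}_S$ via a choice of resolution $Q$ of $S$. The key observation is that the DG category $\widehat{\cA}_S$ depends, up to quasi-equivalence, only on the DG module $S\in D(\cA\otimes\cB_S^{op})$ together with the identification $Ob(\widehat{\cA}_S)=Ob(\cA)$; and the assignment $X\mapsto Q(X,-)$ realizes $\widehat{\cA}_S^{op}$ inside $\cB_S^{op}\text{-mod}$. So the strategy for part 1) is: given two admissible choices $Q,Q'$ (both isomorphic to $S$ in the derived category, both with the relevant h-projective or h-injective fibers), produce a third DG category mapping quasi-equivalently to both, using the same cone trick as in the proof of Lemma~\ref{End(S)} but now in the category $(\cA\otimes\cB_S^{op})\text{-mod}$. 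Concretely, one chooses a quasi-isomorphism $Q\to Q'$ (arranging, by replacing one of them, that this exists in the DG sense and not just in the homotopy category), forms cones over this map, and defines an intermediate DG category whose $\Hom$ complexes are subcomplexes of $\Hom_{\cB_S^{op}}(\Cone(\dots),\Cone(\dots))$ consisting of morphisms respecting the sub/quotient structure; the h-projectivity (resp.\ h-injectivity) of the fibers forces the relevant mixed $\Hom$ complexes to be acyclic, hence the projections are quasi-equivalences. Since quasi-equivalences are the weak equivalences in $\dgcat_k$, this yields a canonical isomorphism in $\Ho(\dgcat_k)$. One also has to check that $\widehat{\cA}_S$ does not depend on the auxiliary choices hidden in $\cB_S$: but Lemma~\ref{End(S)} already tells us $\cB_S$ is well-defined up to quasi-equivalence, and a quasi-equivalence $\cB_S\to\cB_S'$ induces a Morita-style comparison of module categories under which $S$ and the corresponding $Q$'s match up, so the two resulting $\widehat{\cA}_S$'s agree in $\Ho(\dgcat_k)$.

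For part 2), suppose $S_1,S_2\subset D(\cA)$ split-generate each other. First reduce to a comparison of $\cB_{S_1}$ and $\cB_{S_2}$, or rather of their module categories. The point is that split-generation is exactly the condition under which the derived categories $D(\cB_{S_1})$ and $D(\cB_{S_2})$ are equivalent: more precisely, the thick subcategory $\cT\subset D(\cA)$ split-generated by $S_1$ equals that split-generated by $S_2$, and $\Perf(\cB_{S_i})$ is identified with $\cT$ via $X\mapsto$ (the $\cA$-module $X$) — the object $S_i$ being the corresponding $\cA$-$\cB_{S_i}$-bimodule implementing this. So there is a $\cB_{S_1}$-$\cB_{S_2}$-bimodule (a "transition" bimodule, built from the $S_i$ and a resolution) inducing an equivalence $D(\cB_{S_1})\isomoto D(\cB_{S_2})$ matching $S_1$ with $S_2$ as bimodules over $\cA$ on the left. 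Under this equivalence the two definitions $\Hom_{\widehat{\cA}_{S_i}}(X,Y)=\Hom_{\cB_{S_i}^{op}}(Q_i(Y,-),Q_i(X,-))$ become manifestly identified, because the derived $\Hom$ from $Q_i(Y,-)$ to $Q_i(X,-)$ in $D(\cB_{S_i}^{op})$ is a Morita invariant and is carried to the analogous $\Hom$ in $D(\cB_{S_2}^{op})$. To turn this derived-category identification into an honest isomorphism in $\Ho(\dgcat_k)$, one again uses the freedom from part 1) to choose convenient strict models $Q_i$ (h-projective, say) so that the comparison is realized by an actual DG functor, or at worst by a zig-zag of quasi-equivalences through an intermediate DG category built as above.

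The main obstacle, and the step I would spend the most care on, is the second half of part 2): passing from the Morita-type equivalence $D(\cB_{S_1})\simeq D(\cB_{S_2})$ (which is clean at the level of triangulated or even DG-enhanced categories) to a genuine arrow in $\Ho(\dgcat_k)$ between the small DG categories $\widehat{\cA}_{S_1}$ and $\widehat{\cA}_{S_2}$, compatibly with the fixed bijection $Ob(\widehat{\cA}_{S_i})=Ob(\cA)$ on both sides. One has to make sure the equivalence can be chosen to fix the objects coming from $\cA$ on the nose and to commute with the bimodule structures, which is a matter of choosing the transition bimodule carefully and then, as in Lemma~\ref{End(S)}, of threading through an explicit intermediate DG category whose $\Hom$-complexes are cut out by cone constructions; verifying the acyclicity of the off-diagonal $\Hom$-complexes there (using h-projectivity/h-injectivity of the fibers of the $Q_i$) is the technical heart. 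Everything else — the reduction steps, the invariance of $\cB_S$, the identification of $\Perf(\cB_{S_i})$ with the common thick subcategory — is routine given the machinery already set up in the excerpt.
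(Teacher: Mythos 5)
Your proposal follows essentially the same route as the paper's proof: part 1) via the cone-zig-zag from Lemma~\ref{End(S)} applied to a quasi-isomorphism between two admissible models $Q$, and part 2) via a $\cB_{S_1}$-$\cB_{S_2}$-bimodule inducing a derived Morita equivalence that carries $Q_1$ to $Q_2$ as $\cA$-linear bimodules (the paper realizes this as the explicit bimodule $M(U\otimes V)=\Hom_{\cA^{op}}(Q_1(-,U),Q_2(-,V))$ and checks the matching by showing the evaluation morphism $Q_1\otimes_{\cB_{S_1}}M\to Q_2$ is an isomorphism, using that $\ev_N$ is an isomorphism on $S_1$ and hence on the split-generated closure). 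You correctly flag the strictification to $\Ho(\dgcat_k)$ and the compatibility of the transition bimodule with the $\cA$-action as the technical crux; these are exactly the points the paper addresses with the explicit formula for $M$ and a final appeal (implicitly) to the part-1 zig-zag machinery.
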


\begin{proof}Statement 1) almost follows from Lemma \ref{End(S)}.
Indeed, let $Q_1.Q_2\in (\cA\otimes \cB_S^{op})\text{-mod}$ be objects which are both quasi-isomorphic to $S,$ $Q_1$ is h-projective,
and $Q_2$ satisfies the assumptions for $Q$ above. Then we have a natural (up to homotopy) quasi-isomorphism $\alpha:Q_1\to Q_2,$ and we can repeat the proof of
Lemma \ref{End(S)}.

Now we prove 2). Let $Q_i\in (\cA\otimes \cB_{S_i}^{op})\text{-mod}$ be h-projective resolutions of $S_i$ for $i=1,2.$ Further,
define the bimodule $M\in D(\cB_{S_1}\otimes \cB_{S_2}^{op})$ by the formula
\begin{equation}M(U\otimes V):=\Hom_{\cA^{op}}(Q_1(-,U),Q_2(-,V)),\quad U\in \cB_{S_1}, V\in \cB_{S_2}.\end{equation}
Since $S_1$ and $S_2$ split-generate each other, we have that the bimodule $M$ induces an equivalence
\begin{equation}-\stackrel{\bL}{\otimes}_{\cB_{S_1}}M:D(\cB_{S_1}^{op})\to D(\cB_{S_2}^{op}).\end{equation}
Further, we have natural evaluation morphism
\begin{equation}\label{Q_1_to_Q_2}Q_1\stackrel{\bL}{\otimes}_{\cB_{S_1}}M=Q_1\otimes_{\cB_{S_1}}M\to Q_2\text{ in }D(\cB_{S_2}^{op}).\end{equation}
We claim that this is an isomorphism. Before we prove this, we note that this would finish the proof of part 2) of Proposition.

Now, note that for each $N\in D(\cA),$ we have evaluation morphism
\begin{equation}\ev_N:Q_1\stackrel{\bL}{\otimes}_{\cB_{S_1}}\bR\Hom_{\cA^{op}}(Q_1,N)=Q_1\otimes_{\cB_{S_1}}\Hom_{\cA^{op}}(Q_1,N)\to N\text{ in }D(\cA).\end{equation}
Note that $\ev_N$ is an isomorphism for $N\in S_1.$ But $S_1$ split-generates $S_2.$ Therefore, $\ev_N$ is an isomorphism for each object of $S_2.$ Hence, the map
\eqref{Q_1_to_Q_2} is an isomorphism. Proposition is proved.
\end{proof}

Note that we have a natural DG functor $\iota_S:\cA\to \widehat{\cA}_S,$ which is identity on objects. It is easily seen from the proof of Proposition \ref{well-def} 2) that in the situation of Proposition \ref{well-def} 2), we have a commutative diagram in $\Ho(\dgcat):$
\begin{equation}
\begin{CD}
\cA @>\iota_{S_1}>> \widehat{\cA}_{S_1}\\
@V\id VV                              @V\cong VV\\
\cA @>\iota_{S_2}>> \widehat{\cA}_{S_2}.
\end{CD}
\end{equation}

Therefore, for any full thick essentially small triangulated subcategory $\cT\subset D(\cA)$ we have a naturally defined (up to quasi-equivalence) DG category $\widehat{\cA}_{\cT},$ together
with a morphism $\iota_{\cT}:\cA\to \widehat{\cA}_{\cT}.$ More precisely,  one can choose any small subcategory $S\subset \cT,$ which generates $\cT,$ and put
\begin{equation}\widehat{\cA}_{\cT}:=\widehat{\cA}_S,\quad \iota_{\cT}:=\iota_S.\end{equation}

\begin{defi}For any small DG category $\cA\in\dgcat_k,$ and any full thick essentially small subcategory $\cT\subset D(\cA),$ we call the DG category
$\widehat{\cA}_{\cT}$ "derived double centralizer of $\cT$".\end{defi}

Next Proposition shows that the introduced notion of formal completion is Morita invariant.

\begin{prop}\label{Morita_invar}Suppose that DG categories $\cA_1$ and $\cA_2$ are Morita equivalent. Let $\cT_1\subset D(\cA_1),$ $\cT_2\subset D(\cA_2)$ be full thick essentially small triangulated
subcategories, which correspond to each other under the equivalence $D(\cA_1)\cong D(\cA_2).$ Then the DG categories $\widehat{\cA_1}_{\cT_1}$ and $\widehat{\cA_2}_{\cT_2}$ are also Morita equivalent, and we have commutative diagram
\begin{equation}
\begin{CD}
D(\cA_1) @>\bL\iota_{\cT_1}^*>> D(\widehat{\cA_1}_{\cT_1})\\
@V\cong VV                              @V\cong VV\\
D(\cA_2) @>\bL\iota_{\cT_2}^*>> D(\widehat{\cA_2}_{\cT_2}).
\end{CD}
\end{equation}
\end{prop}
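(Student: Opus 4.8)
The plan is to reduce everything to the case of DG algebras, where the double centralizer construction is explicit, and then track the Morita bimodule through the construction. First I would use Lemma \ref{replace_by_alg} (or rather the general Morita theory recalled above) to choose compact generators $G_i\in\Perf(\cA_i)$ and replace each $\cA_i$ by the DG algebra $\End_{\cA_i}(\wt G_i)$ without loss of generality; since the statement is about Morita-equivalent categories and both $\widehat{\cA_i}_{\cT_i}$ and the functors $\bL\iota_{\cT_i}^*$ are defined purely in terms of the derived module categories, it suffices to prove the claim when $\cA_1,\cA_2$ are DG algebras and the equivalence $D(\cA_1)\cong D(\cA_2)$ is given by tensoring with an invertible bimodule $P\in D(\cA_1^{op}\otimes\cA_2)$, i.e.\ a two-sided tilting complex.

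Next I would exhibit a small subcategory $S_1\subset\cT_1$ generating $\cT_1$, and let $S_2\subset\cT_2$ be its image under the equivalence; by Proposition \ref{well-def} it does not matter which generating subcategory we pick. Since $S_2$ is obtained from $S_1$ by applying $-\stackrel{\bL}{\otimes}_{\cA_1}P$, the ``small endomorphism DG categories'' $\cB_{S_1}$ and $\cB_{S_2}$ from \eqref{B_S} are quasi-equivalent: an h-projective model of $X\stackrel{\bL}{\otimes}_{\cA_1}P$ for $X\in S_1$ computes the same $\bR\Hom$'s, so $\cB_{S_1}\simeq\cB_{S_2}=:\cB$ in $\Ho(\dgcat_k)$. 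Now fix h-projective (over $\cA_i\otimes\cB^{op}$) resolutions $Q_i$ of $S_i$ as in the definition; the relation $S_2\cong S_1\stackrel{\bL}{\otimes}_{\cA_1}P$ upgrades to an isomorphism $Q_2\cong P^{op}\stackrel{\bL}{\otimes}_{\cA_1}Q_1$ in $D(\cA_2\otimes\cB^{op})$, where $P$ is viewed as an $(\cA_1,\cA_2)$-bimodule. Feeding this into the formula $\Hom_{\widehat{\cA_i}_{S_i}}(X,Y)=\Hom_{\cB^{op}}(Q_i(Y,-),Q_i(X,-))$ and using that $P$ is a two-sided tilting complex (so $-\stackrel{\bL}{\otimes}_{\cA_1}P$ is fully faithful on the relevant modules), one gets a chain of quasi-isomorphisms identifying $\widehat{\cA_1}_{\cT_1}\stackrel{\bL}{\otimes}P$-twisted Homs with those of $\widehat{\cA_2}_{\cT_2}$; more precisely $\widehat{\cA_1}_{\cT_1}$ and $\widehat{\cA_2}_{\cT_2}$ become Morita equivalent via the bimodule $\bR\Hom_{\cB^{op}}(Q_2,Q_1)$ (or equivalently via $P$ itself, transported along $\iota$).

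For the commutativity of the square, I would check that under the identification just constructed the functor $\bL\iota_{\cT_2}^*$ corresponds to $\bL\iota_{\cT_1}^*$ composed with the equivalences on both sides. This is a matter of unwinding definitions: $\iota_{\cT_i}:\cA_i\to\widehat{\cA_i}_{\cT_i}$ is the identity on objects and sends a morphism to the map it induces on the $Q_i(-,-)$'s, and the isomorphism $Q_2\cong P^{op}\stackrel{\bL}{\otimes}_{\cA_1}Q_1$ is compatible with the $\cA_i$-actions by construction. So the two composites $D(\cA_1)\to D(\widehat{\cA_2}_{\cT_2})$ agree up to natural isomorphism of DG bimodules, which is exactly the asserted commutative diagram in $\Ho$.

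The main obstacle I anticipate is bookkeeping the (co)fibrancy/h-projectivity conditions needed to make all the $\stackrel{\bL}{\otimes}$'s and $\bR\Hom$'s literal (non-derived) on the chosen models simultaneously, together with checking that the various ``natural up to homotopy'' isomorphisms assemble into a genuine morphism in $\Ho(\dgcat_k)$ — the same kind of delicacy already handled in the proof of Proposition \ref{well-def}. Reducing to DG algebras at the outset is precisely what tames this: over an algebra the double centralizer is $\bR\End_{\cB^{op}}(Q)^{op}$ for a single bimodule $Q$, and the tilting complex $P$ intertwines the whole picture in one step. Aside from that, every ingredient is either in the excerpt (Lemma \ref{replace_by_alg}, Proposition \ref{well-def}, Lemma \ref{End(S)}) or standard Morita theory for DG algebras.
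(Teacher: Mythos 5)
Your first step has a genuine problem. You propose to ``replace each $\cA_i$ by the DG algebra $\End_{\cA_i}(\wt G_i)$ without loss of generality'' because the double centralizer and $\bL\iota^*$ are ``defined purely in terms of the derived module categories.'' But that is precisely the Morita invariance this proposition is asserting; invoking it to justify the replacement is circular. Moreover, Lemma \ref{replace_by_alg} only applies to \emph{smooth} DG categories, which is not assumed here, and a general small DG category $\cA$ need not have a single compact generator at all (the direct sum of all representables need not be compact), so $D(\cA)$ cannot always be presented as $D$ of a DG algebra. Passing from $\cA$ to $\End(G)$ for a compact generator $G$ is itself a Morita equivalence, not a quasi-equivalence, so it does not come ``for free.'' The good news is that this reduction is unnecessary: the rest of your argument never really uses that $\cA_i$ is an algebra rather than a small DG category, since the Morita bimodule $P\in D(\cA_1^{op}\otimes\cA_2)$ and the formula $\Hom_{\widehat{\cA_i}_{S_i}}=\Hom_{\cB^{op}}(Q_i,Q_i)$ make sense at that generality.

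Comparing with the paper's route: the paper avoids your comparison ``$\cB_{S_1}\simeq\cB_{S_2}=:\cB$'' entirely. After choosing h-projective models it embeds $\cA_1$ as a full DG subcategory of $\text{h-proj}(\cA_2)$ (via $U\mapsto M(U,-)$), then defines $S_1\in D(\cA_1\otimes\cB_{S_2}^{op})$ directly by $S_1(U,X)=\Hom_{\cA_2}(U,\wt X)$, using the \emph{same} $\cB_{S_2}$ on both sides. This way $\widehat{\cA_1}_{S_1}$ and $\widehat{\cA_2}_{S_2}$ both appear as endomorphism DG categories of $\cB_{S_2}^{op}$-modules $Q_1,Q_2$, and the intertwining bimodule is just $\widehat M(U,V)=\Hom_{\cB_{S_2}^{op}}(Q_1(U,-),Q_2(V,-))$. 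Your approach of separately forming $\cB_{S_1}$ and $\cB_{S_2}$ and then identifying them via Lemma \ref{End(S)}, and expressing $Q_2$ as a base change of $Q_1$ along $P$, is morally the same but adds an extra layer of ``natural up to homotopy'' identifications (precisely the bookkeeping you flag as the main obstacle). The paper's choice to work over a single $\cB$ from the outset is what tames it. Once you drop the unjustified reduction to algebras and adopt the one-$\cB$ trick, your argument lines up with the paper's.
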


\begin{proof}We may and will assume that $\cA_i\in\dgcat_k$ are h-projective DG categories. Let $M\in D(\cA_1^{op}\otimes \cA_2)$ be a bimodule which defines an equivalence
\begin{equation}\label{otimes_M}-\stackrel{\bL}{\otimes}_{\cA_1}M:D(\cA_1)\to D(\cA_2).\end{equation}
Then the category $D(\cA_2)$ is compactly generated by the set of objects $\{M(U,-)\in D(\cA_2),\, U\in \cA_1\}.$ Thus, we may assume that $\cA_1\subset \text{h-proj}(\cA_2),$
and the quasi-inverse to \eqref{otimes_M} is given by the formula
\begin{equation}F:D(\cA_2)\to D(\cA_1),\quad F(X)(U)=\Hom_{\cA_2^{op}}(U,X).\end{equation}
Now choose any small subcategory $S_2\subset\cT_2,$ which split-generates $\cT_2,$ and choose h-projective resolution $\widetilde{X}\to X$ of each object $X\in S_2.$
This choice defines the DG category $\cB_{S_2},$ and the bimodule $S_2\in D(\cA_2\otimes\cB_{S_2}^{op}).$ Now, a choice of an h-projective resolution $Q_2\to S_2$ in
$(\cA_2\otimes\cB_{S_2}^{op})\text{-mod}$ defines the DG category $\widehat{\cA_2}_{S_2}\cong \widehat{\cA_2}_{\cT_2}.$ Now, define the bimodule
$S_1\in D(\cA_1\otimes \cB_{S_2}^{op})$ by the formula
\begin{equation}S_1(U,X):=\Hom_{\cA_2}(U,\widetilde{X}),\quad U\in\cA_1,X\in S_2.\end{equation}
Choose an h-projective resolution $Q_1\to S_1.$ It defines the DG category $\widehat{\cA_1}_{S_1}\cong \widehat{\cA_1}_{\cT_1}.$ Define
the DG bimodule $\widehat{M}\in D(\widehat{\cA_1}_{S_1}^{op}\otimes \widehat{\cA_2}_{S_2})$ by the formula
\begin{equation}\label{hat_M}\widehat{M}(U,V)=\Hom_{\cB_{S_2}^{op}}(Q_1(U,-),Q_2(V,-)),\quad U\in \widehat{\cA_1}_{S_1}^{op}, V\in \widehat{\cA_2}_{S_2}.\end{equation}
Since $\cA_1$ and $\cA_2$ split-generate each other in $D(\cA_2),$ we have that the functor \eqref{hat_M} is an equivalence.
It is straightforward to show that the following diagram commutes up to a natural isomorphism
\begin{equation}
\begin{CD}
D(\cA_1) @>\bL\iota_{S_1}^*>> D(\widehat{\cA_1}_{S_1})\\
@V-\stackrel{\bL}{\otimes}_{\cA_1}M VV     @VV-\stackrel{\bL}{\otimes}_{\widehat{\cA_1}_{S_1}}\widehat{M} V\\
D(\cA_2) @>\bL\iota_{S_2}^*>> D(\widehat{\cA_2}_{S_2}).
\end{CD}
\end{equation}\end{proof}

Now we introduce the main notion of the paper.

\begin{defi}\label{completion}Let $\cD$ be an enhanced triangulated category with infinite direct sums, which is compactly generated by a set of objects. Let $\cT\subset \cD$ be
an essentially small full thick triangulated subcategory. We define the formal completion $\widehat{\cD}_{\cT}$ of $\cD$ along $\cT,$ together with a restriction functor
$\kappa^*:\cD\to \widehat{\cD}_{\cT},$ as follows. Choosing a set of compact generators in $\cD,$ we may replace $\cD$ by $D(\cA)$ for some small DG category $\cA.$ Then put
\begin{equation}\widehat{\cD}_{\cT}:=D(\widehat{\cA}_{\cT}),\quad \kappa^*:=\bL\iota_{\cT}^*:\cD=D(\cA)\to D(\widehat{\cA}_{\cT})=\widehat{\cD}_{\cT}.\end{equation}\end{defi}

\begin{theo} In the notation of Definition \ref{completion}, the category $\widehat{\cD}_{\cT}$ is well-defined up to an equivalence, compatible with the functor
$\kappa^*:\cD\to \widehat{\cD}_{\cT}.$

The category $\widehat{\cD}_{\cT}$ is enhanced, admits infinite direct sums, and is compactly generated by a set of objects.
The functor $\kappa^*$ commutes with infinite direct sums and preserves compact objects. If $S\subset Ob(\cD)$ is a set of compact generators,
then $\kappa^*(S)$ is a set of compact generators in $\widehat{\cD}_{\cT}.$
\end{theo}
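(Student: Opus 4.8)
The plan is to deduce every assertion from the standard properties of derived categories of small DG categories, using Lemma \ref{End(S)}, Proposition \ref{well-def} and Proposition \ref{Morita_invar} to absorb the choices made in Definition \ref{completion}.

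First I would fix one presentation $\cD\simeq D(\cA)$ with $Ob(\cA)$ a set of compact generators. Then $\widehat{\cA}_{\cT}$ is again a small DG category, with $Ob(\widehat{\cA}_{\cT})=Ob(\cA)$, and $\iota_{\cT}:\cA\to\widehat{\cA}_{\cT}$ is the identity on objects. Hence $\widehat{\cD}_{\cT}=D(\widehat{\cA}_{\cT})$ is canonically enhanced (by $\text{h-proj}(\widehat{\cA}_{\cT})$), admits arbitrary direct sums, is compactly generated by the set of representable modules $\{h_Y:Y\in Ob(\widehat{\cA}_{\cT})\}$, and satisfies $D(\widehat{\cA}_{\cT})^c=\Perf(\widehat{\cA}_{\cT})$; this is exactly the structure recalled at the beginning of Section \ref{definition}.

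Next I would examine the functor $\kappa^*=\bL\iota_{\cT}^*$, i.e.\ extension of scalars along $\iota_{\cT}$. It is a left adjoint, with right adjoint the restriction-of-scalars functor $\iota_{\cT *}:D(\widehat{\cA}_{\cT})\to D(\cA)$, so $\kappa^*$ commutes with arbitrary direct sums. Since $\iota_{\cT *}$ is computed on underlying complexes it commutes with direct sums as well, so for $C$ compact the adjunction computation $\Hom(\kappa^* C,\bigoplus_i M_i)\cong\Hom(C,\bigoplus_i\iota_{\cT *}M_i)\cong\bigoplus_i\Hom(\kappa^* C,M_i)$ shows $\kappa^*$ preserves compactness. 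Because representable modules are h-projective, $\kappa^*$ sends $h_X$ to the representable $\widehat{\cA}_{\cT}$-module on the same object for each $X\in Ob(\cA)$, which (as $\iota_{\cT}$ is a bijection on objects) already shows that $\kappa^*$ carries the standard set of compact generators of $D(\cA)$ onto that of $D(\widehat{\cA}_{\cT})$. For an \emph{arbitrary} set $S\subset Ob(\cD)$ of compact generators I would argue directly: each element of $\kappa^*(S)$ is compact by the above, and if $M\in\widehat{\cD}_{\cT}$ satisfies $\Hom(\kappa^*G[n],M)=0$ for all $G\in S$ and all $n\in\Z$, then $\Hom(G[n],\iota_{\cT *}M)=0$ for all such $G,n$, whence $\iota_{\cT *}M=0$ because $S$ generates $\cD$; but $\iota_{\cT *}$ is conservative (since $\iota_{\cT}$ is the identity on objects, $\iota_{\cT *}M$ has the same underlying complexes as $M$), so $M=0$. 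Thus $\kappa^*(S)$ is a set of compact generators of $\widehat{\cD}_{\cT}$.

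Finally, for well-definedness: any two presentations $\cD\simeq D(\cA_1)$ and $\cD\simeq D(\cA_2)$ make $\cA_1$ and $\cA_2$ Morita equivalent, via an equivalence $D(\cA_1)\simeq D(\cA_2)$ under which $\cT$ corresponds to triangulated subcategories $\cT_1\subset D(\cA_1)$, $\cT_2\subset D(\cA_2)$; Proposition \ref{Morita_invar} then provides an equivalence $D(\widehat{\cA_1}_{\cT_1})\simeq D(\widehat{\cA_2}_{\cT_2})$ sitting in a commutative square with $\bL\iota_{\cT_1}^*$ and $\bL\iota_{\cT_2}^*$, which is precisely the compatibility with $\kappa^*$ that is claimed; together with Proposition \ref{well-def} (independence of the auxiliary resolution choices entering $\widehat{\cA}_{\cT}$, and of the generating subcategory chosen inside $\cT$) this finishes the proof. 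I expect this last step to be the only part demanding genuine care — one must check that the equivalence produced by Proposition \ref{Morita_invar} is natural enough and compatible with $\kappa^*$ in the required sense, and keep track of the h-projective/h-injective resolution choices — whereas all the remaining assertions are formal consequences of the standard theory of derived categories of DG categories.
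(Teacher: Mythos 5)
Your proposal is correct and follows essentially the same route as the paper: the paper also derives well-definedness from Proposition \ref{Morita_invar} (different sets of compact generators give Morita equivalent DG categories) and regards the remaining assertions as immediate from the definition, which is exactly what your detailed adjunction/conservativity argument for $\iota_{\cT*}$ makes explicit. The only difference is that you spell out the verifications the paper leaves implicit; no new ideas are introduced and no gaps appear.
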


\begin{proof}The first statement follows from Proposition \ref{Morita_invar}, since different sets of compact generators yield Morita equivalent DG categories.

The other statements follow directly from definition.\end{proof}

It is convenient to introduce one more definition.

\begin{defi}\label{compl_cpt}Let $\cD$ be an essentially small Karoubian complete enhanced triangulated category, and $\cT\subset \cD$ be a full thick triangulated subcategory. Define the formal completion $\widehat{\cD}_{\cT}$ of $\cD$ along $\cT,$ together with a restriction functor
$\kappa^*:\cD\to \widehat{\cD}_{\cT},$ as follows. Choosing a set of generators in $\cD,$ we may replace $\cD$ by $\Perf(\cA)$ for some small DG category $\cA.$ Then put
\begin{equation}\widehat{\cD}_{\cT}:=\Perf(\widehat{\cA}_{\cT}),\quad \kappa^*:=\bL\iota_{\cT}^*:\cD=\Perf(\cA)\to \Perf(\widehat{\cA}_{\cT})=\widehat{\cD}_{\cT}.\end{equation}\end{defi}

\begin{remark}If $\cD$ is a compactly generated triangulated category and $\cT\subset \cD^c$ is an essentially small full thick subcategory, then we have
\begin{equation}(\widehat{\cD}_{\cT})^c\cong \widehat{{\cD^c}}_{\cT}.\end{equation}\end{remark}

\section{Properties of categorical formal completion}
\label{properties}

In this section we study various properties of formal completions of categories along subcategories.

All categories are supposed to be enhanced. Further, by a "compactly generated triangulated category" we mean a "triangulated category with infinite direct sums,
which is compactly generated by a set of objects".

\begin{theo}\label{completion_complete}Let $\cD$ be a compactly generated triangulated category, $\cT\subset \cD$ a full thick essentially small triangulated subcategory. Assume that $\cT$ is contained in the smallest localizing subcategory of $\cD$
containing $\cT\cap\cD^c.$ Then

(i) The restriction of the functor $\kappa^*:\cD\to \widehat{\cD}_{\cT}$ on the subcategory $\cT$ is full and faithful (below we identify $\cT$ with its image under the functor $\kappa^*$).

(ii) The functor $\kappa^*:\widehat{\cD}_{\cT}\to \widehat{\widehat{\cD}_{\cT}}_{\cT}$ is an equivalence.

(iii) Let $\cT'\subset\cT$ be a full thick triangulated subcategory. Then there is a natural equivalence $\widehat{\cD}_{\cT'}\cong \widehat{\widehat{\cD}_{\cT}}_{\cT'}$
\end{theo}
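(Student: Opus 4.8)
The statement to prove is Theorem \ref{completion_complete}, with its three parts; the third, the transitivity-type equivalence $\widehat{\cD}_{\cT'}\cong\widehat{\widehat{\cD}_{\cT}}_{\cT'}$, will be deduced from (i) and (ii). The plan is to work at the DG level throughout: replace $\cD$ by $D(\cA)$ for a small h-projective DG category $\cA$ with a chosen set $S$ of compact generators, and choose a small subcategory generating $\cT$ consisting of compact objects, which is possible precisely because of the hypothesis that $\cT$ lies in the smallest localizing subcategory containing $\cT\cap\cD^c$ (so $\cT$ is generated, as a thick subcategory, by $\cT\cap\cD^c$). Writing $\cB:=\cB_S$ for the DG endomorphism category of a generating set $S\subset\cT\cap\cD^c$ and $Q$ for an h-projective resolution of the bimodule $S\in D(\cA\otimes\cB^{op})$, the completion $\widehat{\cA}_{\cT}$ is by definition $\bR\End_{\cB^{op}}(Q)$, and the functor $\bL\iota_{\cT}^*$ sends $N\in D(\cA)$ to $\bR\Hom_{\cA^{op}}(Q,N)\in D(\widehat{\cA}_{\cT})$, viewed via the evaluation $Q\otimes_{\cB}-$.

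\textbf{Step 1: part (i).} I would show that for $X,Y\in\cT$ the map $\Hom_{D(\cA)}(X,Y)\to\Hom_{\widehat{\cD}_{\cT}}(\kappa^*X,\kappa^*Y)$ is an isomorphism. The right-hand side is $\bR\Hom_{\widehat{\cA}_{\cT}}(\bR\Hom_{\cA^{op}}(Q,X),\bR\Hom_{\cA^{op}}(Q,Y))$. The key input is that the evaluation $\ev_N:Q\otimes_{\cB}^{\bL}\bR\Hom_{\cA^{op}}(Q,N)\to N$ is an isomorphism whenever $N\in\cT$ — exactly the fact established in the proof of Proposition \ref{well-def}(2), since $S$ split-generates $\cT$. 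Using this together with the adjunction between $Q\otimes_{\cB}^{\bL}-$ and $\bR\Hom_{\cA^{op}}(Q,-)$, I compute $\bR\Hom_{\widehat{\cA}_{\cT}}(\kappa^*X,\kappa^*Y)\simeq\bR\Hom_{\cA}(Q\otimes_{\cB}^{\bL}\kappa^*X,\;?\,)\simeq\bR\Hom_{\cA}(X,Y)$ when $X,Y\in\cT$. Some care is needed because $\widehat{\cA}_{\cT}$-module structure is on $\bR\Hom_{\cA^{op}}(Q,-)$, not literally $\cB$-modules; the cleanest route is to observe that restriction along $\cA\to\widehat{\cA}_{\cT}$ has left adjoint $Q\otimes_{\cB}^{\bL}-$ precomposed appropriately, and then reduce to the fully-faithful statement of the comparison between $\cT$ and its image in $D(\cB)$.

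\textbf{Step 2: part (ii), and then (iii).} For (ii) I would iterate the construction: the image $\kappa^*(S)$ generates the same thick subcategory $\cT$ inside $\widehat{\cD}_{\cT}$ (identified via part (i)), its DG endomorphism category is quasi-equivalent to $\cB$ again by part (i), and therefore the \emph{new} double centralizer $\widehat{\widehat{\cA}_{\cT}}_{\cT}$ is $\bR\End_{\cB^{op}}(Q')$ for an h-projective resolution $Q'$ of the bimodule $\widehat{\cA}_{\cT}\otimes\cB^{op}$-module $\kappa^*(S)$, which — again by the evaluation isomorphism — is identified with $\bR\Hom_{\cA^{op}}(Q,S)$ viewed as a $\widehat{\cA}_{\cT}$-$\cB^{op}$-bimodule; chasing definitions this bimodule is the diagonal, so $\bR\End$ of it over $\cB^{op}$ recovers $\widehat{\cA}_{\cT}$ up to quasi-equivalence, compatibly with $\kappa^*$. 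For (iii): choose a generating set $S'\subset\cT'\cap\cD^c$ inside the generating set $S\subset\cT\cap\cD^c$. Then $\widehat{\cD}_{\cT'}=\widehat{\cA}_{S'}$-modules and $\widehat{\widehat{\cD}_{\cT}}_{\cT'}=\widehat{(\widehat{\cA}_{\cT})}_{\cT'}$; by part (i) the DG endomorphism category of $\kappa^*(S')$ in $\widehat{\cD}_{\cT}$ is quasi-equivalent to $\cB_{S'}$, and the relevant bimodule computing the second completion is $\bR\Hom$ out of $Q_{S'}$ computed inside $D(\widehat{\cA}_{\cT}\otimes\cB_{S'}^{op})$, which via the evaluation isomorphism for objects of $\cT'\subset\cT$ coincides with the bimodule computing $\widehat{\cA}_{S'}$. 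Hence the two double centralizers agree in $\Ho(\dgcat_k)$, giving the equivalence of derived categories compatibly with restriction functors.

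\textbf{Main obstacle.} The essential subtlety — and the part I expect to take real work — is keeping track of module structures: the completion $\widehat{\cA}_{\cT}$ acts on $\bR\Hom_{\cA^{op}}(Q,-)$ on the \emph{wrong} side, and all the bimodule manipulations in Steps 1–2 must be done with h-projective/h-injective representatives so that underived $\otimes$ and $\Hom$ compute the derived functors and the various adjunction and evaluation maps are honest chain maps, not just maps in the homotopy category. The hypothesis that $\cT$ is generated by $\cT\cap\cD^c$ is used repeatedly and crucially — it is what lets us take the generating subcategory $S$ inside $\cD^c$, so that $\cB_S$ is a \emph{small} DG category and $Q\otimes_{\cB}^{\bL}-$ is well-behaved, and it is exactly what guarantees the evaluation isomorphism $\ev_N$ on all of $\cT$ rather than merely on $S$. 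Once the bookkeeping of Step 1 is in place, (ii) and (iii) are formal consequences via the same evaluation isomorphism applied one categorical level up.
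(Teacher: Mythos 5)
Your proposal contains a genuine gap at the very first step, and the gap propagates through the rest of the argument.

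You claim, in the parenthetical of your Plan, that the hypothesis that $\cT$ lies in the smallest localizing subcategory of $\cD$ containing $\cT\cap\cD^c$ implies that ``$\cT$ is generated, as a thick subcategory, by $\cT\cap\cD^c$.'' That inference is false: localizing closure is strictly larger than thick closure (it additionally allows arbitrary direct sums), so an object of $\cT$ may well fail to be a retract of a finite extension of compacts. Because of this, you cannot choose the generating set $S$ for $\cT$ inside $\cT\cap\cD^c$. If you do, $S$ only split-generates $\cT\cap\cD^c$, so by Proposition \ref{well-def}(2) the DG algebra $\widehat{\cA}_S$ you build is $\widehat{\cA}_{\cT\cap\cD^c}$, not $\widehat{\cA}_{\cT}$, and the rest of your computations are about the wrong category. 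Note also that your Step 1 adjunction chase is left with a literal ``$?$'' at its crux, and the identification $\kappa^*(-)=\bR\Hom_{\cA^{op}}(Q,-)$ needs justification since the two sides a priori live in $D(\widehat{\cA}_{\cT})$ and $D(\cB^{op})$ respectively.

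The paper avoids this entirely. It takes a small DG subcategory $\cB\subset\text{h-proj}(\cA)$ that split-generates all of $\cT$ (so $\cB$ may contain non-compact objects) and arranges that $\cB$ \emph{contains} a full DG subcategory $\cA'\subset\cA$ of compact generators of $\cT\cap\cD^c$. The key computation \eqref{nice} is then a direct Yoneda argument: for $X\in\cA$ and $X'\in\cA'\subset\cB$, the $\cB^{op}$-module $Q(X',-)$ is (quasi-iso to, and may be taken to be) the representable $\Hom_{\cB}(X',-)$, so $\Hom_{\cB^{op}}(Q(X',-),Q(X,-))\cong Q(X,X')\cong\Hom_{\cA}(X,X')$. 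This gives full-faithfulness on $\cT\cap\cD^c$. Then, because $\kappa^*$ preserves both direct sums and compacts, the isomorphism of Hom-complexes extends from $\cT\cap\cD^c$ to the smallest localizing subcategory containing it, hence (by the hypothesis) to all of $\cT$. Parts (ii) and (iii) then follow from the same isomorphisms $\bR\Hom_{\cD}(X,Y)\stackrel{\sim}{\to}\bR\Hom_{\widehat{\cD}_{\cT}}(\kappa^*X,\kappa^*Y)$ for $X\in\cD^c$, $Y\in\cT$: they show the bimodules computing the second-level completions $\widehat{\widehat{\cA}_{\cT}}_{\cT}$ and $\widehat{\widehat{\cA}_{\cT}}_{\cT'}$ agree with those computing $\widehat{\cA}_{\cT}$ and $\widehat{\cA}_{\cT'}$. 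Your plan to rederive (ii) and (iii) by iterating the Step 1 evaluation argument could in principle work, but only after the choice of $\cB$ is corrected; as written it inherits the same gap.
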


\begin{proof} We may and will assume that $\cD=D(\cA)$ for some small h-flat DG category $\cA,$ and the subcategory $\cT\cap\cD^c\subset \cD$ is split-generated by
the full DG subcategory $\cA'\subset\cA.$ Let $\cB\subset \text{h-proj}(\cA)$ be a small DG subcategory, which split-generates $\cT.$ We may and will assume that $\cA'\subset \cB.$ We have the DG bimodule $M\in D(\cA\otimes\cB^{op}),$
\begin{equation}M(U,V)=\Hom_{\cA}(U,V),\quad U\in\cA,V\in\cB.\end{equation}
Choose an h-projective resolution $Q\to M.$ It gives the DG model for $\widehat{\cA}_{\cT}:$
\begin{equation}Ob(\widehat{\cA}_{\cT})=Ob(\cA),\quad \Hom_{\widehat{\cA}_{\cT}}(X,Y)=\Hom_{\cB^{op}}(Q(Y,-),Q(X,-)).\end{equation}
For $X\in \cA,$ $X'\in \cA',$ we have the following isomorphisms in $D(k):$
\begin{multline}\label{nice}\Hom_{\widehat{\cA}_{\cT}}(X,X')\cong \Hom_{\cB^{op}}(Q(X',-),Q(X,-))\cong\\
\Hom_{\cB^{op}}(\Hom_{\cB}(X',-),\Hom_{\cA}(X,-))
\cong \Hom_{\cA}(X,X').\end{multline}

Isomorphisms \eqref{nice} imply in particular that the functor $\kappa^*$ is full and faithful on $\cT\cap\cD^c.$ Moreover, since $\kappa^*$ preserves compact objects, it is also full and faithful on the smallest localizing subcategory containing $\cT\cap\cD^c.$ In particular, by our assumption, it is full and faithful on $\cT.$ This proves (i).

Further, \eqref{nice} also implies that the maps
\begin{equation}\label{k^*}\bR\Hom_{\cD}(X,Y)\to \bR\Hom_{\widehat{\cD}_{\cT}}(\kappa^*(X),\kappa^*(Y))\end{equation}
are isomorphisms (in $D(k)$) for $X\in\cD^c,$ $Y\in\cT\cap\cD^c.$ Since $X$ and $\kappa^*(X)$ are compact, the maps
\eqref{k^*} are also isomorphisms for $Y$ in the smallest localizing subcategory containing $\cT\cap\cD^c,$ and in particular for $Y\in\cT.$ This easily implies both (ii) and (iii). Theorem is proved.
\end{proof}

\begin{prop}\label{product_gen}Let $\cA$ be a small DG category, and let $\{\cT_{\beta}\subset D(\cA)\}_{\beta\in \mathfrak{B}}$ be a (small) collection of mutually orthogonal
full thick essentially small triangulated subcategories. Denote by $\cT\subset D(\cA)$ the full thick triangulated subcategory classically generated by all $\cT_{\beta}.$
Then there is a natural isomorphism in $\Ho(\dgcat_k):$
\begin{equation}\widehat{\cA}_{\cT}\cong \prod\limits_{\beta\in\mathfrak{B}}\widehat{\cA}_{\cT_{\beta}}.\end{equation}\end{prop}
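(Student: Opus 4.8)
The plan is to produce an explicit DG model of $\widehat{\cA}_{\cT}$ and observe that it decomposes as a product. First I would reduce to a convenient situation: replace $\cA$ by an h-projective quasi-equivalent DG category, and for each $\beta$ choose a small DG subcategory $\cB_{\beta}\subset\text{h-proj}(\cA)$ which split-generates $\cT_{\beta}$. Since the $\cT_{\beta}$ are mutually orthogonal, the disjoint union $\cB:=\coprod_{\beta}\cB_{\beta}$ (with $\Hom_{\cB}(X,Y)=0$ whenever $X\in\cB_{\beta}$, $Y\in\cB_{\gamma}$, $\beta\neq\gamma$) can be arranged to split-generate $\cT$: indeed, $\bigcup_{\beta}\cB_{\beta}$ classically generates $\cT$ by definition of $\cT$, and orthogonality ensures that the Hom-complexes in $\cB$ computed inside $\cA\text{-mod}$ already vanish across different $\beta$'s up to quasi-isomorphism, so replacing them by the genuinely decomposed model changes nothing in $D(\cA\otimes\cB^{op})$.

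Next I would unwind the definition of $\widehat{\cA}_{\cT}$ using this model for $\cB$. We have the bimodule $M\in D(\cA\otimes\cB^{op})$ with $M(U,V)=\Hom_{\cA}(U,V)$, and choosing an h-projective resolution $Q\to M$ we get
\begin{equation}
\Hom_{\widehat{\cA}_{\cT}}(X,Y)=\Hom_{\cB^{op}}(Q(Y,-),Q(X,-)).
\end{equation}
The key point is that $\cB^{op}\text{-mod}$ decomposes as a product $\prod_{\beta}\cB_{\beta}^{op}\text{-mod}$ because $\cB$ is an orthogonal disjoint union; correspondingly $Q(Y,-)$ splits as the "coproduct" of its components $Q_{\beta}(Y,-)$ supported on $\cB_{\beta}$, and since there are no morphisms between modules supported on different $\cB_{\beta}$, one gets
\begin{equation}
\Hom_{\cB^{op}}(Q(Y,-),Q(X,-))\cong\prod_{\beta\in\mathfrak{B}}\Hom_{\cB_{\beta}^{op}}(Q_{\beta}(Y,-),Q_{\beta}(X,-)).
\end{equation}
The right-hand side is exactly $\prod_{\beta}\Hom_{\widehat{\cA}_{\cT_{\beta}}}(X,Y)$, provided one checks that $Q_{\beta}$ is a legitimate h-projective (or at least h-projective-in-each-$U$) resolution of the bimodule $M_{\beta}\in D(\cA\otimes\cB_{\beta}^{op})$ defining $\widehat{\cA}_{\cT_{\beta}}$; this follows since the summand of an h-projective module is h-projective and restriction to $\cB_{\beta}$ is compatible with resolutions here. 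This identification is clearly compatible with composition, so it upgrades to an isomorphism of DG categories, and hence to the asserted isomorphism in $\Ho(\dgcat_k)$.

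The main obstacle I anticipate is the bookkeeping in the reduction step: one must be careful that $\cB$ as an honestly-decomposed DG category still split-generates $\cT$ and still computes the correct derived double centralizer, i.e.\ that passing from the possibly-non-decomposed Hom-complexes (with across-$\beta$ terms that are merely acyclic) to the strictly decomposed ones does not change the quasi-equivalence class. This is where orthogonality of the $\cT_{\beta}$ is used essentially, together with Lemma \ref{End(S)} and Proposition \ref{well-def}, which guarantee that $\widehat{\cA}_{\cT}$ only depends on the split-generated subcategory and is insensitive to the precise h-projective model chosen. Once that is in place, the product decomposition of module categories over a disjoint union of DG categories is formal, and the rest is routine verification that the evident maps are mutually inverse DG functors.
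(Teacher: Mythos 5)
Your approach matches the paper's proof, which is exactly the hint ``choose the generating set $S\subset\cT$ to be a disjoint union of generating sets $S_\beta\subset\cT_\beta$'' and then observe the decomposition. However, there is a real gap in the step where you pass from $\cB_S$ to the ``genuinely decomposed model'' $\cB'$ with strictly zero cross-Hom complexes. Your $\cB'$ is not a full DG subcategory of $\cA\text{-mod}$: the actual Hom complexes $\Hom_{\cA}(\widetilde X,\widetilde Y)$ for $X\in S_\beta$, $Y\in S_\gamma$, $\beta\neq\gamma$ are acyclic by orthogonality but in general nonzero, and there is no way to choose the resolutions $\widetilde X$ so that they vanish identically. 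Lemma \ref{End(S)} and Proposition \ref{well-def} compare different choices of resolutions (or of the resolution $Q$ of the bimodule), always staying inside full DG subcategories of $\cA\text{-mod}$; they do not cover the replacement of $\cB_S$ by an abstractly quasi-equivalent DG category that lives outside this class, nor do they automatically transport the bimodule $Q$ and identify the resulting double centralizers. So the appeal to those results is not sufficient as stated, and the sentence ``replacing them by the genuinely decomposed model changes nothing in $D(\cA\otimes\cB^{op})$'' is precisely the point that needs a proof.

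The gap is fillable without ever leaving $\cB_S$. Keep the acyclic cross-Homs and instead decompose the modules. Write $i_\beta:\cB_{S_\beta}\hookrightarrow\cB_S$ for the inclusions; since for $V\in S_\gamma$ the representable $\cB_S^{op}$-module $\Hom_{\cB_S}(-,V)$ restricts to an acyclic module on $\cB_{S_\beta}$ whenever $\beta\neq\gamma$, one checks objectwise that for every $Y\in\cA$ the canonical map
\begin{equation}
\bigoplus_{\beta}\,\bL i_{\beta!}\, i_\beta^* Q(Y,-)\;\longrightarrow\; Q(Y,-)
\end{equation}
is a quasi-isomorphism in $D(\cB_S^{op})$. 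Adjunction then gives
\begin{equation}
\bR\Hom_{\cB_S^{op}}\bigl(Q(Y,-),Q(X,-)\bigr)\;\cong\;\prod_{\beta}\bR\Hom_{\cB_{S_\beta}^{op}}\bigl(i_\beta^*Q(Y,-),\,i_\beta^*Q(X,-)\bigr),
\end{equation}
which is the desired product of Hom complexes for $\widehat{\cA}_{\cT_\beta}$, compatibly with composition. This achieves exactly what you wanted the ``strict decomposition'' to deliver, but it works with the genuine $\cB_S$ and uses only the orthogonality hypothesis, so it sits squarely within the paper's framework.
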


\begin{proof}This can be easily seen if we choose generating subset $S\subset Ob(\cT)$ to be the disjoint union of generating subsets $S_{\beta}\subset Ob(\cT_{\beta}).$\end{proof}

\begin{prop}Let $\cT$ be an essentially small Karoubian complete triangulated category, and suppose that we have a semi-orthogonal decomposition
$\cT=\langle S_1,S_2\rangle,$ so that $\Hom_{\cT}(S_2,S_1)=0.$ Then we have natural equivalence $\widehat{\cT}_{S_1}\cong S_1,$ and the corresponding
functor $\kappa^*:\cT\to S_1=\widehat{\cT}_{S_1}$ is the semi-orthogonal projection.\end{prop}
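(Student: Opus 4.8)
The plan is to write down an explicit DG model for $\widehat{\cT}_{S_1}$ using the recipe of Section \ref{definition} applied to $\cT$ in place of $D(\cA)$ (after replacing $\cT$ by $\Perf(\cA)$ for a suitable small h-flat DG category $\cA$, using Definition \ref{compl_cpt}), and then identify it with $S_1$ by exploiting the semi-orthogonality $\Hom_{\cT}(S_2,S_1)=0$. Concretely, I would first choose a small DG category $\cA$ with $\Perf(\cA)\simeq\cT$; since $\cT=\langle S_1,S_2\rangle$, I may arrange that $\cA$ has a full DG subcategory $\cA_1$ split-generating $S_1$ and a full DG subcategory $\cA_2$ split-generating $S_2$, with $\Hom_{\cA}(\cA_2,\cA_1)$ acyclic (this encodes $\Hom_{\cT}(S_2,S_1)=0$). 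Take $S:=\cA_1\subset\Perf(\cA)$ as the generating subcategory of $S_1=\cT'$, so $\cB_S=\cA_1$ (up to quasi-equivalence), and form the bimodule $M\in D(\cA\otimes\cA_1^{op})$ with $M(U,V)=\Hom_{\cA}(U,V)$, with h-projective resolution $Q\to M$. Then $\widehat{\cA}_{S_1}$ has objects $Ob(\cA)$ and $\Hom_{\widehat{\cA}_{S_1}}(X,Y)=\Hom_{\cA_1^{op}}(Q(Y,-),Q(X,-))$.

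The key computation is then the analogue of \eqref{nice}: for $X,Y\in\cA$ one has
$\Hom_{\widehat{\cA}_{S_1}}(X,Y)\cong\Hom_{\cA_1^{op}}(\bR\Hom_{\cA}(Y,-)|_{\cA_1},\bR\Hom_{\cA}(X,-)|_{\cA_1})$,
i.e. the $\cA_1^{op}$-module $\Hom_{\cA}(X,-)$ restricted along $\cA_1\hookrightarrow\cA$. Now I would argue that restriction to $\cA_1$ followed by the derived Hom into $\Hom_{\cA}(X,-)|_{\cA_1}$ computes $\bR\Hom_{\cA}(X,-)$ applied to the universal arrow $X\to p(X)$, where $p\colon\cT\to S_1$ is the semi-orthogonal projection (left adjoint to the inclusion $S_1\hookrightarrow\cT$, which exists because $\cT=\langle S_1,S_2\rangle$ with $\Hom(S_2,S_1)=0$ gives, for each $X$, a triangle $X_2\to X\to X_1$ with $X_i\in S_i$, and $X\mapsto X_1$ is functorial). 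Equivalently, the evaluation $Q\otimes_{\cA_1}\Hom_{\cA}(X,-)|_{\cA_1}\to X$ in $D(\cA)$ is precisely the counit realizing the $S_1$-component, because $\cA_1$ split-generates $S_1$ and the $S_2$-part is killed by restriction to $\cA_1$. This yields $\Hom_{\widehat{\cA}_{S_1}}(X,Y)\cong\Hom_{\cT}(p(X),p(Y))$, which is exactly the Hom-complex of a DG model of $S_1$ via the projection functor; hence $\widehat{\cT}_{S_1}\cong S_1$ and the induced $\kappa^*=\bL\iota_{S_1}^*$ is the semi-orthogonal projection $p$.

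I expect the main obstacle to be the middle step: carefully justifying that $Q(X,-)|_{\cA_1}$, as a perfect $\cA_1^{op}$-module, really corepresents the $S_1$-component of $X$ — i.e. that the natural map $Q\otimes_{\cA_1}Q(X,-)|_{\cA_1}\to X$ is a triangle completion with cone in the localizing subcategory generated by $S_2$ — and that this identification is functorial in $X$ at the DG level rather than merely on homotopy categories. The cleanest route is probably to note that $\cT=\langle S_1,S_2\rangle$ lifts to a semiorthogonal decomposition of $\Perf(\cA)$, so one may take $\cA$ to be the "upper-triangular" gluing of $\cA_1$ and $\cA_2$ along a bimodule, making $\cA_1\subset\cA$ and the restriction $\cA_1^{op}\text{-mod}\leftarrow\cA\text{-mod}$ literally the projection; then \eqref{nice}-style manipulations go through verbatim with $X'$ replaced by an arbitrary object of $\cA_1$ and give $\Hom_{\widehat{\cA}_{S_1}}(X,V)\cong\Hom_{\cA}(X,V)$ for $V\in\cA_1$, together with $\Hom_{\widehat{\cA}_{S_1}}$ restricted to $\cA_1\times\cA_1$ being quasi-isomorphic to $\Hom_{\cA_1}$. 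Finiteness of $S_1$ (Karoubian completeness of $\cT$) ensures everything stays inside $\Perf$. Once that reduction is in place the remaining verifications are routine.
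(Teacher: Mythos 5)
Your proposal lands on essentially the same proof as the paper: the paper simply fixes a DG model $\cA$ for $\cT$ with $Ob(\cA)=Ob(\cA_1)\sqcup Ob(\cA_2)$ and $\Hom_{\cA}(X,Y)=0$ for $X\in\cA_2$, $Y\in\cA_1$ (the ``upper-triangular gluing'' you arrive at in your final paragraph) and then observes that the statement follows directly from the definitions, since with $S=\cA_1$ one has $\cB_S=\cA_1$, the restricted bimodule $\Hom_{\cA}(X,-)|_{\cA_1}$ vanishes for $X\in\cA_2$ and is representable for $X\in\cA_1$, so $\widehat{\cA}_{S_1}\simeq\cA_1$ (padded by zero objects) and $\bL\iota^*$ is the projection. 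Your opening general argument and the worry you flag about identifying the evaluation map as the counit are both sidestepped by making that choice of model up front, which is exactly what you propose as the ``cleanest route.''
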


\begin{proof} By Lemma \ref{replace_by_alg}, we may (and will) assume that $\cT=\Perf(\cA)$ for some small DG category $\cA,$ and $S_i\subset \cT$ is generated by DG subcategory $\cA_i\subset \cA.$

We may assume that $Ob(\cA)=Ob(\cA_1)\sqcup Ob(\cA_2).$ Further, we may assume that $\Hom_{\cA}(X,Y)=0$ for $X\in\cA_2,$ $Y\in\cA_1.$ With these assumptions, Proposition follows
directly from definitions.
\end{proof}

\begin{prop}1) Let $\cT$ be some smooth and proper pre-triangulated DG category, and $S\subset \Ho(\cT)$ a full thick triangulated subcategory. Then we have a natural
equivalence $(\widehat{\cT}_S)^{op}\cong \widehat{\cT^{op}}_{S^{op}}.$

2) If we drop the assumption of either properness or smoothness, then Proposition fails to hold.\end{prop}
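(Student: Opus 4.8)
The plan is to reduce to an explicit DG model and use finite-dimensionality (properness) to identify the double-centralizer construction with its opposite. First I would invoke Lemma \ref{replace_by_alg} and Morita invariance (Proposition \ref{Morita_invar}) to assume $\cT = \Perf(\cA)$ for a smooth and proper DG algebra $\cA$, and that $S$ is split-generated by a single object, so that (replacing $\cA$ by a suitable DG algebra via a further Morita equivalence) $S$ is generated by a DG subalgebra, or more conveniently by an object $M \in \Perf(\cA)$. The key point is that properness forces all the $\Hom$-complexes in sight to be perfect over $k$, so taking $\bR\End$ commutes with passage to opposite categories up to the canonical duality: for a perfect $k$-module $V$, $\bR\Hom_k(V,k)$ is again perfect and $V \mapsto \bR\Hom_k(V,k)$ is an (anti)equivalence on $\Perf(k)$.

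The main steps, in order: (1) Set $\cB = \bR\End_{\cT}(M)$, which by properness is a proper DG algebra, and note $\cB^{op} = \bR\End_{\cT^{op}}(M^\vee)$ where $M^\vee = \bR\Hom_k(M,k)$ is the $k$-linear dual, viewed as an object of $\Perf(\cA^{op})$; here one uses that $\cA$ proper implies $\Perf(\cA)^\vee \cong \Perf(\cA^{op})$ compatibly with the $\End$-algebras. (2) By definition $\widehat{\cT}_S = \Perf(\widehat{\cA}_S)$ with $\widehat{\cA}_S = \bR\End_{\cB^{op}}(M)^{op}$ as a DG algebra (the derived double centralizer), so $(\widehat{\cA}_S)^{op} = \bR\End_{\cB^{op}}(M)$. (3) On the other side, $\widehat{\cT^{op}}_{S^{op}}$ is the derived double centralizer of $M^\vee$ in $\cT^{op}$, namely $\bR\End_{(\cB^{op})^{op}}(M^\vee)^{op} = \bR\End_{\cB}(M^\vee)^{op}$. (4) Now apply $k$-linear duality once more: since $M$ and all modules involved are perfect over $k$, $\bR\End_{\cB}(M^\vee) \cong \bR\End_{\cB^{op}}(M)^{op}$ canonically, because dualizing a perfect $\cB$-module structure on $M^\vee$ recovers the perfect $\cB^{op}$-module structure on $M$ and turns composition into opposite-composition. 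Combining (2)–(4) gives $(\widehat{\cA}_S)^{op} \cong \bR\End_{\cB}(M^\vee)^{op} = \widehat{\cA^{op}}_{S^{op}}$ as DG algebras, hence $(\widehat{\cT}_S)^{op} = \Perf((\widehat{\cA}_S)^{op}) \cong \Perf(\widehat{\cA^{op}}_{S^{op}}) = \widehat{\cT^{op}}_{S^{op}}$, and one checks the equivalence is natural and compatible with the restriction functors $\kappa^*$.

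The hard part will be step (4): making the duality identifications functorial and homotopy-coherent rather than just on cohomology, i.e. producing an actual quasi-isomorphism of DG algebras $\bR\End_{\cB}(M^\vee) \simeq \bR\End_{\cB^{op}}(M)^{op}$. The clean way is to fix an h-projective (equivalently, since $k$ may not be a field, an h-flat and degreewise finitely generated projective) model $P$ for $M$ over $\cA$ that is simultaneously perfect over $k$; then $P^\vee = \Hom_k(P,k)$ is a strict model for $M^\vee$ over $\cA^{op}$, and the pairing $P \otimes_k P^\vee \to k$ is strict, so all the $\bR\End$'s can be computed by strict $\Hom$-complexes between perfect $k$-modules, where the duality $\Hom_k(-,k)$ is an honest DG functor intertwining composition and its opposite. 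For part 2), to show the statement fails without smoothness or without properness, I would produce counterexamples: dropping properness, take $\cA = k[t]$ (smooth, not proper) with $M$ a skyscraper, where $\widehat{\cA}_M$ is the $t$-adic completion $k[[t]]$ by the Corollaries above, which is not self-opposite-compatible in the required sense because $k[[t]]^{op}$-completion data differ; dropping smoothness, take a non-smooth proper DG algebra (e.g. $k[\varepsilon]/\varepsilon^2$ with $M = k$) and compute that $\widehat{\cA}_M$ and $\widehat{\cA^{op}}_{M^{op}}$ have non-isomorphic (opposite) homotopy types — I would leave the explicit computation to the reader, indicating the mechanism rather than grinding it out.
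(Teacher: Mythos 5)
Your argument for part 1) follows the same route as the paper's: reduce to a smooth and proper DG algebra $\cA$, observe that smoothness plus properness gives $\Perf(\cA)=D_{fin}(\cA)$ and hence the $k$-linear duality equivalence $(-)^\vee\colon \Perf(\cA)^{op}\to\Perf(\cA^{op})$, and then transport the derived double centralizer through it. Where the paper just says this is ``easy to see,'' you correctly isolate the real content (your step~(4): promoting $\bR\End_{\cB}(M^\vee)\cong\bR\End_{\cB^{op}}(M)^{op}$ to a genuine quasi-isomorphism of DG algebras, not just an isomorphism on cohomology), and your proposed fix via an h-flat model that is degreewise finitely generated projective over $k$ is the right idea. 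So for part 1) the proposal is essentially correct and essentially the same as the paper's.

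Part 2) is where there are genuine problems, and both of your proposed counterexamples fail. For the smooth-but-not-proper case you suggest $\cA=k[t]$ with $M=k$, but this is \emph{not} a counterexample: since $k[t]$ is Gorenstein, $\bR\Hom_{k[t]}(-,k[t])$ is an anti-autoequivalence of $\Perf(k[t])$, under which the skyscraper $k$ goes to $k[-1]$ and hence $S^{op}$ corresponds to $S$; thus $\widehat{\cT^{op}}_{S^{op}}\cong\widehat{\Perf(k[t])}_{\langle k\rangle}\cong\Perf(k[[t]])$, while $(\widehat{\cT}_S)^{op}\cong\Perf(k[[t]])^{op}\cong\Perf(k[[t]])$ by the same dualizing-complex trick applied to $k[[t]]$. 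Both sides come out to $\Perf(k[[t]])$, so the claimed failure of ``opposite-compatibility'' does not materialize; a single commutative Noetherian ring is too symmetric to see the phenomenon. For the proper-but-not-smooth case you suggest $\cA=k[\varepsilon]/\varepsilon^2$ with $M=k$, but this is \emph{ill-posed}: $k$ has infinite projective dimension over $k[\varepsilon]/\varepsilon^2$ (in any grading of $\varepsilon$), so $k\notin\Perf(\cA)$ and the subcategory $S=\langle k\rangle$ does not even lie inside $\cT=\Perf(\cA)$, which is required by Definition~\ref{compl_cpt}. The paper instead constructs two-object DG categories specifically engineered to break the left/right symmetry: in the non-smooth example, $\widehat{\cT}_S\cong\Perf(k[\varepsilon]/(\varepsilon^2))$ with $\deg\varepsilon=1$ while $\widehat{\cT^{op}}_{S^{op}}\cong\Perf(k[[t]])$; in the non-proper example (a free $k$-linear category on a quiver with arrows $s_{22}\colon Y_2\to Y_2$ and $s_{12}\colon Y_1\to Y_2$), $\widehat{\cT}_S\cong\Perf(k)$ while $\widehat{\cT^{op}}_{S^{op}}\cong\Perf(M_\infty(k))$. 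The essential idea missing from your proposal is that one needs an \emph{asymmetric} two-object example; a single ring or algebra with $M$ a quotient module will not do.
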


\begin{proof}1) We may assume that $\cT=\Perf(\cA)$ for smooth and proper DG algebra $\cA.$ Then we have that $\Perf(\cA)=D_{fin}(\cA),$
where $D_{fin}(\cA)\subset D(\cA)$ is the subcategory of DG modules which are perfect as $k$-modules.
Therefore, we have an equivalence
\begin{equation}(-)^*:\Perf(A)^{op}\stackrel{\sim}{\to}\Perf(A^{op}),\quad M\to M^*=\bR\Hom_k(M,k).\end{equation}
Denote by $S^*$ the image of $S$ under this equivalence. Then, it is easy to see that
\begin{equation}(\widehat{\cA}_{S})^{op}\cong \widehat{A^{op}}_{S^*}.\end{equation}
This proves part 1) of Proposition.

2) To prove part 2), we first give an example when $\cT$ is proper but not smooth, and Proposition does not hold. Define the DG category $\cA$ as follows. Put $Ob(\cA):=\{X_1,X_2\},$ and
\begin{multline}\Hom(X_1,X_1)=k[\epsilon]/(\epsilon^2), \quad \deg(\epsilon)=1,\quad \Hom(X_1,X_2)=k[0],\\
\Hom(X_2,X_1)=0,\quad \Hom(X_2,X_2)=k.\end{multline}

The differential is identically zero and the composition is the only possible one. Put $\cT=\Perf(\cA),$ and take $S\subset \cT$ to be subcategory generated by $X_1.$ Then it is straightforward to check that
\begin{equation}\widehat{\cT}_S\cong \Perf(k[\epsilon]/(\epsilon^2)),\quad \widehat{\cT^{op}}_{S^{op}}\cong \Perf(k[[t]]).\end{equation}
Hence, there is no equivalence between $(\widehat{\cT}_S)^{op}$ and $\widehat{\cT^{op}}_{S^{op}}.$

Now we give an example when $\cT$ is smooth (and even homotopically finitely presented) but not proper, and Proposition does not hold.

Take the DG category $\cB$ with two objects $Y_1,Y_2,$ which is a free $k$-linear category concentrated in degree zero with generators $s_{22}:Y_2\to Y_2,$ $s_{12}:Y_1\to Y_2.$ Put
$\cT=\Perf(\cB).$
Take $S\subset \cT$ to be subcategory generated by $Y_1.$
Then we have that
\begin{equation}\widehat{\cT}_{S}\cong \Perf(k),\quad \widehat{\cT^{op}}_{S^{op}}\cong\Perf(M_{\infty}(k)),\end{equation}
where $M_{\infty}(k)$ is the endomorphism algebra of free countably generated $k$-module. Hence, there is no equivalence between $(\widehat{\cT}_S)^{op}$ and $\widehat{\cT^{op}}_{S^{op}}.$

Proposition is proved.
\end{proof}

\section{Relation to formal completions of Noetherian schemes}
\label{main}

Before we formnulate and prove main result of this section, we would like to proof a general result which relates double centralizers and homotopy limits of DG algebras.

\subsection{Double centralizers and homotopy limits}
\label{homt_lim_subsect}

Let $I$ be a small category. Denote by $\dgalg_k^I$ the category of functors $I\to \dgalg_k.$ Take some $\{\cA_i\}_{i\in I}\in \dgalg_k^I.$

Then there exists a homotopy limit
\begin{equation}\cA=\holim_{\substack{I}}\cA_i.\end{equation}

We would like to write it in explicit form.
\begin{defi}For a morphism $s:x\to y$ in the category $I,$ we put $r(s):=y,$ $l(s)=x.$ We denote by $\overline{\Mor(I)}$ the set of non-identical morphisms in $I.$\end{defi}

We put
\begin{equation}\cA^i=\prod_{\substack{s_1,\dots,s_p\in\overline{\Mor(I)}, p>0,\\l(s_{i+1})=r(s_i)}}\cA_{r(s_p)}^{i-p}\times\prod_{\substack{x\in I}}\cA_x^i.\end{equation}
For $a\in \cA,$ we denote by $a_{s_p,\dots,s_1}\in\cA_{r(s_p)},$ $a_x\in\cA_x$ the corresponding components. It is convenient to consider components $a_x$ to be corresponding to empty paths
in $I,$ with final object $x$. With this in mind, the differential and the composition are defined as follows. For homogeneous $a,b\in \cA,$
\begin{multline}d(a)_{s_p,\dots,s_1}=d(a_{s_p,\dots,s_1})+(-1)^{\bar{a}+1} s_p(a_{s_{p-1},\dots,s_1})+\\
\sum\limits_{i=1}^{p-1}(-1)^{\bar{a}+1+p-j} a_{s_p,\dots,s_{i+1}s_i,\dots,s_1}+(-1)^{\bar{a}+1+p} a_{s_p,\dots,s_2},\end{multline}
\begin{equation}(a\cdot b)_{s_p,\dots,s_1}=\sum\limits_{i=0}^p (-1)^{(p-i)\bar{b}} a_{s_p,\dots,s_{i+1}}\cdot s_p\dots s_{i+1}(b_{s_i,\dots,s_1}),\end{equation}
where $\bar{a}$ (resp. $\bar{b}$) denote the degree of $a$ (resp. $b$).

Now suppose that we have a compatible system of morphisms $f_x:\cB\to \cA_x,$ $x\in I,$ in $\dgalg_k$ (i.e. $sf_x=f_y$ for $s:x\to y$). Then we have natural morphism
$f:\cB\to \cA=\holim_{\substack{I}}\cA_i,$ given by the formula
\begin{equation}\begin{cases}f(b)_x=f_x(b) & \text{for }x\in I;\\
f(b)_{s_p,\dots,s_1}=0 & \text{for }p>0.\end{cases}\end{equation}

Now suppose that we have also a functor $I^{op}\to Z^0(\cC\text{-Mod}),$ $x\to M_x,$ where $\cC$ is some DG category, and $Z^0(\cC\text{-Mod})$ is the abelian category of right DG $\cC$-modules. Then there exists a homotopy colimit
\begin{equation}M=\hocolim_{\substack{I^{op}}}M_x.\end{equation}
Again, we can write $M$ explicitly as follows:
\begin{equation}M(X)^i=\bigoplus_{\substack{s_1,\dots,s_p\in\overline{\Mor(I)}, p>0,\\l(s_{i+1})=r(s_i)}}M_{r(s_p)}(X)^{i+p}\oplus\bigoplus_{\substack{x\in I}}M_x(X),\quad X\in\cC.\end{equation}

For $m\in M_{r(s_p)}(X)$ (resp. $m\in M_x(X)$) we denote by $m_{s_p,\dots,s_1}\in M(X)$ (resp. $m_x\in M(X)$) the corresponding elements with only one component. Again,
it is convenient to consider $m_x$ to be corresponding to an empty path
in $I,$ with final object $x$. For a homogeneous $m,$ we have that $\deg(m_{s_p,\dots,s_1})=\deg(m)-p.$ For a homogeneous $m_{s_p,\dots,s_1},$ we have
\begin{multline}d(m_{s_p,\dots,s_1})=d(m)_{s_p,\dots,s_1}+(-1)^{\bar{m}} s_p(m)_{s_{p-1},\dots,s_1}+\\
\sum\limits_{i=1}^{p-1}(-1)^{\bar{m}+p-i}m_{s_p,\dots,s_{i+1}s_i,\dots,s_1}+(-1)^{\bar{m}+p} m_{s_p,\dots,s_2}.\end{multline}
Further, for a homogeneous $f\in\Hom_{\cC}(Y, X),$ we have
\begin{equation}m_{s_p,\dots,s_1}\cdot f=(-1)^{p\bar{f}}(mf)_{s_p,\dots,s_1}.\end{equation} 

Suppose that we have a compatible system of morphisms $g:M_x\to N$ for some DG module $N$ (i.e. $g_xs=g_y$ for $s\in\Hom_I(x, y)$). Then we have natural morphism
$g:M=\hocolim_{\substack{I^{op}}}M_x\to N,$ given by the formula
\begin{equation}\begin{cases}g(m_x)=g_x(m) & \text{for }x\in I;\\
f(m_{s_p,\dots,s_1})=0 & \text{for }p>0.\end{cases}\end{equation}

Now, suppose that, with the above notation, we have a system of morphisms of DG algebras $\varphi_x:\cA_x^{op}\to \End_{\cC}(M_x),$ $x\in I,$ which are compatible in the following sense:
\begin{equation}\varphi_x(a)(s(m))=s(\varphi_y(s(a))(m)),\quad a\in \cA_x,\quad m\in M_y,\quad s\in\Hom_I(x,y).\end{equation}
Then we have a natural morphism
\begin{equation}\label{lim_on_colim}\cA^{op}=(\holim_{\substack{I}}\cA_x)^{op}\to \End_{\cC}(M)=\End_{\cC}(\hocolim_{\substack{I^{op}}}M_x).\end{equation}

Explicitly, for homogeneous $a\in\cA,$ $m_{s_p,\dots,s_1}\in M(X),$ we have
\begin{equation}a(m_{s_p,\dots,s_1})=\sum\limits_{i=0}^p (-1)^{i(p-i+\bar{a})} (s_p\dots s_{i+1})(\varphi_{r(s_p)}(a_{s_p,\dots,s_{i+1}})(m))_{s_{i},\dots,s_1}.\end{equation}

Now we are ready to formulate and prove our main technical result.

\begin{lemma}\label{homot_limits}Let $\cA$ be a DG algebra, $\cT\subset D(\cA)$ a full thick essentially small triangulated subcategory. Suppose that $I$ is a small category,
$\{\cA_x\}_{x\in I}\in \dgalg_k^I,$ and we have a compatible system of morphisms $f:\cA\to \cA_x,$ $x\in I.$ Assume that all $\cA_x$ lie in $\cT$ as right
DG $\cA$-modules, and for any $E\in\cT$
the natural map \begin{equation}\hocolim_{\substack{I^{op}}}\bR\Hom_{\cA}(\cA_x,E)\to E\end{equation}
is an isomorphism in $D(k).$ Then we have natural commutative diagram in $\Ho(\dgalg_k):$
\begin{equation}
\label{commut}
\begin{CD}
\cA @>\id >> \cA\\
@V\iota_{\cT} VV                              @VVV\\
\widehat{\cA}_{\cT} @>\cong >> \holim_{\substack{I}}\cA_x.
\end{CD}
\end{equation}
\end{lemma}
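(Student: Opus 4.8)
The plan is to produce an explicit DG model of $\widehat{\cA}_{\cT}$ by choosing the generating subcategory $S$ cleverly and then recognizing the resulting double-centralizer complex as the explicit homotopy limit written out above. First I would choose a small full DG subcategory $\cB\subset\text{h-proj}(\cA)$ consisting of h-projective resolutions $\widetilde{\cA}_x$ of the right DG $\cA$-modules $\cA_x$, $x\in I$; by hypothesis the $\cA_x$ lie in $\cT$, and the assumed isomorphism $\hocolim_{I^{op}}\bR\Hom_{\cA}(\cA_x,E)\xrightarrow{\sim}E$ for all $E\in\cT$ shows precisely that the collection $\{\cA_x\}_{x\in I}$ split-generates $\cT$ in $D(\cA)$ (any $E\in\cT$ is built from the $\cA_x$). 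Hence by Lemma \ref{End(S)} and Proposition \ref{well-def} we may compute $\widehat{\cA}_{\cT}=\widehat{\cA}_S$ using $S=\{\cA_x\}$, so that
\begin{equation}
\Hom_{\widehat{\cA}_{\cT}}(X,Y)=\Hom_{\cB^{op}}\bigl(Q(Y,-),Q(X,-)\bigr),
\end{equation}
for a suitable h-projective resolution $Q\to S$ in $(\cA\otimes\cB^{op})\text{-mod}$, where here $X,Y$ run over the single object of $\cA$ (replacing $\cA$ by a quasi-equivalent h-projective DG algebra, which is legitimate by Morita invariance, Proposition \ref{Morita_invar}).

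Next I would set up the explicit homotopy-limit side. Applying the general construction recalled just before the statement with $\cC=\cA\text{-mod}$ (or $\cC$ the h-projective model), $M_x=\widetilde{\cA}_x$, and $\varphi_x:\cA_x^{op}\to\End_{\cC}(M_x)$ coming from the right module structure, one obtains $M=\hocolim_{I^{op}}M_x$ together with the map $\eqref{lim_on_colim}$, namely a morphism $(\holim_I\cA_x)^{op}\to\End_{\cC}(M)$. The hypothesis $\hocolim_{I^{op}}\bR\Hom_{\cA}(\cA_x,E)\xrightarrow{\sim}E$ applied to $E=\cA$ (which is in $\cT$? — if not, to each $E$ in a generating set) shows that $M\xrightarrow{\sim}\cA$ as a right $\cA$-module via the evaluation map; more importantly, the same isomorphism for general $E\in\cT$ shows that $M$ split-generates $\cT$ and that $\bR\End_{\cA}(M)$ computes $\cB_{\cT}^{op}$ in the notation of the introduction. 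I would then identify the complex $\Hom_{\cB^{op}}(Q(Y,-),Q(X,-))$ appearing in $\widehat{\cA}_{\cT}$ with $\End_{\cC}(M)$: both compute $\bR\End_{\cB_{\cT}^{op}}(M)^{op}$, the derived double centralizer, because $Q$ is an h-projective resolution of $M$ over $\cA\otimes\cB^{op}$ and $M$ is h-projective over $\cA$. This gives a quasi-isomorphism $\widehat{\cA}_{\cT}\xrightarrow{\sim}\End_{\cC}(M)$ of DG algebras.

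Finally I would show that the composite $\holim_I\cA_x\to\End_{\cC}(M)^{op}\xleftarrow{\sim}\widehat{\cA}_{\cT}$ (after taking opposites appropriately) is a quasi-isomorphism and compatible with the maps from $\cA$. For quasi-isomorphy: the morphism $\eqref{lim_on_colim}$ has the property that after applying $\bR\Hom$ one recovers, degreewise in the path length $p$, the summands $\cA_{r(s_p)}$ matching the explicit formula for $\cA^i=\holim_I\cA_i$; more conceptually, $\End_{\cC}(M)=\End_{\cC}(\hocolim_{I^{op}}M_x)$ and since each $M_x$ has $\End_{\cC}(M_x)\simeq\cA_x$ (via $\varphi_x$, using that $M_x$ resolves $\cA_x$ and $\bR\End_{\cA}(\cA_x)=\cA_x$ — here one may need $\cA_x$ to be "formal" enough, or simply that the natural map $\cA_x\to\bR\End_{\cA}(\cA_x)$ is understood as part of the data), the Hom-out-of-a-homotopy-colimit spectral sequence identifies $\End_{\cC}(M)$ with $\holim_I\End_{\cC}(M_x)\simeq\holim_I\cA_x$. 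Compatibility with $\iota_{\cT}:\cA\to\widehat{\cA}_{\cT}$ and $f:\cA\to\holim_I\cA_x$ is checked directly from the explicit formulas: $\iota_{\cT}$ sends $\cA$ into the $p=0$ part $\prod_x\cA_x$ via the $f_x$, which is exactly the formula given for $f$. I expect the main obstacle to be the bookkeeping of signs and the precise identification $\bR\Hom$ out of the bar-type homotopy colimit with the explicit $\holim$ formula — i.e. checking that the map $\eqref{lim_on_colim}$ is a quasi-isomorphism rather than merely a morphism; this amounts to a convergence/filtration argument on the path-length grading, using that $\{\cA_x\}$ split-generates $\cT$ together with the hypothesized $\hocolim$-isomorphism to control each graded piece.
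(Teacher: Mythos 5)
Your proposal diverges from the paper's proof in a way that introduces a genuine gap right at the start, and the later steps inherit the problem.

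The pivotal error is the claim that the hypothesis ``$\hocolim_{I^{op}}\bR\Hom_{\cA}(\cA_x,E)\to E$ is an isomorphism for all $E\in\cT$'' shows that $\{\cA_x\}_{x\in I}$ split-generates $\cT$. It does not. That condition is a statement about a homotopy colimit of complexes of $k$-modules converging to $E$; it is not a finite (thick-subcategory) generation statement in $D(\cA)$. The Lemma only assumes the $\cA_x$ \emph{lie in} $\cT$, not that they classically generate it, and in the paper's applications they typically do not. Since the definition of $\widehat{\cA}_{\cT}$ requires a set $S$ that actually generates $\cT$, you cannot take $S=\{\cA_x\}$ and invoke Lemma \ref{End(S)} and Proposition \ref{well-def}; that step collapses. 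The paper sidesteps this entirely by choosing a \emph{separate} generating DG category $\cD$ of h-injective objects of $\cT$, keeping the $\cA_x$ in the picture only through the restricted presheaves $\Hom_{\cA}(\cA_x,-)\in\cD^{op}\text{-mod}$, and using the hypothesis to identify the ``diagonal'' $\cD^{op}$-module $\Hom_{\cA}(\cA,-)$ with $\hocolim_{I^{op}}\Hom_{\cA}(\cA_x,-)$.

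Two more problems follow from the same wrong turn. First, taking $M_x=\widetilde{\cA}_x$ does not give a functor $I^{op}\to Z^0(\cC\text{-Mod})$: the structure maps $\cA_x\to\cA_y$ are covariant in $x$, so $x\mapsto\cA_x$ is a functor from $I$, not $I^{op}$, and the explicit $\hocolim_{I^{op}}$ formula you want to plug into does not apply. The paper's $M_x=\Hom_{\cA}(\cA_x,-)$ is contravariant in $x$, which fixes both the variance and the appearance of the opposite algebra in $\varphi_x:\cA_x^{op}\to\End_{\cD^{op}}(M_x)$. Second, the claim that $M\xrightarrow{\sim}\cA$ needs $\cA\in\cT$, which fails in the main applications (e.g.\ for $\cT=D^b_{coh,Y}(X)$ with $Y\subsetneq X$), and the asserted identification $\widehat{\cA}_{\cT}\xrightarrow{\sim}\End_{\cC}(M)$ with $\cC=\cA$ is off: $\bR\End_{\cA}(M)$ is the \emph{single} centralizer $\cB_M$, not the double centralizer. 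Where you anticipate a filtration/convergence argument on path length ``using that $\{\cA_x\}$ split-generates $\cT$,'' the paper instead concludes by a short chain of natural isomorphisms, $\widehat{\cA}_{\cT}\cong\bR\End_{\cD^{op}}(\Hom_{\cA}(\cA,-))\cong\bR\Hom_{\cD^{op}}(\hocolim\Hom_{\cA}(\cA_x,-),\Hom_{\cA}(\cA,-))\cong\holim_I\bR\Hom_{\cD^{op}}(\Hom_{\cA}(\cA_x,-),\Hom_{\cA}(\cA,-))\cong\holim_I\cA_x$, the last step using $\cA_x\in\cT$ via Yoneda; no generation of $\cT$ by the $\cA_x$ is invoked. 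To repair your proof you would need to abandon the attempt to set $S=\{\cA_x\}$ and instead mirror the paper's maneuver of passing to the $\cD^{op}$-presheaves.
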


\begin{proof}Choose some set of h-injective $\cA^{op}$-modules which generate $\cT,$ and denote by $\cD$ the corresponding DG category. Then
by our assumptions, we have natural quasi-isomorphism of DG $\cD$-modules:
\begin{equation}\hocolim_{\substack{I^{op}}}\Hom_{\cA}(\cA_x,-)\to \Hom_{\cA}(\cA,-).\end{equation}
Therefore, we have natural isomorphism in $\Ho(\dgalg):$
\begin{equation}\label{iso}\widehat{\cA}_{\cT}^{op}\cong \bR\End_{\cD^{op}}(\hocolim_{\substack{I^{op}}}\Hom_{\cA}(\cA_x,-)).\end{equation}
We have natural compatible system of morphisms of DG algebras:
\begin{equation}\varphi_x:\cA_x^{op}\to\End_{\cD^{op}}(\Hom_{\cA}(\cA_x,-)).\end{equation}
Therefore, as in \eqref{lim_on_colim}, we have natural morphism
\begin{equation}\varphi:(\holim_{\substack{I}}\cA_x)^{op}\to\End_{\cD^{op}}(\hocolim_{\substack{I^{op}}}\Hom_{\cA}(\cA_x,-)).\end{equation} Composing it with natural map from $\End$ to
$\bR\End$ (in $\Ho(\dgalg_k)$) and \eqref{iso}, we obtain a natural morphism
\begin{equation}\label{desired}\holim_{\substack{I}}\cA_x\to \widehat{\cA}_{\cT}\end{equation}
in $\Ho(\dgalg).$ Further, since $\cA_x\in\cT,$ we have natural isomorphisms in $D(k):$
\begin{equation}\cA_x\stackrel{\sim}{\to}\bR\Hom_{\cD^{op}}(\Hom_{\cA}(\cA_x,-),\Hom_{\cA}(\cA,-)).\end{equation}
To conclude that \eqref{desired} is an isomorphism, it suffices to note the following chain of isomorphisms in $D(k):$
\begin{multline}\label{chain}\widehat{\cA}_{\cT}\cong \bR\End_{\cD^{op}}(\Hom_{\cA}(\cA,-))\cong\\
\bR\Hom_{\cD^{op}}(\hocolim_{\substack{I^{op}}}\Hom_{\cA}(\cA_x,-),\Hom_{\cA}(\cA,-))\cong\\
\holim_{\substack{I}}\bR\Hom_{\cD}(\Hom_{\cA}(\cA_x,-),\Hom_{\cA}(\cA,-))
\cong \holim_{\substack{I}}\cA_x.\end{multline}

It is easy to check that the composition \eqref{chain} is inverse (in $D(k)$) to the morphism of DG algebras \eqref{desired}, so we obtain the desired isomorphism in $\Ho(\dgalg).$
Commutativity of $\eqref{commut}$ is straightforward to check.
\end{proof}

\subsection{Algebraizable derived categories of formal completions of schemes }
\label{algebraizable}

Let $X$ be a separated Noetherian $k$-scheme. Recall that \cite{BvdB} $D(X)=D(\QCoh(X)),$ the derived category of quasi-coherent sheaves on $X,$ is compactly generated by
one object, and $D(X)^c=\Perf(X).$ More precisely, they prove this for the category $D_{qch}(X)$ of complexes of $\cO_X$-modules with quasi-coherent
cohomology, but for $X$ separated the latter category is known to be equivalent to $D(\QCoh(X))$ (see \cite{BvdB}).

Now let $Y\subset X$ a closed subscheme.
We would like to define the algebraizable derived category $D_{alg}(\widehat{X}_Y).$

Let $\cI_Y\subset\cO_X$ be ideal sheaf defining $Y.$ Denote by $Y_n\subset X$ the $n$-th infinitesimal neighborhood of $Y,$ with ideal sheaf $\cI_Y^n.$
Denote by $\iota_{n,n+1}:Y_n\to Y_{n+1},$ $\iota_n:Y_n\to X$ the natural inclusions. Choose some DG enhancements
for $\Perf(X)$ and $\Perf(Y_n),$ with DG enhancements of functors $\bL\iota_n^*,$ $\bL\iota_{n,n+1}^*$ (we write the corresponding DG functors in the same way),
so that we have equalities of DG functors $\bL\iota_n^*=\bL\iota_{n,n+1}^*\bL\iota_{n+1}^*.$ Denote by $\bR\Hom(-,-)$ the complexes of morphisms in the corresponding DG enhancements.

Define the DG category $\Perf_{alg}(\widehat{X}_Y)$ as follows. Its objects are the same as in $\Perf(X).$ Further, for $\cE,\cF\in\Perf(X),$ we put
\begin{equation}\Hom_{\Perf_{alg}(\widehat{X}_Y)}(\cE,\cF):=\holim_n\bR\Hom(\bL\iota_n^*\cE,\bL\iota_n^*\cF).\end{equation}
Composition are defined in the obvious way (as in the case of homotopy limits of DG algebras). Define algebraizable derived category by the formula
\begin{equation}D_{alg}(\widehat{X}_Y):=D(\Perf_{alg}(\widehat{X}_Y)).\end{equation}

We have an obvious DG functor
\begin{equation}\kappa^*:\Perf(X)\to\Perf_{alg}(\widehat{X}_Y),)\end{equation}
and the corresponding functor
\begin{equation}\bL\kappa^*:D(X)\to D_{alg}(\widehat{X}_Y).\end{equation}

\begin{remark}In the case when $X=\Spec(A)$ is affine, and $Y=\Spec(A/I),$ we easily see that
\begin{equation}D_{alg}(\widehat{X}_Y)\cong D(\widehat{A}_I),\end{equation}
where $\widehat{A}_I=\lim_n A/I^n$ is the $I$-adic completion of $A.$ The functor $\kappa^*$ in this case is just the restriction of scalars for the natural morphism $A\to \widehat{A}_I.$ \end{remark}

\begin{theo}\label{main-theo}Let $X$ be a separated Noetherian scheme, and $Y\subset X$ a closed subscheme. Then we have the following commutative diagram:
\begin{equation}
\label{commut2}
\begin{CD}
D(X) @>\id >> D(X)\\
@VVV                              @V\bL\kappa^* VV\\
\widehat{D(X)}_{D^b_{coh,Y}(X)} @>\cong >> D_{alg}(\widehat{X}_Y).
\end{CD}
\end{equation}
\end{theo}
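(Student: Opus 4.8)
The plan is to reduce to the affine case and then invoke Lemma \ref{homot_limits}. First I would note that both sides and the functor $\bL\kappa^*$ are ``local'' in the sense that they are Morita-invariant (Proposition \ref{Morita_invar}) and compatible with the standard DG enhancement of $\Perf(X)$; so it suffices to produce, for a suitable choice of compact generator of $D(X)$, a DG algebra $\cA$ with $D(\cA) \cong D(X)$ under which $D^b_{coh,Y}(X)$ corresponds to a full thick subcategory $\cT \subset D(\cA)$, and an identification of $\holim_n \cA_x$ with $\Perf_{alg}(\widehat{X}_Y)$. Concretely, if $G$ is a compact generator of $D(X)$ (say a suitable perfect complex), put $\cA := \bR\End_X(G)$; the subcategory $\cT$ is then the triangulated subcategory of $D(\cA)$ corresponding to $D^b_{coh,Y}(X)$, which by Thomason–Neeman localization is split-generated inside $\Perf(\cA) = \Perf(X)$ by the perfect complexes supported on $Y$.

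The heart of the argument is to choose the index category $I$ and the diagram $\{\cA_x\}$ so that Lemma \ref{homot_limits} applies and produces exactly $\Perf_{alg}(\widehat{X}_Y)$. Here I would take $I = \N$ (the poset of natural numbers) and set $\cA_n := \bR\End_{Y_n}(\bL\iota_n^* G)$, with the transition maps induced by $\bL\iota_{n,n+1}^*$ and the structure maps $f_n : \cA \to \cA_n$ induced by $\bL\iota_n^*$; these are compatible because of the chosen equalities $\bL\iota_n^* = \bL\iota_{n,n+1}^* \bL\iota_{n+1}^*$ of DG functors. With this choice $\holim_n \cA_n$ is, by the explicit formula for homotopy limits of DG algebras recalled in Subsection \ref{homt_lim_subsect}, precisely $\Hom_{\Perf_{alg}(\widehat{X}_Y)}(G,G)$, i.e. the endomorphism DG algebra of the image of $G$ in $\Perf_{alg}(\widehat{X}_Y)$; and an identical computation of $\Hom$-complexes between arbitrary perfect complexes shows $D_{alg}(\widehat{X}_Y) \cong D(\holim_n \cA_n)$ compatibly with $\kappa^*$. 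So the theorem follows once the two hypotheses of Lemma \ref{homot_limits} are verified for this diagram: (a) each $\cA_n$, viewed as a right $\cA$-module, lies in $\cT$; and (b) for every $E \in \cT$ the natural map $\hocolim_n \bR\Hom_{\cA}(\cA_n, E) \to E$ is an isomorphism.

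For (a): as a right $\cA$-module, $\cA_n$ corresponds under $D(\cA) \cong D(X)$ to $\bR\Hom_{Y_n}(\bL\iota_n^*G, \bL\iota_n^*G)$ pushed forward along $\iota_n$, equivalently to $\bR(\iota_n)_* \bL\iota_n^* G^\vee \otimes G$-type object; the point is that its cohomology is a coherent $\cO_X$-module set-theoretically supported on $Y$, hence it lies in $D^b_{coh,Y}(X) = \cT$. This is where the Noetherian and separatedness hypotheses enter (coherence of pushforwards, and the agreement $D_{qch}(X) \cong D(\QCoh X)$ from \cite{BvdB}). For (b): reducing as usual to $E$ running over a set of compact generators of $\cT$, i.e. perfect complexes on $X$ supported on $Y$, the claim becomes that such an $E$ is the homotopy colimit of its ``restrictions'' $\bR\Hom_{Y_n}$-computed pieces — which, translated back to schemes, is the statement that a bounded coherent complex supported on $Y$ is recovered as $\hocolim_n \bR(\iota_n)_*\bL\iota_n^* E$. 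This is the $I$-adic (derived) completeness/cofinality statement: for $E$ supported on $Y$ one has $E \cong \bR\lim$ — or dually $\hocolim$ — over the infinitesimal neighborhoods, which follows from the fact that $E$ is already ``complete along $Y$'' together with Artin–Rees / the Noetherian Mittag-Leffler argument. I expect step (b) to be the main obstacle: it requires a careful comparison of the derived colimit $\hocolim_n \bR\Hom_{\cA}(\cA_n, E)$ with the naive statement about infinitesimal neighborhoods, controlling the $\lim^1$-type terms, and this is precisely the place where the regularity-free generality forces one to work with $D^b_{coh,Y}$ rather than with $\Perf$ alone. Once (a) and (b) are in hand, Lemma \ref{homot_limits} gives the isomorphism $\widehat{\cA}_{\cT} \cong \holim_n \cA_n$ in $\Ho(\dgalg_k)$ together with commutativity with the structure maps from $\cA$, and passing to derived categories and unwinding the Morita identifications yields the commutative square \eqref{commut2}.
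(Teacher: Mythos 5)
Your plan coincides in all essential respects with the paper's proof: choose a perfect generator $\cF$ of $D(X)$, set $\cA := \bR\End(\cF)$, set $\cA_n := \bR\End(\bL\iota_n^*\cF)$ with the obvious compatible system of maps from $\cA$, observe $\cA_n \cong \bR\Hom(\cF,\iota_{n*}\bL\iota_n^*\cF)$ lies in $\cT$, apply Lemma \ref{homot_limits}, and finally identify $\holim_n\cA_n$ with $\End_{\Perf_{alg}(\widehat X_Y)}(\kappa^*\cF)$. So the overall route is the same.

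Two remarks on step (b), which is where your sketch differs from and is less precise than the paper. First, the paper does not argue by Artin--Rees or Mittag--Leffler; it invokes Grothendieck's local cohomology theorem directly, in the form
\begin{equation*}
\hocolim_n \bR\cHom(\iota_{n*}\bL\iota_n^*\cG, E)\stackrel{\sim}{\longrightarrow}\bR\cHom(\cG,E),\qquad \cG\in\Perf(X),\ E\in D^Y(X),
\end{equation*}
and then passes to global $\bR\Hom$ using that $\bR\Gamma$ commutes with coproducts; since this holds for all $E\in D^Y(X)\supset D^b_{coh,Y}(X)$, there is no need to reduce to generators. Second, when you ``translate back to schemes'' you write that $E$ should be recovered as $\hocolim_n\bR(\iota_n)_*\bL\iota_n^*E$; this is the wrong functor. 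The restriction $\iota_n$ enters contravariantly (inside $\bR\Hom_{\cA}(\cA_n,-)$), so what is needed is $\hocolim_n \iota_{n*}\iota_n^{!}E\cong E$, where $\iota_n^{!}E=\bR\cHom_{\cO_X}(\iota_{n*}\cO_{Y_n},E)$; this is precisely the Grothendieck statement above (with the transition maps going in the correct direction for a colimit, which is not the case for $\iota_{n*}\bL\iota_n^*$). Relatedly, your phrase ``compact generators of $\cT$, i.e.\ perfect complexes supported on $Y$'' is problematic when $X$ is not regular, since $\Perf_Y(X)$ does not split-generate $D^b_{coh,Y}(X)$; verifying the hypothesis directly on all of $D^b_{coh,Y}(X)$, as the paper does, sidesteps this.
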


\begin{proof} We follow notation above the theorem. Choose a generator $\cF\in\Perf(X).$ Then $\kappa^*(\cF)\in \Perf_{alg}(\widehat{X}_Y)$ is a compact generator of $D_{alg}(\widehat{X}_Y).$ Put
\begin{equation}\cA:=\bR\End(\cF),\quad \cA_n:=\bR\End(\bL\iota_n^*\cF).\end{equation}
We have obvious morphisms $\bL\iota_{n,n+1}^*:\cA_{n+1}\to\cA_n.$ Hence $\{\cA_n\}_{n\in \N}\in\dgalg_k^{\N^{op}},$ where we treat $\N$ as a category: $Ob(\N)=\Z_{>0},$
$\Hom_{\N}(i,j)=\emptyset$ for $i>j,$ and there is exactly one morphism $i\to j$ for $i\leq j.$ Further, we have morphisms of DG algebras
\begin{equation}\label{morphisms}\bL\iota_n^*:\cA\to\cA_n,\end{equation}
which are compatible with $\bL\iota_{n,n+1}^*$ by our assumptions. Further, denote by $\cT\subset D(\cA)$ the essential image of $D^b_{coh,Y}(X)$ under the equivalence
\begin{equation}\bR\Hom(\cF,-):D(X)\to D(\cA).\end{equation}
By adjunction, $\cA_n\cong \bR\Hom(\cF,\iota_{n*}\bL\iota_n^*\cF)\in\cT.$ We claim that the data of $\cA,$ $\{\cA_n\}_{n\in \N}\in\dgalg_k^{\N^{op}},$ $\cT\subset D(\cA)$ and morphisms \eqref{morphisms}
satisfies the assumptions of Lemma \ref{homot_limits}. Indeed, by Grothendieck Theorem \cite{Gr}, for any $\cG\in \Perf(X),$ $E\in D^Y(X)$ we have an isomorphism in $D(X):$
\begin{equation}\hocolim_n \bR\cHom(\iota_{n*}\bL\iota_n^*\cG,E)\stackrel{\sim}{\to} \bR\cHom(\cG,E).\end{equation}
Moreover, since the functor $\bR\Gamma$ commutes with infinite direct sums, we have a chain of isomorphisms in $D(k):$
\begin{multline}\hocolim_n \bR\Hom(\iota_{n*}\bL\iota_n^*\cG,E)\cong \hocolim_n \bR\Gamma(\bR\cHom(\iota_{n*}\bL\iota_n^*\cG,E))\cong\\
\bR\Gamma(\hocolim_n \bR\cHom(\iota_{n*}\bL\iota_n^*\cG,E))\cong \bR\Gamma(\bR\cHom(\cG,E))\cong \bR\Hom(\cG,E).\end{multline}
Therefore, we have the following isomorphisms:
\begin{multline}\hocolim_n\bR\Hom_{\cA}(\cA_n,\bR\Hom(\cF,E))\cong \hocolim_n \bR\Hom(\iota_{n*}\bL\iota_n^*\cF,E)\\
\cong \bR\Hom(\cF,E).\end{multline}
Hence, the assumptions of Lemma \ref{homot_limits} are satisfied. Applying it, we obtain the chain of equivalences
\begin{equation}\widehat{D(X)}_{D^b_{coh,Y}(X)}\cong\widehat{D(\cA)}_{\cT}\cong D(\holim_n\cA_n)\cong D_{alg}(X).\end{equation}
The last equivalence follows from the observation that $\kappa^*(\cF)\in\Perf_{alg}(\widehat{X}_Y)$ is a compact generator of $D_{alg}(X)$ and
\begin{equation}\End_{\Perf_{alg}(\widehat{X}_Y)}(\kappa^*(\cF))\cong\holim_n\cA_n.\end{equation}
Commutativity of \eqref{commut2} is straightforward.
\end{proof}

\begin{cor}Let $X$ be a separated Noetherian scheme, and $Y\subset X$ a closed subscheme. Then

1) The restriction of the functor $\bL\kappa^*:D(X)\to D_{alg}(\widehat{X}_Y)$ to $D^b_{coh,Y}(X)$ is full and faithful;

2) The functor
\begin{equation}D_{alg}(\widehat{X}_Y)\to \widehat{D_{alg}(\widehat{X}_Y)}_{D^b_{coh,Y}(X)}\end{equation}
is an equivalence.\end{cor}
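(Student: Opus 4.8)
The plan is to deduce both statements formally from Theorem~\ref{main-theo} combined with Theorem~\ref{completion_complete}, applied to the pair $\cD=D(X)$, $\cT=D^b_{coh,Y}(X)$. The first thing to do is to check that this pair satisfies the hypothesis of Theorem~\ref{completion_complete}. Here $\cD^c=\Perf(X)$, and since $X$ is Noetherian every perfect complex has bounded coherent cohomology, so $\cT\cap\cD^c$ is exactly the category $\Perf_Y(X)$ of perfect complexes whose cohomology is (set-theoretically) supported on $Y$. I would then argue that the smallest localizing subcategory $\cL\subset D(X)$ containing $\Perf_Y(X)$ already contains all of $D^b_{coh,Y}(X)$; in fact $\cL=D_{qc,Y}(X)$, the full subcategory of complexes with quasi-coherent cohomology supported on $Y$. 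One clean way: for $P\in\Perf_Y(X)$ and arbitrary $E\in D(X)$ one has $P\otimes^{\bL}E\in\cL$ (the full subcategory of such $E$ is localizing and contains $\cO_X$, hence is all of $D(X)$); and for $E\in D_{qc,Y}(X)$ the standard Koszul-complex description of local cohomology (cf.~\cite{Gr}, as already used in the proof of Theorem~\ref{main-theo}) gives $E\cong\bR\Gamma_Y(E)\cong\hocolim_n(\cK_n^{\vee}\otimes^{\bL}E)$ with $\cK_n\in\Perf_Y(X)$, so $E\in\cL$. Since $D^b_{coh,Y}(X)\subset D_{qc,Y}(X)$ trivially, the hypothesis of Theorem~\ref{completion_complete} holds.

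Next I would prove statement~1). By Theorem~\ref{main-theo} there is an equivalence $\widehat{D(X)}_{D^b_{coh,Y}(X)}\xrightarrow{\;\cong\;}D_{alg}(\widehat{X}_Y)$ carrying the categorical restriction functor $\kappa^*=\bL\iota_{\cT}^*$ to the functor $\bL\kappa^*:D(X)\to D_{alg}(\widehat{X}_Y)$ of Subsection~\ref{algebraizable}. By Theorem~\ref{completion_complete}(i) (whose hypothesis we have just verified) the functor $\kappa^*$ is full and faithful on $\cT=D^b_{coh,Y}(X)$; transporting this across the equivalence yields that $\bL\kappa^*$ is full and faithful on $D^b_{coh,Y}(X)$, which is statement~1). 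In particular the image of $D^b_{coh,Y}(X)$ under $\bL\kappa^*$ is again a full thick essentially small triangulated subcategory of $D_{alg}(\widehat{X}_Y)$, and by the commutativity of the square in Theorem~\ref{main-theo} it corresponds, under the equivalence above, precisely to the image $\kappa^*(\cT)\subset\widehat{D(X)}_{\cT}$.

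For statement~2), it now follows from the previous paragraph that, under the equivalence $D_{alg}(\widehat{X}_Y)\cong\widehat{D(X)}_{\cT}$, the functor $D_{alg}(\widehat{X}_Y)\to\widehat{D_{alg}(\widehat{X}_Y)}_{D^b_{coh,Y}(X)}$ is identified with $\kappa^*:\widehat{D(X)}_{\cT}\to\widehat{\widehat{D(X)}_{\cT}}_{\cT}$ (using that completion along the subcategory $\bL\kappa^*(D^b_{coh,Y}(X))$ matches completion along $\kappa^*(\cT)$, by the identification of the previous paragraph). By Theorem~\ref{completion_complete}(ii) the latter functor is an equivalence, hence so is the former. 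This proves statement~2).

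The main obstacle is the verification carried out in the first paragraph, namely that the localizing subcategory generated by the perfect complexes supported on $Y$ recaptures all of $D^b_{coh,Y}(X)$ — once this is in place, statements~1) and~2) are a routine transport of structure across the equivalence of Theorem~\ref{main-theo}. A secondary point requiring only minor bookkeeping is the identification, in the second paragraph, of the subcategory along which one completes on the algebraizable side with $\kappa^*(\cT)$ on the categorical side; this is immediate from the commutativity asserted in Theorem~\ref{main-theo}.
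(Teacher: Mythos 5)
Your overall strategy coincides with the paper's: verify the hypothesis of Theorem~\ref{completion_complete} for $\cD=D(X)$, $\cT=D^b_{coh,Y}(X)$, and then transport both conclusions of that theorem across the equivalence of Theorem~\ref{main-theo}. The one place the paper takes a shortcut is the verification step: it simply cites [AJPV] for the fact that $D_Y(X)$ is compactly generated by $\Perf_Y(X)$, which immediately gives that $D^b_{coh,Y}(X)$ lies in the localizing subcategory generated by $\Perf_Y(X)=\cT\cap\cD^c$. You instead try to re-derive this fact, and that is where there is a slip.

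In your argument that $P\otimes^{\bL}E\in\cL$ for $P\in\Perf_Y(X)$ and arbitrary $E\in D(X)$, you observe that the subcategory of all such $E$ is localizing and contains $\cO_X$, and then conclude it is all of $D(X)$. But $\cO_X$ is a compact generator of $D(X)$ only when $X$ is affine; for, say, $X=\PP^1$, the localizing subcategory generated by $\cO_X$ is strictly smaller than $D(X)$. The correct statement is that this subcategory contains $\Perf(X)$ (because $\Perf_Y(X)$ is a thick $\otimes$-ideal in $\Perf(X)$, so $P\otimes^{\bL}Q\in\Perf_Y(X)\subset\cL$ for every perfect $Q$), and it is the localizing subcategory generated by $\Perf(X)=D(X)^c$ that equals $D(X)$. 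The subsequent Koszul/local-cohomology step also requires some care on a general (non-affine) separated Noetherian scheme, which is exactly the content the paper outsources to [AJPV]. Once the verification of the hypothesis is in place — either via a corrected version of your argument or simply by citing [AJPV] — the transport-of-structure arguments in your second and third paragraphs are correct and match the paper's one-line deduction.
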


\begin{proof}Recall that the category $D_Y(X)$ is compactly generated by $\Perf_Y(X)\subset D^b_{coh,Y}(X)$ \cite{AJPV} Hence, both 1) and 2) are direct consequences of Theorems \ref{main-theo} and \ref{completion_complete}.\end{proof}

We have a nice corollary for completions of regular Noetherian $k$-algebras.

\begin{cor}\label{regular}Let $R$ be a regular commutative Noetherian $k$-algebra, and $M\in D_{f.g.}(R)\cong D^b_{coh}(\Spec R)$
be a complex of $R$-modules with finitely generated cohomology. Denote by $I\subset R$ the annihilator of $H^{\cdot}(M),$
so that $V(\sqrt{I})\subset \Spec R$ is precisely the support of $M.$ Then we have an isomorphism
\begin{equation}\widehat{R}_M\cong \widehat{R}_I,\end{equation}
where the RHS is the ordinary $I$-adic completion.\end{cor}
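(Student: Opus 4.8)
The plan is to reduce the statement to the machinery of Section~\ref{main}. By Proposition~\ref{well-def}, 2) and the discussion after it, $\widehat{R}_M$ depends only on the full thick essentially small triangulated subcategory $\cT:=\langle M\rangle\subset D(R)$ classically generated by $M$, so the first step is to identify $\cT$. Since $R$ is a regular Noetherian ring it has finite global dimension, hence $\Perf(\Spec R)=D^b_{coh}(\Spec R)$ and in particular every finitely generated $R$-module --- each $R/I^{n}$ among them --- is a perfect complex. Thus $\cT$ is a thick subcategory of $\Perf(\Spec R)$, and by the Hopkins--Neeman--Thomason classification of thick subcategories of perfect complexes it equals $\{N\in\Perf(\Spec R):\supp N\subseteq\supp M\}$. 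By the definition of $I$ we have $\supp M=V(I)=V(\sqrt I)$, so, using regularity once more, $\cT=D^b_{coh,V(I)}(\Spec R)$, i.e. $\cT=D^b_{coh,Y}(X)$ with $X=\Spec R$, $Y=V(I)$.

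Next I would compute $\widehat{R}_{\cT}$ by applying Lemma~\ref{homot_limits} with $\cA=R$, with indexing category $\N^{op}$, with $\cA_n:=R/I^{n}$, and with the compatible system $R\to R/I^{n}$ of quotient maps. Its two hypotheses hold for elementary reasons. Each $R/I^{n}$ lies in $\cT$, being a perfect complex with $\supp(R/I^{n})=V(I)=\supp M$ (here the thick subcategory classification is used again). And for every $E\in\cT$ the natural map $\hocolim_n\bR\Hom_R(R/I^{n},E)\to E$ is an isomorphism in $D(k)$: the left-hand side is the derived $I$-torsion $\bR\Gamma_I(E)=\colim_n\bR\Hom_R(R/I^{n},E)$, which maps isomorphically to $E$ because $E$ has bounded coherent cohomology set-theoretically supported on $V(I)$ --- this is the Grothendieck local-cohomology theorem already used in the proof of Theorem~\ref{main-theo}. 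Lemma~\ref{homot_limits} then yields
\begin{equation}
\widehat{R}_M\;\cong\;\widehat{R}_{\cT}\;\cong\;\holim_n R/I^{n}\;=\;\bR\lim_n R/I^{n}
\end{equation}
in $\Ho(\dgalg_k)$. Finally, since the transition maps $R/I^{n+1}\to R/I^{n}$ are surjective the tower is Mittag--Leffler, so $\bR^{1}\lim$ vanishes and $\bR\lim_n R/I^{n}=\lim_n R/I^{n}=\widehat{R}_I$ is concentrated in degree $0$; this gives the asserted isomorphism. (Alternatively, one may simply specialize Theorem~\ref{main-theo} and its proof to the affine case with the generator $\cF=R$: then $\cA=\bR\End(\cF)=R$, $\cA_n=\bR\End(\bL\iota_n^{*}\cF)=R/I^{n}$, and the $\holim$ appearing there is $\widehat{R}_I$, while the remark before Theorem~\ref{main-theo} identifies $D_{alg}(\widehat{X}_Y)$ with $D(\widehat{R}_I)$.)

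The step I expect to be the main obstacle is the identification $\cT=D^b_{coh,V(I)}(\Spec R)$ --- equivalently, the assertion that $M$ classically generates \emph{all} complexes with bounded coherent cohomology set-theoretically supported on its own support. This is precisely where regularity of $R$ enters (through $\Perf=D^b_{coh}$ and the thick subcategory theorem), and it is the hypothesis whose failure breaks the analogous statement without regularity, cf. Corollary~\ref{surj_of_alg1} and Proposition~\ref{infin_extensions}. Once $\cT$ is pinned down, the rest is a direct application of Lemma~\ref{homot_limits} together with the trivial Mittag--Leffler vanishing, and checking the hypotheses of that Lemma for the $I$-adic tower is routine local-cohomology bookkeeping.
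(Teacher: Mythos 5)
Your proof is correct and takes essentially the same route as the paper: identify $\cT=\langle M\rangle$ with $D^b_{coh,V(\sqrt I)}(\Spec R)$ via Hopkins--Neeman together with the regularity identification $\Perf=D^b_{coh}$, and then invoke the completion machinery (Theorem~\ref{main-theo}, or equivalently its affine specialization via Lemma~\ref{homot_limits}, which is the alternative you yourself point out). The only cosmetic difference is that you unwind Theorem~\ref{main-theo} back to Lemma~\ref{homot_limits} and the Mittag--Leffler vanishing, whereas the paper simply cites Theorem~\ref{main-theo}; the substance of the argument is identical.
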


\begin{proof}By a Theorem of Hopkins \cite{Ho} and Neeman \cite{Nee}, all full thick triangulated subcategories of $\Perf(R)\cong \Perf(\Spec R)$
generated by one object
are of the form $\Perf_Z(\Spec R)$ (perfect complexes with cohomology supported on $Z$) for a closed subset $Z\subset \Spec R.$ Further,
since $R$ is regular, we have that $D^b_{coh}(\Spec R)=\Perf(\Spec R).$ It follows that $M$ generates the subcategory $D^b_{coh,V(\sqrt{I})}(\Spec R)\subset D^b_{coh}(\Spec R).$ It remains to apply Theorem \ref{main-theo}.\end{proof}

\begin{cor}\label{surj_of_alg}Let $R$ be commutative Noetherian $k$-algebra, and $I\subset R$ an ideal. Assume that either $R$ or $R/I$ is regular. Then we have an isomorphism
\begin{equation}\widehat{R}_{(R/I)}\cong\widehat{R}_I,\end{equation}
where the RHS is ordinary $I$-adic completion.\end{cor}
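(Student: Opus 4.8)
The plan is to reduce both cases to Corollary \ref{regular}. First consider the case where $R$ is regular. Then $R/I$, viewed as an object of $D^b_{coh}(\Spec R)$, has cohomology (concentrated in degree zero, equal to $R/I$) whose annihilator is precisely $I$, so $\sqrt{I}$ cuts out $V(I) = \supp(R/I)$. Since $R$ is regular we have $D^b_{coh}(\Spec R) = \Perf(\Spec R)$, so $R/I$ is a genuine perfect complex and Corollary \ref{regular} applies verbatim (with $M = R/I$), giving $\widehat{R}_{(R/I)} \cong \widehat{R}_I$, the ordinary $I$-adic completion. Note that here $\widehat{R}_{(R/I)}$ means $\widehat{R}_{\cT}$ where $\cT \subset \Perf(\Spec R)$ is the thick subcategory generated by the object $R/I$, and by the Hopkins--Neeman classification this is $\Perf_{V(I)}(\Spec R)$.

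Now suppose instead that $R/I$ is regular (but $R$ need not be). The key point is that the formal completion $\widehat{R}_{(R/I)}$ depends, by Proposition \ref{well-def} part 2), only on the thick subcategory of $D(R)$ classically generated by $R/I$ — equivalently, on the thick subcategory $\cT \subset \Perf(R)$ generated by $R/I$. I would argue that this subcategory, and hence $\widehat{R}_{(R/I)}$, is unchanged if we enlarge to $D^b_{coh, V(I)}(\Spec R)$ — that is, $R/I$ classically generates $D^b_{coh, V(I)}(\Spec R)$ inside $D(R)$. Indeed, any coherent sheaf supported set-theoretically on $V(I)$ is, after passing to a suitable infinitesimal thickening, filtered by $\cO_{Y_n}$-modules, and $\cO_{Y_n} = R/I^n$ is in turn built from $R/I$ by finitely many extensions (the successive quotients $I^k/I^{k+1}$ are $R/I$-modules, and since $R/I$ is regular they are perfect $R/I$-modules, hence built from $R/I$); so $R/I$ generates all of $D^b_{coh,V(I)}(\Spec R)$ as a thick triangulated subcategory. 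Granting this, Theorem \ref{main-theo}, specialized to the affine scheme $X = \Spec R$ with $Y = \Spec(R/I)$, identifies $\widehat{D(R)}_{D^b_{coh,Y}(X)}$ with $D_{alg}(\widehat{X}_Y) \cong D(\widehat{R}_I)$ (using the Remark that in the affine case $D_{alg}(\widehat{X}_Y) \cong D(\widehat{R}_I)$ with $\widehat{R}_I = \lim_n R/I^n$), and tracing the compact generator $R/I$ through this chain of equivalences yields $\widehat{R}_{(R/I)} \cong \widehat{R}_I$.

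The main obstacle is the generation claim in the second case: showing that $R/I$ classically generates $D^b_{coh,V(I)}(\Spec R)$ when $R/I$ is regular. The subtlety is that $R$ itself may be singular, so we cannot invoke $D^b_{coh} = \Perf$ on $R$, and a priori $R/I$ might have a huge, or even infinite, projective dimension over $R$ — but this does not matter, since we are asking about the thick subcategory generated inside $D(R)$, not inside $\Perf(R)$, and any bounded coherent complex supported on $V(I)$ does lie in the thick subcategory generated by $R/I$ by the dévissage argument sketched above. One must be slightly careful that finite generation is preserved at each step and that the regularity of $R/I$ is genuinely used (it is, to conclude that $R/I$-modules like $I^k/I^{k+1}$ are perfect over $R/I$ and hence finitely built from $R/I$); this is where the hypothesis enters. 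Once the generation statement is in hand, everything else is a direct application of Theorem \ref{main-theo} and the Remark following the definition of $D_{alg}(\widehat{X}_Y)$.
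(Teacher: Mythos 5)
Your overall strategy matches the paper's: handle the case $R$ regular by a direct appeal to Corollary~\ref{regular}, and in the case $R/I$ regular, establish that $R/I$ classically generates $D^b_{coh,Y}(\Spec R)$ (where $Y=\Spec(R/I)$) and then invoke Theorem~\ref{main-theo} together with the affine-case Remark. The first case is fine. However, the d\'evissage argument in the second case, as you have written it, contains a genuine non-sequitur.

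You filter $\cO_{Y_n}=R/I^n$ by the $I$-adic filtration, note the subquotients $I^k/I^{k+1}$ are finitely generated $R/I$-modules, and conclude (correctly) that $R/I^n$ lies in the thick subcategory $\langle R/I\rangle\subset D(R)$. But you then want to deduce that an arbitrary $M\in \Coh_{V(I)}(\Spec R)$ lies in $\langle R/I\rangle$ from the facts that (a) $M$ is an $R/I^n$-module and (b) $R/I^n\in\langle R/I\rangle$. That inference is invalid: a module over an algebra $A$ need not be classically built from $A$ unless $A$ itself is regular, and $R/I^n$ is typically not. The filtration must be applied to $M$, not to $R/I^n$: if $I^nM=0$ then $M\supset IM\supset\cdots\supset I^nM=0$ has subquotients $I^kM/I^{k+1}M$ which are finitely generated $R/I$-modules; regularity of $R/I$ makes each one perfect over $R/I$, hence in the thick subcategory of $D(R/I)$ generated by $\cO_Y$, and $\iota_*$ then places it in $\langle R/I\rangle\subset D(R)$. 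Taking finitely many extensions and then building a bounded complex from its cohomology sheaves finishes the generation claim. The paper packages exactly this idea in two clean steps: $\iota_*\cO_Y$ generates $\iota_*\cF$ for every $\cF\in D^b_{coh}(Y)$ because $D^b_{coh}(Y)=\Perf(Y)$ (this is where regularity of $R/I$ enters), and these pushforwards generate all of $D^b_{coh,Y}(X)$ by the d\'evissage just described. Once you separate the two steps as the paper does, the argument becomes tight; as currently written yours leaps over the crucial step.
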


\begin{proof} If $R$ is regular, the isomorphism follows from Corollary \ref{regular}. Assume that $R/I$ is regular.

Put $X:=\Spec(R),$ and $Y:=\Spec(R/I)\subset X.$ We claim that $\iota_*\cO_Y$ is a generator of $D^b_{coh,Y}(X).$
Indeed, $\iota_*\cO_Y$ generates all objects $\iota_*\cF,$ $\cF\in D^b_{coh}(Y),$ which generate the whole subcategory $D^b_{coh,Y}(X).$

Therefore, the assertion follows from Theorem \ref{main-theo}.
\end{proof}

The following Proposition shows that in the Corollary \ref{surj_of_alg} one cannot drop the assumption of regularity.

\begin{prop}\label{infin_extensions}1) Let $R$ be some commutative algebra over a field $k,$ and $M$ an $R$-module. Denote by $\tilde{R}$ the split square-zero extension of
$R$ by $M.$ The following are equivalent:

(i) The map $\tilde{R}\to \widehat{\tilde{R}}_{R}$ is an isomorphism in $\Ho(\dgalg_k);$

(ii) The following are isomorphisms in $D(R):$
\begin{equation}\label{MtoM**}M\stackrel{\sim}{\to} M^{\vee\vee},\end{equation}
\begin{equation}\label{tensor_powers}(M^{\stackrel{\bL}{\otimes}n})^{\vee\vee}
\stackrel{\sim}{\to} ((M^\vee)^{\stackrel{\bL}{\otimes}n})^{\vee},\quad n\geq 2.\end{equation}
Here tensor products are over $R$ and $(-)^\vee=\bR\Hom_R(-,R).$

2) In particular, if $R=k[x]/(x^2)$ and $M=k,$ then the map $\tilde{R}\to \widehat{\tilde{R}}_{R}$
is not an isomorphism.\end{prop}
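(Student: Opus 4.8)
The plan is to compute the two derived centralizers making up $\widehat{\tilde{R}}_{R}=\bR\End_{\cB^{op}}(R)^{op}$, where $\cB:=\bR\End_{\tilde R}(R)$, by explicit (homotopy) bar resolutions, to reduce the question to a comparison of objects of $D(R)$, and then to match the outcome with \eqref{MtoM**}--\eqref{tensor_powers}. Throughout, $R$ is viewed as the right $\tilde R$-module $\tilde R/M$ with $M=\ker(\tilde R\to R)$, $M^2=0$, and $\iota\colon\tilde R\to\widehat{\tilde{R}}_{R}$ is the tautological morphism (the $\tilde R$-action on $R$ commutes with the $\cB^{op}$-action, so it is unchanged by further double centralization).

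First I would identify $\cB$. Applying $\bR\Hom_{\tilde R}(-,R)$ to the exact triangle $M\to\tilde R\to R$ in $D(\tilde R)$, and using that $M$ as a $\tilde R$-module equals $M\stackrel{\bL}{\otimes}_R(\tilde R/M)$ (with $\tilde R/M$ the $(R,\tilde R)$-bimodule $R$, flat on the left, so that $\bR\Hom_{\tilde R}(M,R)\cong\bR\Hom_R(M,\cB)$ by adjunction), one gets a triangle $\cB\to R\to\bR\Hom_R(M,\cB)$ whose first map is split by the DG-algebra unit $R\to\cB$. Hence $\cB\simeq R\oplus\bR\Hom_R(M,\cB)[-1]$; iterating and passing to the limit (which converges in each cohomological degree, since $M^{\stackrel{\bL}{\otimes}n}$ sits in degrees $\le 0$) gives a quasi-isomorphism of DG algebras
\[
\cB\;\simeq\;\prod_{n\ge 0}\bigl(M^{\stackrel{\bL}{\otimes}n}\bigr)^{\vee}[-n],
\]
the completed ``lax derived tensor algebra'' of $M^\vee[-1]$ over $R$, with product induced by the lax-monoidal maps $\bigl(M^{\stackrel{\bL}{\otimes}p}\bigr)^\vee\stackrel{\bL}{\otimes}_R\bigl(M^{\stackrel{\bL}{\otimes}q}\bigr)^\vee\to\bigl(M^{\stackrel{\bL}{\otimes}(p+q)}\bigr)^\vee$ of $(-)^\vee$. (Equivalently one builds the homotopy-coherent reduced bar resolution of $R$ over $\tilde R$, relative to the section $R\to\tilde R$, from an h-projective resolution $P\to M$ over $R$: because $M^2=0$ only the outer multiplication survives in the bar differential, and one computes $\End_{\tilde R}$ of this resolution.)

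Next I would run the analogous reduced bar (Koszul-dual) computation for the augmented $R$-algebra $\cB$ to obtain $\widehat{\tilde{R}}_{R}=\bR\End_{\cB^{op}}(R)^{op}$, and track $\iota$ through it. The effect is to dualize $\cB$ once more. In internal weight $1$ and homological degree $1$ the answer is $\bigl(M^\vee[-1]\bigr)^\vee[-1]\simeq M^{\vee\vee}$, and there $\iota$ is exactly the biduality map $M\to M^{\vee\vee}$ of \eqref{MtoM**}. The higher-weight contributions to the Koszul dual $\bR\End_{\cB^{op}}(R)$ -- which measure the failure of the lax tensor algebra $\cB$ to be a genuine completed tensor algebra -- must collapse for $\iota$ to be an isomorphism, and unwinding the bar differentials shows that, for each $n\ge 2$, their vanishing in weight $n$ is equivalent precisely to the isomorphism \eqref{tensor_powers}. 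Hence $\iota$ is an isomorphism in $\Ho(\dgalg_k)$ if and only if \eqref{MtoM**} and all of \eqref{tensor_powers} hold, which is part 1). The step I expect to be the main obstacle is exactly this second bar computation: keeping track simultaneously of the completion (product versus sum), the internal shifts $[-n]$, and the fact that the multiplication on $\cB$ is only lax (not strict) monoidal, and verifying that the higher-weight pieces of $\widehat{\tilde{R}}_{R}$ are governed by \eqref{tensor_powers} and nothing else.

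For part 2), take $R=k[x]/(x^2)$ and $M=R/(x)=k$. Since $R$ is a Frobenius algebra it is self-injective, so $M^\vee=\bR\Hom_R(k,R)=\operatorname{Ann}_R(x)=(x)\cong k$, and likewise $M^{\vee\vee}\cong k$ with the biduality map $M\to M^{\vee\vee}$ an isomorphism -- so \eqref{MtoM**} holds. On the other hand $\Tor^R_i(k,k)=k$ for all $i\ge 0$, so $M^{\stackrel{\bL}{\otimes}2}=k\stackrel{\bL}{\otimes}_R k\simeq\bigoplus_{i\ge 0}k[i]$ and $\bigl(M^{\stackrel{\bL}{\otimes}2}\bigr)^{\vee\vee}\simeq\bigoplus_{i\ge 0}k[i]$ sits in cohomological degrees $\le 0$; whereas $(M^\vee)^{\stackrel{\bL}{\otimes}2}\simeq k\stackrel{\bL}{\otimes}_R k\simeq\bigoplus_{i\ge 0}k[i]$, so $\bigl((M^\vee)^{\stackrel{\bL}{\otimes}2}\bigr)^{\vee}\simeq\bigoplus_{i\ge 0}k[-i]$ sits in cohomological degrees $\ge 0$. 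Both are unbounded, hence not quasi-isomorphic: \eqref{tensor_powers} fails already for $n=2$, and therefore, by part 1), the map $\tilde R\to\widehat{\tilde{R}}_{R}$ is not an isomorphism.
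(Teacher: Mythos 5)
Your proposal is correct and follows essentially the same path as the paper: both decompose $\widehat{\tilde R}_R$ by weight in $M$, identify the weight-$0$ and weight-$1$ pieces as $R$ and $M^{\vee\vee}$ with $\iota$ restricting to the biduality map in weight $1$, and reduce (i) to the acyclicity of the higher-weight pieces, which is then matched against \eqref{tensor_powers}; your computation in part 2) over $k[x]/(x^2)$ is the paper's verbatim. The formalisms differ somewhat: you obtain the weight decomposition of $\cB=\bR\End_{\tilde R}(R)$ by iterating the split triangle $M\to\tilde R\to R$ (giving the completed lax tensor algebra $\prod_{n\ge 0}(M^{\stackrel{\bL}{\otimes}n})^{\vee}[-n]$) and then view $\widehat{\tilde R}_R$ through a second, Koszul-dual bar construction; the paper instead works directly with the explicit $A_\infty$-module endomorphism complexes $\prod_n\Hom_k(N\otimes A^{\otimes n},N)[-n]$ and writes the weight components $C_n$, $D_n$ in closed form. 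The paper's explicitness pays off at the key step: it phrases the inductive claim as a concrete, checkable equivalence (acyclicity of $D_n$ for $2\le n\le m$ $\Longleftrightarrow$ \eqref{MtoM**} and \eqref{tensor_powers} up to weight $m$), whereas your ``unwinding the bar differentials shows'' gestures at the same induction without supplying a precise target. You rightly flag this as the main obstacle; the paper also leaves the induction to the reader, so the two treatments are at comparable rigor, but if you wanted to close the gap you would essentially be led to write down the paper's $D_n$.
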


\begin{proof} 1) Let $A$ be any DG algebra, and $N$ a DG $A$-module. Then we can treat $A$ as an $A_{\infty}$-algebra, and
$N$ as a right $A_{\infty}$-module over $A.$ Denote by $A\text{-mod}_{\infty}$ the DG category of right $A_{\infty}$-modules
over $A.$ We put
\begin{equation}B_N:=\End_{A\text{-mod}_{\infty}}(N)=\prod\limits_{n\geq 0}\Hom_k(N\otimes A^{\otimes n}, N)[-n].\end{equation}
Further, we have obvious projection morphism of DG algebras
$B_N\to \End_k(N),$ hence $N$ is naturally a DG module over $B_N^{op}.$ We put
\begin{equation}\widehat{A}_M:=(\End_{B_M^{op}\text{-mod}_{\infty}}(M))^{op}.\end{equation}
Then $\widehat{A}_M$ is a DG model for derived double centralizer of $M.$ We have a natural $A_{\infty}$-morphism $A\to\widehat{A}_M.$

Now put $A:=\tilde{R},$ and $N:=R.$ Then DG algebra $B_N$ as a complex can  be decomposed into the product of complexes:
\begin{multline}B_M=\prod\limits_{n\geq 0}C_n,\\ C_n:=\prod\limits_{l_1>0,l_2,\dots,l_{n+1}\geq 0}
\Hom_k(R^{\otimes l_1}\otimes M\otimes\dots\otimes M\otimes R^{\otimes l_{n+1}},R)[-l_1-\dots-l_{n+1}-n+1].\end{multline}

Further, the DG algebra $\widehat{\tilde{R}}_R$ as a complex can be decomposed into the product of complexes:
\begin{equation}\widehat{\tilde{R}}_R=\prod\limits_{n\geq 0}D_n,\quad D_n:=\prod_{\substack{m_1+\dots+m_l=n,\\
m_i\geq 0}} \Hom_k(C_{m_1}\otimes\dots\otimes C_{m_l}\otimes R,R)[-l].\end{equation}

It is straightforward to observe the following isomorphisms in $D(k):$
\begin{equation}D_0\cong R,\quad D_1\cong M^{\vee\vee}.\end{equation}

Thus, (i) holds iff the map \eqref{MtoM**} is an isomorphism, and all the complexes $D_n,$ $n\geq 2,$ are acyclic. Further, it is straightforward
to show by
induction on $m\geq 2$ that the following are equivalent:

(iii) the map \eqref{MtoM**} and \eqref{tensor_powers} for $2\leq n\leq m$ are isomorphisms;

(iv)  the map \eqref{MtoM**} is an isomorphism and the complexes $D_n,$ $2\leq n\leq m,$ are acyclic.

This proves part 1) of Proposition.

2) We claim that in the case $R=k[x]/(x^2)$ and $M=k$ the map
\begin{equation}(M\stackrel{\bL}{\otimes}_R M)^{\vee\vee}\to (M^{\vee}\stackrel{\bL}{\otimes}_R M^{\vee})^{\vee}\end{equation}
is not an isomorphism. Indeed, we have isomorphisms in $D(R):$
\begin{equation}(M\stackrel{\bL}{\otimes}_R M)^{\vee\vee}\cong\bigoplus\limits_{n\geq 0}k[n],\quad (M^{\vee}\stackrel{\bL}{\otimes}_R M^{\vee})^{\vee}\cong \bigoplus\limits_{n\geq 0}k[-n].\end{equation}
Therefore, according to 1), the map $\tilde{R}\to \widehat{\tilde{R}}_{R}$
is not an isomorphism. Proposition is proved.
\end{proof}

\section{Beilinson-Parshin adeles and categorical formal completions.}
\label{sect_adeles}

Let $X$ be a separated Noetherian $k$-scheme of finite Crull dimension $d.$

We first recall reduced Beilinson-Parshin adeles of $X$ \cite{Be, P}. Denote by $P(X)$ the set of all schematic points of $X.$ Put
\begin{equation}S(X)_p^{red}:=\{(\eta_0,\dots,\eta_p):\eta_i\in P(X),\eta_i\ne\eta_{i-1},\eta_i\in\overline{\eta_{i-1}}\text{ for }1\leq i\leq p\}\end{equation}
For $T\subset S(X)_p,$ $p>0,$ and $\eta\in P(X),$ put
\begin{equation}T_{\eta}:=\{(\eta_1,\dots,\eta_p)\in S(X)_{p-1}:(\eta,\eta_1,\dots,\eta_p)\in T\}\subset S(X)_{p-1}.\end{equation}
Also, for $\eta\in P(X),$ denote by $j_{\eta}:\Spec(\cO_{\eta})\to X$ the natural map. Denote by $\m_{\eta}\subset\cO_{\eta}$ the unique maximal ideal.

For each subset $T\subset S(X)_p,$ $0\leq p\leq d=\dim X,$ we will define a functor
\begin{equation}\label{adeles}\A_T(X,-):\QCoh(X)\to k\text{-Mod},\end{equation}
exact and commuting with infinite direct sums (hence commuting with small colimits). Since each quasi-coherent sheaf is a union of its coherent subsheaves, it suffices to define the functor $\A_T(X,-)$ for coherent sheaves.

We define these functors by induction on $p.$ For $p=0,$ $T\subset S(X)_0=P(X),$ and $\cF\in\Coh(X),$ we put
\begin{equation}\A_T(X,\cF):=\prod_{\substack{\eta\in T}}\widehat{\cF}_{\eta}.\end{equation}
With above said, this defines uniquely the functor \eqref{adeles} for $p=0.$ It is easy to check that it is exact and commutes with small colimits.

Now let $T\subset S(X)_p,$ $p>0.$ Suppose that all the functors $\A_{T_{\eta}}(X,-)$ are already defined. For $\cF\in \Coh(X),$ put
\begin{equation}\A_T(X,\cF):=\prod_{\substack{\eta\in P(X)}}\lim_n\A_{T_{\eta}}(j_{\eta*}(\cF_{\eta}/\m_{\eta}^n)).\end{equation}
This defines uniquely the functor \eqref{adeles} for all $T\subset P(X),$ $p>0,$ and by induction we see that $\A_T(X,-)$ is exact and commutes with small colimits.

For $d\geq k_0>\dots>k_p\geq 0,$ we put
\begin{equation}S(X)_{(k_0,\dots,k_p)}:=\{(\eta_0,\dots,\eta_p)\in S(X)_p:\dim\overline{\eta_i}=k_i\text{ for }0\leq i\leq p\}.\end{equation}
Further, put
\begin{equation}\A(X,-)_p:=\A_{S(X)_p}(X,-),\quad\A(X,-)_{(k_0,\dots,k_p)}:=\A_{S(X)_{(k_0,\dots,k_p)}}(X,-).\end{equation}
Clearly, we have
\begin{equation}\A(X,-)_p\cong\prod_{\substack{d\geq k_0>\dots>k_p\geq 0}}\A(X,-)_{(k_0,\dots,k_p)}.\end{equation}

It is easy to see that for all $T\subset S(X)_p,$ the $k$-module $\A_T(X,\cO_X)$ is naturally a commutative $k$-algebra. Further, for all quasi-coherent $\cF$
the $k$-module $\A_T(X,\cF)$ is naturally an $\A_T(X,\cO_X)$-module. For convenience, we put
\begin{equation}\A(X)_p:=\A(X,\cO_X)_p,\quad\A(X)_{(k_0,\dots,k_p)}:=\A(X,\cO_X)_{(k_0,\dots,k_p)}.\end{equation}
To formulate main result of this section, we would like to use the following notation. If $F:\cT_1\to \cT_2$ is a functor between compactly generated
triangulated categories,
and $S\subset \cT_1$ is an essentially small Karoubian complete triangulated subcategory (resp. localizing subcategory), then we put
\begin{equation}\widehat{\cT_2}_S:=\widehat{\cT_2}_{\langle F(S)\rangle},\quad(\text{ rep. }\cT_2/S:=\cT_2/\langle F(S)\rangle),\end{equation}
where $\langle F(S)\rangle$ is subcategory classically generated by $F(S)$ (resp. smallest localizing subcategory containing $F(S)$).

Denote by $D^b_{coh,\leq p}(X)\subset D^b_{coh}(X)$ the full subcategory consisting of complexes, for which the dimension of support of cohomology is not greater than $p.$ Further, Denote by $D_{\leq p}(X)\subset D(X)$ the smallest localizing subcategory, which contains $D^b_{coh,\leq k}(X).$ It is
clear that $D_{\leq k}(X)$ is compactly generated by $\Perf_{\leq k}(X)=\Perf(X)\cap D^b_{coh,\leq k}(X).$

\begin{theo}\label{relate_to_adeles}Let $X$ be a separated Noetherian $k$-scheme of dimension $d.$ Fix some sequence $d\geq k_0>\dots>k_p\geq 0.$ If $p=0,$ then we have a commutative diagram,
\begin{equation}
\label{commut3}
\begin{CD}
D(X) @>\id >> D(X)\\
@V\A(X,-)_{(k_0)}VV                              @VVV\\
D(\A(X)_{(k_0)}) @>\cong >> \widehat{(D(X)/D_{\leq(k_0-1)}(X))}_{D^b_{coh,\leq k_0}}.
\end{CD}
\end{equation}

 For $p>0,$ there is a natural commutative diagram
\begin{equation}
\label{commut4}
\begin{CD}
D(\A(X)_{(k_1,\dots,k_p)}) @>\id >> D(\A(X)_{(k_1,\dots,k_p)})\\
@VVV                              @VVV\\
D(\A(X)_{(k_0,\dots,k_p)}) @>\cong >> \widehat{(D(\A(X)_{(k_1,\dots,k_p)})/D_{\leq(k_0-1)}(X))}_{D^b_{coh,\leq k_0}}.
\end{CD}
\end{equation}
\end{theo}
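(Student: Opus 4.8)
The plan is to reduce Theorem~\ref{relate_to_adeles} to an application of Lemma~\ref{homot_limits}, exactly in the spirit of the proof of Theorem~\ref{main-theo}, but with the derived category of quasi-coherent sheaves replaced by the Drinfeld quotient $D(X)/D_{\leq(k_0-1)}(X)$ (and inductively by $D(\A(X)_{(k_1,\dots,k_p)})/D_{\leq(k_0-1)}(X)$). First I would treat the base case $p=0$. Fix a compact generator $\cF$ of $D(X)$; by the last Remark of Section~\ref{definition} its image $\bar{\cF}$ in the quotient $\cC:=D(X)/D_{\leq(k_0-1)}(X)$ is a compact generator, so $\cC\cong D(\cA)$ with $\cA=\bR\End_{\cC}(\bar{\cF})$. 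The subcategory $\cT\subset\cC$ to complete along is the image of $D^b_{coh,\leq k_0}(X)$; it is classically generated by (the image of) $\Perf_{\leq k_0}(X)$, because $D_{\leq k_0}(X)$ is compactly generated by $\Perf_{\leq k_0}(X)$ and passing to the quotient by $D_{\leq(k_0-1)}(X)$ only collapses a localizing subcategory generated by compacts. The diagram $\{\cA_x\}$ to feed into Lemma~\ref{homot_limits} will be the cosimplicial-type diagram of DG endomorphism algebras $\bR\End$ of the images of $\cF$ under the structure maps used to build the adele functor $\A_{S(X)_{(k_0)}}(X,-)$: concretely, for each point $\eta$ of dimension $k_0$ one takes $\holim_n \bR\End(\text{restriction of }\cF\text{ to }j_{\eta*}(\cF_\eta/\m_\eta^n))$, and then the product over such $\eta$.

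The two things that have to be checked to invoke Lemma~\ref{homot_limits} are: (a) each $\cA_x$ lies in $\cT$ as a right DG $\cA$-module; and (b) for every $E\in\cT$ the canonical map $\hocolim \bR\Hom_{\cA}(\cA_x,E)\to E$ is an isomorphism. For (a), observe that $j_{\eta*}(\cF_\eta/\m_\eta^n)$ is supported at $\eta$, hence its support has dimension $\le k_0$, so it lies in $D^b_{coh,\leq k_0}(X)$ and survives to $\cT$ (it is killed in the quotient only if it lay in $D_{\leq(k_0-1)}(X)$, which it does not in general — but it is enough that it lies in $\cT$, which it does). For (b), the key input is that the adele resolution is a resolution: for a coherent sheaf $\cG$ supported in dimension $\le k_0$, the complex obtained from the dimension-$k_0$ localized completions $\prod_\eta \lim_n \widehat{(\cdot)}$ together with the already-constructed lower adele functors computes $\bR\Gamma$ of $\cG$; this is the content of the Beilinson–Parshin flasque resolution. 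Dualizing via $\bR\cHom(-,E)$ and applying $\bR\Gamma$ — which commutes with the relevant homotopy colimits since $\bR\Gamma$ commutes with direct sums on a Noetherian scheme — gives the desired $\hocolim$ statement, just as in the proof of Theorem~\ref{main-theo} where the analogous fact for infinitesimal neighborhoods came from Grothendieck's theorem on formal functions. Granting (a) and (b), Lemma~\ref{homot_limits} yields $\widehat{\cC}_{\cT}\cong D(\holim\cA_x)$, and one identifies $\holim\cA_x$ with $\End$ of $\A(X,\cF)_{(k_0)}$ as an $\A(X)_{(k_0)}$-algebra, whence $\widehat{\cC}_{\cT}\cong D(\A(X)_{(k_0)})$; commutativity of \eqref{commut3} is then a routine check that $\bL\kappa^*$ corresponds to $\A(X,-)_{(k_0)}$ under these identifications.

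For $p>0$ I would argue by induction on $p$, now starting from the category $\cC':=D(\A(X)_{(k_1,\dots,k_p)})$ in place of $D(X)$. The point is that the adele functor $\A(X,-)_{(k_0,\dots,k_p)}$ factors as: first apply $\A(X,-)_{(k_1,\dots,k_p)}$, then perform the dimension-$k_0$ adele construction "relative to" the resulting module, which is exactly iterating the $p=0$ construction one more time. Formally, $\cC'$ is again compactly generated (by the image of $\cF$), the relevant subcategory is the image of $D^b_{coh,\leq k_0}(X)$ under the composite functor, and the diagram $\{\cA_x\}$ is built the same way using $\lim_n$ over $\m_\eta$-adic neighborhoods for $\eta$ of dimension $k_0$ but now with $\bR\End$ computed in $\cC'$. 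The hypotheses of Lemma~\ref{homot_limits} reduce, after applying the already-constructed exact functor $\A(X,-)_{(k_1,\dots,k_p)}$ and using that it commutes with colimits and with $\lim_n$ applied to the pro-system of infinitesimal neighborhoods (again by finiteness/Noetherian hypotheses), to the same Beilinson–Parshin resolution statement used in the base case. Applying Lemma~\ref{homot_limits} gives the lower equivalence of \eqref{commut4}, and commutativity follows from tracing through the identifications.

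The main obstacle I anticipate is verifying condition (b) — the statement that the dimension-stratified adele complex is a resolution computing cohomology — in a form precise enough to produce the required $\hocolim$ isomorphism in $D(k)$, including correctly handling the fact that $\A_T(X,-)$ is defined by a messy inductive $\lim_n$-and-product recipe rather than as a single derived functor. In particular one must be careful that $\lim_n$ (rather than $\holim_n$) appearing in the definition of the adele functors does not introduce $\lim^1$ obstructions: this is fine because the pro-systems $\{\cF_\eta/\m_\eta^n\}$ are surjective (Mittag–Leffler), so the naive inverse limit agrees with the homotopy limit, but this needs to be said. A secondary, more bookkeeping-heavy point is setting up the DG enhancements and the compatible system of morphisms $\cA\to\cA_x$ (the index category $I$ here is a product of $\N^{op}$'s over the finite set of relevant points, or rather the appropriate diagram category encoding the nested adele construction) so that everything is genuinely functorial on the nose before passing to $\Ho(\dgalg_k)$; this is analogous to the enhancement choices made just before Theorem~\ref{main-theo} and should go through by the same arguments.
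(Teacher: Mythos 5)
Your overall strategy---reduce to Lemma~\ref{homot_limits} in the spirit of Theorem~\ref{main-theo}, now inside the Drinfeld quotient---is exactly right, but you are missing the structural step that makes the argument work, and you attempt to paper over it by invoking heavier machinery (the Beilinson--Parshin flasque resolution) that the paper does not actually use.

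The key step you omit is the orthogonal decomposition of the subcategory $\cT$ indexed by the points $\eta\in S(X)_{(k_0)}$. After passing to $D(X)/D_{\leq(k_0-1)}(X)$, the functor $\cF\mapsto\bigoplus_{\eta\in S(X)_{(k_0)}}j_{\eta*}(\cF_\eta)$ becomes the projection onto the right orthogonal of $D_{\leq(k_0-1)}(X)$, and the subcategories $\cT_\eta$ classically generated by (images of) $D^b_{coh,\overline{\eta}}(X)$ are mutually orthogonal in the quotient. These two facts (the paper's Lemma~\ref{description}) together with Proposition~\ref{product_gen} give $\widehat{\cA}_{\cT}\cong\prod_\eta\widehat{\cA}_{\cT_\eta}$ \emph{before} any homotopy limit is taken; only then is Lemma~\ref{homot_limits} applied, separately for each $\eta$ with the bare index category $\N$, using the Grothendieck formal-functions theorem for the infinitesimal neighborhoods of a single $\overline{\eta}$. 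You instead propose one big application of Lemma~\ref{homot_limits} over an index category you describe as ``a product of $\N^{op}$'s over the finite set of relevant points''; besides being vague (one needs a coproduct, not a product, of copies of $\N$ to get a product of algebras after $\holim$), the set $S(X)_{(k_0)}$ is typically infinite, and without the orthogonality lemma you have no way to verify condition (b) of Lemma~\ref{homot_limits}: the required $\hocolim$ isomorphism decomposes pointwise only because of the orthogonal splitting in the quotient. Appealing to the global flasque-resolution property of adeles to substitute for this is both circular-looking and strictly heavier than what is needed---the orthogonal decomposition reduces everything to a one-point (formal-completion) statement, which is precisely Grothendieck's theorem again.

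For $p>0$ your inductive outline is plausible, but it silently skips the genuinely new technical content: the analogue of the orthogonality lemma in the relative setting of $D(\A(X)_{(k_1,\dots,k_p)})$ (the paper's Lemma~\ref{description2}), and the preservation statement for the right adjoint $\Phi$ of the adele functor (the paper's Lemma~\ref{phi_preserves}), which is needed to turn $\hocolim_n\bR\Hom(\iota_{n*}\cO_{Y_n},-)$ into $\bR\Hom(\cO_X,-)$ in the relative category. These are not ``routine bookkeeping''; they require the adjunction identities $\A(X,j_{\eta*}(-))_{(k_1,\dots,k_p)}\cong g_{\eta*}(\A(X,j_{\eta*}\cO_\eta)_{(k_1,\dots,k_p)}\overset{\bL}{\otimes}_{\cO_\eta}-)$ and its projection-formula companion, which you do not mention. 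The Mittag--Leffler point you raise about $\lim_n$ versus $\holim_n$ is correct and worth stating, but it is a minor issue compared to the missing orthogonal decomposition, which is where the proof actually turns.
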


\begin{proof}First we prove \eqref{commut3}.

\begin{lemma}\label{description}1) The functor
\begin{equation}\label{right_adj}D(X)_{\leq k_0}\to D(X)_{\leq k_0},\quad\cF\mapsto\bigoplus\limits_{\eta\in S(X)_{(k_0)}}j_{\eta*}(\cF_{\eta}),\end{equation}
is the projection onto the right orthogonal to $D_{\leq(k_0-1)}(X).$ In particular, it induces a fully faithful embedding of $D(X)_{\leq k_0}/D_{\leq(k_0-1)}(X)$ into $D(X).$

2) The images of $\cF\in D^b_{coh,\overline{\eta_0}}(X)$ in $D(X)/D_{\leq(k_0-1)}(X),$ where $\eta_0\in S(X)_{(k_0)},$ generate the essential image of $D^b_{coh,\leq k_0}(X).$

3) If $\eta_1,\eta_2\in S(X)_{(k_0)},$ $\eta_1\ne \eta_2,$ and $\cF_i\in D^b_{coh,\overline{\eta_i}}(X)$ for $i=1,2,$ then we have
\begin{equation}\Hom_{D(X)/D_{\leq(k_0-1)}(X)}(\cF_1,\cF_2)=0.\end{equation}\end{lemma}

\begin{proof}1) By adjunction, for any $\eta\in S(X)_{(k_0)}$ and $\cF\in D(X)_{\leq k_0}$ we have that $j_{\eta*}(\cF_{\eta})$ is
 right orthogonal to $D(X)_{\leq(k_0-1)}.$ It remains to note that the cone of the natural morphism
 \begin{equation}\cF\to \bigoplus\limits_{\eta\in S(X)_{(k_0)}}j_{\eta*}(\cF_{\eta}),\quad\cF\in D(X)_{\leq k_0},\end{equation}
 lies in $D_{\leq(k_0-1)}(X).$

2) We have that $D^b_{coh,\leq k_0}(X)$ is itself generated by $D^b_{coh,\overline{\eta_0}}(X),$ $\eta_0\in S(X)_{(k_0)}.$ This implies 2).

3) follow from 1) easily.\end{proof}

 Let $E\in\Perf(X)$ be a generator. Put $\cA:=\bR\Hom_{D(X)/D_{\leq (k_0-1)}(X)}(E,E).$ Let $\cT\subset D(\cA)$ (resp. $\cT_{\eta}\subset D(\cA)$, $\eta\in S(X)_{(k_0)}$) be the subcategory
classically generated by the image of $D^b_{coh,\leq k_0}(X)$ (resp. $D^b_{coh,\overline{\eta}}$). Then, by Lemma \ref{description} and Proposition \ref{product_gen} we have that
\begin{equation}\label{product}\widehat{\cA}_{\cT}\cong\prod\limits_{\eta\in S(X)_{(k_0)}}\widehat{\cA}_{\cT_{\eta}}.\end{equation}
Fix some $\eta\in S(X)_{(k_0)}$ and put $Y:=\overline{\eta}.$ We claim that $\widehat{\cA}_{\cT_{\eta}}\cong\bR\End_{\widehat{\cO}_{\eta}}(\widehat{E}_{\eta}).$ This can be shown as follows. Denote by $Y_l$ the infinitesimal neighborhoods of $Y,$ $\iota_l:Y_l\to X$ the inclusions, and $j_{\eta,l}:\Spec(\cO_{\eta}/\m_{\eta}^l)\to X$ natural morphisms. Put
$\cA_l:=\bR\End(\bL j_{\eta,l}^*E).$ Then $\{\cA_l\}_{l\in\N}\in\dgalg_k^{\N^{op}},$ and we have a compatible system of morphisms $\cA\to \cA_l.$ Further, there are isomorphisms
\begin{equation}\cA_l\cong \bR\Hom_{D(X)}(E,j_{\eta,l*}\bL j_{\eta,l}^*E)\cong\bR\Hom_{D(X)/D_{\leq(k_0-1)}(X)}(E,\iota_{l*}\bL\iota_l^*E)\text{ in }D(\cA).\end{equation}
For each $\cF\in D^b_{coh,Y}(X),$ we have the following chain of isomorphisms:
\begin{multline}\hocolim_l\bR\Hom_{\cA}(\cA_l,\bR\Hom_{D(X)/D_{\leq(k_0-1)}(X)}(E,\cF))\cong\\
\hocolim_l\bR\Hom_{D(X)/D_{\leq(k_0-1)}(X)}(\iota_{l*}\bL\iota_l^*E,\cF)\cong \hocolim_l\bR\Hom_{D(X)}(\iota_{l*}\bL\iota_l^*E,j_{\eta*}(\cF_{\eta}))\cong\\
\bR\Hom_{D(X)}(E,j_{\eta*}(\cF_{\eta}))\cong \bR\Hom_{D(X)/D_{\leq(k_0-1)}(X)}(E,\cF).\end{multline}
Hence, by Lemma \ref{homot_limits}, we have
\begin{equation}\widehat{\cA}_{\cT_{\eta}}\cong\holim_l\bR\End(\bL j_{\eta,l}^*E)\cong\bR\End_{\widehat{\cO}_{\eta}}(\widehat{E}_{\eta}).\end{equation}
According to \eqref{product}, we have that
\begin{equation}\widehat{\cA}_{\cT}\cong\prod\limits_{\eta\in S(X)_{(k_0)}}\bR\End_{\widehat{\cO}_{\eta}}(\widehat{E}_{\eta}).\end{equation}
Further, since $\widehat{E}_{\eta},\widehat{\cO}_{\eta}\in\Perf(\widehat{\cO}_{\eta})$ generate each other in uniformly bounded number of steps,
we have Morita equivalence
\begin{equation}D(\prod\limits_{\eta\in S(X)_{(k_0)}}\bR\End_{\widehat{\cO}_{\eta}}(\widehat{E}_{\eta}))\cong D(\prod\limits_{\eta\in S(X)_{(k_0)}}\widehat{\cO}_{\eta}).\end{equation}
Further, by definition, $\prod\limits_{\eta\in S(X)_{(k_0)}}\widehat{\cO}_{\eta}=\A(X)_{(k_0)}.$ Hence, we have equivalences
\begin{equation}\widehat{(D(X)/D_{\leq(k_0-1)}(X))}_{D^b_{coh,\leq k_0}}\cong D(\widehat{\cA}_{\cT})\cong D(\A(X)_{(k_0)}),\end{equation}
and commutativity of \eqref{commut3} is straightforward.

Now we prove \eqref{commut4}. We have the morphisms of algebras
\begin{equation}g_{\eta}:\A(X)_{(k_1,\dots,k_p)}\to \A(X,j_{\eta*}\cO_{\eta})_{(k_1,\dots,k_p)},\quad\eta\in S(X)_{(k_0)}.\end{equation}
Note that we have the following isomorphisms of functors
\begin{equation}\A(X,j_{\eta*}(-))_{(k_1,\dots,k_p)}\cong g_{\eta*}(\A(X,j_{\eta*}\cO_{\eta})_{(k_1,\dots,k_p)}\stackrel{\bL}{\otimes}_{\cO_{\eta}}-),\end{equation}
\begin{equation}\bL g_{\eta}^*\A(X,-)_{(k_1,\dots,k_p)}\cong \A(X,j_{\eta*}\cO_{\eta})_{(k_1,\dots,k_p)}\stackrel{\bL}{\otimes}_{\cO_{\eta}}\bL j_{\eta}^*(-).\end{equation}

\begin{lemma}\label{description2}1) If $\cF\in D_{\leq(k_0-1)}(X),$ $\cG\in D_{\leq k_0}(X),$ and $\eta\in S(X)_{(k_0)},$ then
\begin{equation}\Hom_{\A(X)_{(k_1,\dots,k_p)}}(\A(X,\cF)_{(k_1,\dots,k_p)},\A(X,j_{\eta*}\cG_{\eta})_{(k_1,\dots,k_p)})=0.\end{equation}

2) The objects $\A(X,\cF)_{(k_1,\dots,k_p)},$ where $\cF\in D^b_{coh,\overline{\eta_0}}(X),$ $\eta_0\in S(X)_{(k_0)},$ generate the essential image of $D^b_{coh,\leq k_0}(X)$ in $D(\A(X)_{(k_1,\dots,k_p)})/D_{\leq(k_0-1)}(X).$

3) If $\eta_1,\eta_2\in S(X)_{(k_0)},$ $\eta_1\ne\eta_2,$ and $\cF_i\in D^b_{coh,\overline{\eta_i}}(X)$ for $i=1,2,$ then we have
\begin{equation}\Hom_{D(\A(X)_{(k_1,\dots,k_p)})/D_{\leq(k_0-1)}(X)}(\A(X,\cF_1)_{(k_1,\dots,k_p)},\A(X,\cF_2)_{(k_1,\dots,k_p)})=0.\end{equation}\end{lemma}

\begin{proof}1) We have the following chain of isomorphisms
\begin{multline}\label{adjunctions}\Hom_{\A(X)_{(k_1,\dots,k_p)}}(\A(X,\cF)_{(k_1,\dots,k_p)},\A(X,j_{\eta*}\cG)_{(k_1,\dots,k_p)})\cong\\
\Hom_{\A(X)_{(k_1,\dots,k_p)}}(\A(X,\cF)_{(k_1,\dots,k_p)},g_{\eta*}(\A(X,j_{\eta*}\cO_{\eta})_{(k_1,\dots,k_p)}\stackrel{\bL}{\otimes}_{\cO_{\eta}}\cG_{\eta}))\cong\\
\Hom_{\A(X,j_{\eta*}\cO_{\eta})_{(k_1,\dots,k_p)}}(\bL g_{\eta}^*\A(X,\cF)_{(k_1,\dots,k_p)},\A(X,j_{\eta*}\cO_{\eta})_{(k_1,\dots,k_p)}\stackrel{\bL}{\otimes}_{\cO_{\eta}}\cG_{\eta})\cong\\
\Hom_{\A(X,j_{\eta*}\cO_{\eta})_{(k_1,\dots,k_p)}}(\A(X,j_{\eta*}\cO_{\eta})_{(k_1,\dots,k_p)}\stackrel{\bL}{\otimes}_{\cO_{\eta}}\bL j_{\eta}^*(\cF),\\
\A(X,j_{\eta*}\cO_{\eta})_{(k_1,\dots,k_p)}\stackrel{\bL}{\otimes}_{\cO_{\eta}}\cG_{\eta})=0,\end{multline}
since $\bL j_{\eta}^*(\cF)=0.$

2) This is evident, as in Lemma \ref{description} 2).

3) Using 1) and the chain \eqref{adjunctions}, we see that
\begin{multline}\Hom_{D(\A(X)_{(k_1,\dots,k_p)})/D_{\leq(k_0-1)}(X)}(\A(X,\cF_1)_{(k_1,\dots,k_p)},\A(X,\cF_2)_{(k_1,\dots,k_p)})\cong\\
\Hom_{\A(X)_{(k_1,\dots,k_p)}}(\A(X,\cF_1)_{(k_1,\dots,k_p)},\A(X,j_{\eta_2*}\cF_{2\eta_2})_{(k_1,\dots,k_p)})\cong\\
\Hom_{\A(X,j_{\eta_2*}\cO_{\eta_2})_{(k_1,\dots,k_p)}}(\A(X,j_{\eta_2*}\cO_{\eta_2})_{(k_1,\dots,k_p)}\stackrel{\bL}{\otimes}_{\cO_{\eta_2}}\bL j_{\eta_2}^*(\cF_1),\\
\A(X,j_{\eta_2*}\cO_{\eta_2})_{(k_1,\dots,k_p)}\stackrel{\bL}{\otimes}_{\cO_{\eta_2}}\cF_{2\eta_2})=0,
\end{multline}
since $\bL j_{\eta_2}^*(\cF_1)=0.$
\end{proof}

For convenience put $\cB:=\bR\End_{D(\A(X)_{(k_1,\dots,k_p)})/D_{\leq(k_0-1)}(X)}(\A(X)_{(k_1,\dots,k_p)}).$ Let $\cT\subset D(\cB)$ (resp. $\cT_{\eta}\subset D(\cB)$, $\eta\in S(X)_{(k_0)}$) be the subcategory
classically generated by the image of $D^b_{coh,\leq k_0}(X)$ (resp. $D^b_{coh,\overline{\eta}}$). Then, by Lemma \ref{description2} and
Proposition \ref{product_gen} we have that
\begin{equation}\label{product2}\widehat{\cB}_{\cT}\cong\prod\limits_{\eta\in S(X)_{(k_0)}}\widehat{\cB}_{\cT_{\eta}}.\end{equation}

Fix some $\eta\in S(X)_{(k_0)}.$ We claim that $\widehat{\cB}_{\cT_{\eta}}\cong \lim_n\A(X,j_{\eta*}(\cO_{\eta}/\m_{\eta}^n))_{(k_1,\dots,k_p)}.$
This can be shown as follows. Put $\cB_n:=\A(X,j_{\eta*}(\cO_{\eta}/\m_{\eta}^n))_{(k_1,\dots,k_p)}.$ Then $\{\cB_n\}_{n\in\N}\in\dgalg_k^{\N^{op}},$
 and we have a compatible system of morphisms $\cB\to\cB_n.$ Denote by $g_{\eta,n}:\A(X)_{(k_1,\dots,k_p)}\to\cB_n$ the natural map.
Put $Y:=\overline{\eta}.$
Denote by $Y_l$ the infinitesimal neighborhoods of $Y,$ and $\iota_l:Y_l\to X$ the inclusions.
We have natural isomorphisms
\begin{multline}\cB_n\cong \bR\Hom_{\A(X)_{(k_1,\dots,k_p)}}(\A(X)_{(k_1,\dots,k_p)},g_{\eta,n*}(\A(X,j_{\eta*}(\cO_{\eta}/\m_{\eta}^n))_{(k_1,\dots,k_p)}))\cong\\
\bR\Hom_{D(\A(X)_{(k_1,\dots,k_p)})/D_{\leq(k_0-1)}(X)}(\A(X)_{(k_1,\dots,k_p)},g_{\eta,n*}(\A(X,j_{\eta*}(\cO_{\eta}/\m_{\eta}^n))_{(k_1,\dots,k_p)}))\cong\\
\bR\Hom_{D(\A(X)_{(k_1,\dots,k_p)})/D_{\leq(k_0-1)}(X)}(\A(X)_{(k_1,\dots,k_p)},\A(X,\iota_{n*}\cO_{Y_n})_{(k_1,\dots,k_p)})
\text{ in }D(\cB).\end{multline}

Further, denote by
\begin{equation}\Phi:D(\A(X)_{(k_1,\dots,k_p)})\to D(X)\end{equation}
the functor which is right adjoint to $\A(X,-)_{(k_1,\dots,k_p)}.$

\begin{lemma}\label{phi_preserves}Let $\cF\in D^b_{coh,Y}(X).$ Then
\begin{equation}\Phi(\A(X,j_{\eta*}\cF_{\eta})_{(k_1,\dots,k_p)})\in D_Y(X).\end{equation}\end{lemma}

\begin{proof}We may assume that $\cF=\iota_{1*}\cF'$ for some object $\cF'\in D^b_{coh}(Y).$ Denote by
$\pi:\A(X)_{(k_1,\dots,k_p)}\to \A(Y)_{(k_1,\dots,k_p)}$ the natural projection, and let
\begin{equation}\Psi:D(\A(Y)_{(k_1,\dots,k_p)})\to D(Y)\end{equation}
be right adjoint to $\A(Y,-)_{(k_1,\dots,k_p)}.$ Note the isomorphism of functors
\begin{equation}\bL\pi^*\A(X,-)_{(k_1,\dots,k_p)}\cong\A(Y,\bL\iota_1^*(-))_{(k_1,\dots,k_p)}.\end{equation}
We have the following chain of isomorphisms:
\begin{multline}\Phi(\A(X,j_{\eta*}\cF_{\eta})_{(k_1,\dots,k_p)})\cong \Phi(\pi_*(\A(Y,\cF'\otimes k(\eta))_{(k_1,\dots,k_p)}))\cong\\
 \iota_{1*}\Psi(\A(Y,\cF'\otimes k(\eta))_{(k_1,\dots,k_p)})\in D_Y(X).\end{multline}
 This proves Lemma.\end{proof}

Now, for each object $\cF\in D^b_{coh,Y}(X)$ we have
\begin{multline}\hocolim_n \bR\Hom_{\cB}(\cB_n,\\
\bR\Hom_{D(\A(X)_{(k_1,\dots,k_p)})/D_{\leq(k_0-1)}(X)}(\A(X)_{(k_1,\dots,k_p)},
\A(X,\cF)_{(k_1,\dots,k_p)}))\cong\\
\hocolim_n \bR\Hom_{D(\A(X)_{(k_1,\dots,k_p)})/D_{\leq(k_0-1)}(X)}(\A(X,\iota_{n*}\cO_{Y_n})_{(k_1,\dots,k_p)},
\A(X,\cF)_{(k_1,\dots,k_p)})\cong\\
\hocolim_n \bR\Hom_{\A(X)_{(k_1,\dots,k_p)}}(\A(X,\iota_{n*}\cO_{Y_n})_{(k_1,\dots,k_p)},
\A(X,j_{\eta*}(\cF_{\eta}))_{(k_1,\dots,k_p)})\cong\\
\hocolim_n \bR\Hom_{D(X)}(\iota_{n*}\cO_{Y_n},
\Phi(\A(X,j_{\eta*}(\cF_{\eta}))_{(k_1,\dots,k_p)})).\end{multline}
By Lemma \ref{phi_preserves}, the last object of $D(k)$ is isomorphic to
\begin{multline}\bR\Hom_{D(X)}(\cO_X,
\Phi(\A(X,j_{\eta*}(\cF_{\eta}))_{(k_1,\dots,k_p)}))\cong\\
\bR\Hom_{\A(X)_{(k_1,\dots,k_p)}}(\A(X)_{(k_1,\dots,k_p)},
\A(X,j_{\eta*}(\cF_{\eta}))_{(k_1,\dots,k_p)})\cong\\
\bR\Hom_{D(\A(X)_{(k_1,\dots,k_p)})/D_{\leq(k_0-1)}(X)}(\A(X)_{(k_1,\dots,k_p)},
\A(X,\cF)_{(k_1,\dots,k_p)}).
\end{multline}

Therefore, by Lemma \ref{homot_limits}, we have that
\begin{equation}\widehat{\cB}_{\cT_{\eta}}\cong\holim_n\cB_n\cong\lim_n\A(X,j_{\eta*}(\cO_{\eta}/\m_{\eta}^n))_{(k_1,\dots,k_p)}.\end{equation}
According to \eqref{product2}, we have
\begin{equation}\widehat{\cB}_{\cT}\cong\prod\limits_{\eta\in S(X)_{(k_0)}}\lim_n\A(X,j_{\eta*}(\cO_{\eta}/\m_{\eta}^n))_{(k_1,\dots,k_p)}\cong \A(X)_{(k_0,\dots,k_p)}.\end{equation}
Hence, we have equivalences
\begin{equation}\widehat{(D(\A(X)_{(k_1,\dots,k_p)})/D_{\leq(k_0-1)}(X))}_{D^b_{coh,\leq k_0}}\cong D(\widehat{\cB}_{\cT})\cong D(\A(X)_{(k_0,\dots,k_p)}),\end{equation}
and commutativity of \eqref{commut4} is straightforward to check.
\end{proof}

\end{document}